\newcolumntype{C}{>{$}c<{$}} 
\newcommand{\cd}{\mathrm{cd}}
\newcommand{\mb}{\mathbb}
\newcommand{\Z}{\mb Z}
\renewcommand{\phi}{\varphi}
\renewcommand{\epsilon}{\varepsilon}
\renewcommand{\bar}{\overline}
\newcommand\normal{\trianglelefteq}
\DeclareMathOperator{\lk}{lk}
\DeclareMathOperator{\st}{st}
\newtheorem{theorem}{Theorem}[section]
\newtheorem{definition}[theorem]{Definition}
\newtheorem{proposition}[theorem]{Proposition}
\newtheorem{corollary}[theorem]{Corollary}
\newtheorem{lemma}[theorem]{Lemma}
\newtheorem{example}[theorem]{Example}
\newtheorem{notation}[theorem]{Notation}
\newtheorem{conjecture}[theorem]{Conjecture}
\newtheorem{claim}[theorem]{Claim}
\newtheorem*{theorem*}{Theorem}
\newtheorem*{corollary*}{Corollary}
\begin{document}

\title[Around subgroups of Artin groups]{Around subgroups of Artin groups: derived subgroups and acylindrical hyperbolicity in the even FC-case.}

\author[J. Lopez de Gamiz Zearra]{Jone Lopez de Gamiz Zearra}
\address{Facultad de Economía y Empresa, Bilbao-Sarriko\\
Universidad del Pa\'is Vasco EHU/UPV\\
48015 Bilbao, Spain}
\email{jone.lopezdegamiz@ehu.eus}

\author[C.~Mart\'inez-P\'erez]{Conchita Mart\'inez-P\'erez}
\address{Departamento de Matem\'ticas\\
        Universidad de Zaragoza\\
         50009 Zaragoza, Spain}
\email{conmar@unizar.es}

\begin{abstract} We generalize to (certain) Artin groups some results previously known for right-angled Artin groups (RAAGs). First, we generalize a result  by  Droms, B. Servatius, and H. Servatius, and prove that the  derived subgroup of an Artin group is free if and only if the group is coherent. Second, coherent Artin groups over non complete graphs split as free amalgamated products along free abelian subgroups, and we extend to arbitrary Artin groups admitting such a splitting a recent result by Casals-Ruiz and the first author on finitely generated normal subgroups of RAAGs. Finally, we use splittings of even Artin groups of FC-type to generalize results of Minasyan and Osin on acylindrical hyperbolicity of their subgroups.
\end{abstract}

\maketitle
\section{Introduction}

Artin or Artin-Tits groups are a fascinating and rather mysterious class of groups that were first introduced by Tits \cite{Tits} as an extension of braid and Coxeter groups. They began to gain fame thanks to the works of several mathematicians such as Brieskorn, Saito and Deligne (see, for instance, \cite{BrieskornSaito} and \cite{Deligne}), and there are some specific classes (spherical-type, FC-type, right-angled Artin groups) where significant progress has been made. However, there are still many interesting open questions of  algebraic or geometric nature about the general class of Artin groups. These open problems include basic features; for instance, whether they have solvable word problem, non-trivial centers or contain torsion elements.

Artin groups can be defined in terms of labeled finite simplicial graphs, i.e.  finite labeled  graphs with no double edges nor loops and where the labels are  positive integers greater than or equal to $2$. More explicitely,
given a finite labeled simplicial graph $\Gamma$, the Artin group corresponding to $\Gamma$ is defined as 
\[A_\Gamma= \langle v\in V \mid vu\buildrel{m_e}\over{\ldots}=uv\buildrel{m_e}\over\ldots,\quad e=\{u,v\}\in E \rangle,\]
were $V$ and $E$, respectively, denote the set of vertices and edges of $\Gamma$,  $m_e$ is the label of the edge $e$ and $vu\buildrel{m_e}\over\ldots$ is the alternating word of length $m_e$ on $v$ and $u$.

Right-angled Artin groups (RAAGs for short) are one of the most studied families of Artin groups. They were first introduced by Baudisch \cite{Baudisch} in the 1970's and further developed by Droms \cite{Droms}, \cite{Droms2}, \cite{Droms3}. They play an important role in geometric group theory for many reasons. Among them, because subgroups of RAAGs provide examples of groups with interesting finiteness properties, and also because thanks to the work of Agol and Wise we know that many relevant families of groups such as hyperbolic 3-manifold groups, limit groups or torsion one-relator groups are (virtually) subgroups of RAAGs. Moreover, RAAGs are also important because there are many characterizations available of their algebraic properties in terms of combinatorial properties of the graph. An example of this type of property is Droms' following result: a RAAG $A_\Gamma$ is coherent if and only if the graph $\Gamma$ is chordal (see Section \ref{sec:coherent}). This characterization was later used by Droms together with B. Servatius and H. Servatius \cite{DromsServatiusServatius} (see Theorem \ref{RAAGscoherence_commutator}) to show that coherent RAAGs are precisely those having a free derived subgroup. Recently, Gordon \cite{Gordon} and Wise \cite{Wise} have obtained a characterization of coherent Artin groups in terms of the graph (see Theorem \ref{Artincoherence_classification}). In our first main result, we extend to Artin groups the result by Droms, B. Servatius and H. Servatius and we show 

\begin{theorem*}[Theorem \ref{Artincoherence_commutator}]
Let $\Gamma$ be a labeled finite simplicial graph. The Artin group associated to $\Gamma$, $A_\Gamma$, is coherent if and only if its derived subgroup is free.      
\end{theorem*}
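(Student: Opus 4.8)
The plan is to route both implications through the combinatorial characterization of coherence supplied by Theorem \ref{Artincoherence_classification}: that result tells us that $A_\Gamma$ is coherent precisely when $\Gamma$ avoids a list of forbidden full subgraphs. It therefore suffices to prove that this forbidden-subgraph condition is itself equivalent to freeness of $A_\Gamma'$, and I would organize the argument along the two directions of that equivalence, in direct analogy with the right-angled case treated in Theorem \ref{RAAGscoherence_commutator} (where the forbidden subgraphs are the chordless cycles of length $\geq 4$). Throughout I would use that $A_\Gamma^{\mathrm{ab}} \cong \Z^n$ is free abelian and that only odd-labeled edges impose identifications in the abelianization.

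For the implication ``$A_\Gamma'$ free $\Rightarrow A_\Gamma$ coherent'' I would argue by contraposition. If $A_\Gamma$ is not coherent then $\Gamma$ contains a forbidden full subgraph $\Lambda$; by van der Lek's theorem the standard parabolic $A_\Lambda$ embeds in $A_\Gamma$, and since commutators in $A_\Lambda$ are commutators in $A_\Gamma$ we get $A_\Lambda' \leq A_\Gamma'$. As subgroups of free groups are free (Nielsen--Schreier), it is enough to show $A_\Lambda'$ is \emph{not} free for every forbidden $\Lambda$. The forbidden subgraphs split into two kinds. Those built from commuting (label-$2$) edges are right-angled, so the corresponding $A_\Lambda$ is a RAAG on a non-chordal graph and $A_\Lambda'$ is non-free by Theorem \ref{RAAGscoherence_commutator} itself; this disposes of the chordless-cycle family uniformly. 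For the remaining, genuinely non--right-angled obstructions (triangles with bad labels, and complete graphs carrying two disjoint higher labels, whose group contains $F_2 \times F_2$) I would verify non-freeness by hand, typically by exhibiting a subgroup $\Z^2 \leq A_\Lambda'$; for the square this is transparent since $A_\Lambda \cong F_2 \times F_2$ and $(F_2 \times F_2)' = F_2' \times F_2' \supseteq \Z^2$, and for the Euclidean-type triangles one uses the translation lattice of the associated affine Coxeter group, checking it lands in $A_\Lambda'$.

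The substantial direction is ``$A_\Gamma$ coherent $\Rightarrow A_\Gamma'$ free'', and this is where I expect the real work to lie. Assuming condition (C), I would iterate the splitting of coherent Artin groups over non-complete graphs (established elsewhere in the paper) to present $A_\Gamma$ as the fundamental group of a finite graph of groups in which every edge group is a free abelian parabolic $\Z^k$ and every vertex group is a coherent Artin group on a \emph{complete} graph (coherence of these vertex parabolics descending through the combinatorial criterion of Theorem \ref{Artincoherence_classification}). Two facts then finish the proof. First, a free abelian parabolic $A_K \cong \Z^k$ comes from a clique all of whose edges have label $2$, so its generators are pairwise non-identified in $A_\Gamma^{\mathrm{ab}} \cong \Z^n$; hence $A_K$ injects into the abelianization and $A_\Gamma' \cap gA_Kg^{-1} = 1$ for all $g$. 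Second, coherence forces each complete-graph vertex group into a restricted form, essentially $\Z^a$ or $\Z^a \times I_2(m)$ (two nonabelian dihedral factors would embed $F_2 \times F_2$ and violate coherence), and for these the derived subgroup, being $I_2(m)'$ or trivial, is free. Consequently $A_\Gamma'$ acts on the Bass--Serre tree of the decomposition with trivial edge stabilizers and free vertex stabilizers, so by Bass--Serre theory $A_\Gamma'$ is a free product of free groups, hence free.

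The main obstacle is the forward direction, specifically establishing the refined graph-of-groups decomposition with the two stated properties and pinning down the coherent complete-graph Artin groups as base cases; once these are in place the free action on the tree, and thus freeness of $A_\Gamma'$, is immediate. As a consistency check on the bookkeeping I would keep in mind the cohomological reformulation $A_\Gamma'$ free $\iff \cd A_\Gamma' \leq 1$ (via Stallings--Swan, using torsion-freeness of the relevant $A_\Gamma$), which should match the dimension drop produced by the trivial edge stabilizers.
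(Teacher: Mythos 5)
Your high-level architecture---routing both implications through Theorem \ref{Artincoherence_classification}---is indeed the paper's, but in each direction a load-bearing step is wrong or missing. In the direction ``derived subgroup free $\Rightarrow$ coherent'', your case analysis of the forbidden subgraphs breaks down at the cycles: condition (i) of Theorem \ref{Artincoherence_classification} forbids induced cycles of length at least $4$ with \emph{arbitrary} labels, so these obstructions are not right-angled in general (a square with all labels $3$, say), Theorem \ref{RAAGscoherence_commutator} says nothing about them, and they appear nowhere in your list of remaining cases. The same is true of the hyperbolic-type triangles such as $(3,3,4)$ or $(2,3,7)$, which are neither Euclidean (so the translation-lattice idea does not apply) nor spherical, and of the spherical triangles $(2,3,3)$, $(2,3,4)$, $(2,3,5)$. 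Your default tactic of exhibiting $\Z^2$ inside $[A_\Lambda,A_\Lambda]$ is also unsubstantiated in these cases: the visible copies of $\Z^2$ (center of a dihedral parabolic together with a non-central element) have image of rank at least one in the abelianization, so they meet the derived subgroup in rank at most one. The paper uses genuinely different tools here: for non-spherical triangles and for \emph{all} long cycles, these groups are $2$-dimensional, hence satisfy the $K(\pi,1)$-conjecture, so the standard presentation complex is aspherical with $|X|\leq|R|$, and a homological argument (Lemma \ref{lem:aspherical}) shows that $H_2$ of the derived subgroup is non-zero; the spherical triangles are handled by cohomological-dimension counting ($(2,3,3)$, $(2,3,5)$) and by citing the Sigma-invariant computation for $(2,3,4)$. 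Your proposal has no mechanism for any of these cases.

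In the direction ``coherent $\Rightarrow$ derived subgroup free'', your Bass--Serre skeleton matches the paper's (Proposition \ref{Artincoherent_decomposition}, base case Example \ref{ex:dihedral}, then Karrass--Solitar, Lemma \ref{freeamalgamated}), but the crucial claim is a non sequitur. That the edge group $A_K\cong\Z^k$ is an all-label-$2$ clique does \emph{not} imply its generators stay independent in $A_\Gamma^{\mathrm{ab}}$: identifications in the abelianization come from odd-labeled paths anywhere in $\Gamma$, possibly leaving $K$. For instance, if $u,w$ span a $2$-labeled edge and both are joined to a third vertex $v$ by $3$-labeled edges, then $uw^{-1}\in[A_\Gamma,A_\Gamma]\cap A_K\neq 1$. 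That graph is incoherent, and that is exactly the point: deducing $[A_\Gamma,A_\Gamma]\cap A_\Delta=1$ from coherence is a real argument, namely the paper's Lemma \ref{Artin_parabolic_commutator}, proved by a shortest-odd-path argument using chordality together with condition (ii). The same lemma is needed for your vertex groups: the vertex stabilizers of the action of $[A_\Gamma,A_\Gamma]$ on the tree are the intersections $[A_\Gamma,A_\Gamma]\cap gA_Vg^{-1}$, which you silently identify with conjugates of $[A_V,A_V]$; the paper's example immediately after Lemma \ref{Artin_parabolic_commutator} (two edges labeled $3$) shows that this equality can fail for parabolics even of coherent Artin groups, so it must be proved for the pieces of the decomposition. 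Once that lemma is in place, induction on $|V(\Gamma)|$ plus Lemma \ref{freeamalgamated} finishes exactly as you intend; what is missing from your proposal is precisely the paper's central technical lemma, at both of the places where it is used.
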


Before stating some other results proved in the article, we would like to point out that in the paper, by a \emph{subgraph} we will mean \emph{induced subgraph} (see Definition \ref{def:induced}).

One of the consequences of the characterization by Gordon and Wise is that coherent Artin groups whose defining graph is not complete can be decomposed as a free amalgamated product along a free abelian subgroup. We consider more general Artin groups admitting such a decomposition in terms of proper subgraphs and we get the following.

\begin{theorem*}[Theorem \ref{Artingroup_normal}]
Let $A_\Gamma$ be an Artin group that admits a decomposition
\[A_\Gamma=A_{\Gamma_1}\ast_{A_\Delta}A_{\Gamma_2},\]
where $\Gamma_1, \Gamma_2, \Delta$ are proper subgraphs, $\Delta= \Gamma_1 \cap \Gamma_2$, $A_\Delta$ is a free abelian group and let $N\trianglelefteq A_\Gamma$ be a non-trivial finitely generated normal subgroup. Then either
\begin{itemize}
    \item $A_\Gamma \slash N$ is virtually abelian; or
    \item $A_\Gamma$ splits as a direct product of standard parabolic subgroups
  $$A_\Gamma=A_S\times A_{\Gamma \setminus S},$$
where $S$ is a subgraph of $\Gamma$, $A_S$ is a free abelian group and $N\trianglelefteq A_S$. 
\end{itemize}    
\end{theorem*}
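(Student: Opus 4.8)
The plan is to study the action of $A_\Gamma$ on the Bass--Serre tree $T$ associated to the given decomposition $A_\Gamma=A_{\Gamma_1}\ast_{A_\Delta}A_{\Gamma_2}$, and to run a dichotomy on the restricted action of $N$. Since $\Gamma_1$ and $\Gamma_2$ are proper subgraphs, the amalgam is nondegenerate, so the $A_\Gamma$-action on $T$ is minimal and has no global fixed point (and, $T$ having more than two ends, $A_\Gamma$ fixes no end either). The two cases are ``$N$ is elliptic'' (it fixes a vertex) and ``$N$ is not elliptic,'' and I expect these to yield the second and first alternatives of the statement, respectively.

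Suppose first that $N$ is not elliptic. Since $A_\Gamma$ fixes no end, normality forbids $N$ from fixing an end, so $N$ contains a hyperbolic element and has a genuine minimal invariant subtree $T_N$ (the union of the axes of its hyperbolic elements). Because $T_N$ is unique and $N\trianglelefteq A_\Gamma$, every $g\in A_\Gamma$ satisfies $gT_N=T_N$; thus $T_N$ is $A_\Gamma$-invariant and, by minimality of the ambient action, $T_N=T$. Hence $N$ acts minimally on $T$, and as $N$ is finitely generated a standard argument (the convex hull of the finitely many translates $n_iv_0$ of a basepoint spans a connected $N$-invariant cocompact subtree, which must be all of $T$) shows that $N\backslash T$ is a finite graph. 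Writing $Q=A_\Gamma/N$, all edges of $T$ form a single $A_\Gamma$-orbit, so the number of edges of $N\backslash T$ equals $[A_\Gamma:NA_\Delta]$, which is therefore finite. Consequently the image $NA_\Delta/N\cong A_\Delta/(A_\Delta\cap N)$ has finite index in $Q$, and since $A_\Delta$ is free abelian this image is abelian; hence $Q$ is virtually abelian, giving the first alternative.

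Now suppose $N$ is elliptic. As $N$ is normal, fixing a vertex $v$ forces it to fix the whole orbit $A_\Gamma v$, and since any two vertices are joined by a geodesic fixed pointwise by $N$, the subgroup $N$ lies in the kernel of the action, i.e.\ in the normal core $\bigcap_{g}gA_\Delta g^{-1}$ of $A_\Delta$. In particular $N$ is a finitely generated subgroup of the free abelian group $A_\Delta$, hence free abelian. To promote this to the direct-product conclusion I would take $n\in N$, write it in $A_\Delta$ with support $\mathrm{supp}(n)$, and exploit normality: for every vertex $w$ of $\Gamma$ one has $wnw^{-1}\in N\subseteq A_\Delta$, and the parabolic/normal-form machinery for Artin groups should force each generator occurring in $\mathrm{supp}(n)$ to commute (with label $2$) with $w$. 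Taking $S$ to be the set of vertices adjacent to all others with label $2$, this gives $\mathrm{supp}(N)\subseteq S$, so $A_\Gamma=A_S\times A_{\Gamma\setminus S}$ with $A_S$ free abelian and $N\subseteq A_S$; nontriviality of $N$ makes $S$ nonempty, yielding the second alternative.

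I expect the elliptic case to be the main obstacle. The passage to the first alternative is soft once cocompactness is established, because the free abelian edge group immediately supplies a finite-index abelian subgroup of the quotient. By contrast, extracting the direct factor $A_S$ requires controlling conjugates of parabolic elements lying in $A_\Delta$, and here parabolic subgroups of general Artin groups behave subtly; rather than a tree-theoretic argument I would need to invoke (or prove separately) the relevant commutation and centralizer statements to guarantee that the support of $N$ consists of vertices commuting with all of $\Gamma$.
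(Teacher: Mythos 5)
Your overall strategy coincides with the paper's: act on the Bass--Serre tree of the given splitting and run the elliptic/non-elliptic dichotomy for $N$. Your non-elliptic case is correct and complete; it amounts to a hands-on proof of the fact the paper simply cites from Ratcliffe (either $NA_\Delta$ has finite index in $A_\Gamma$ or $N\leq A_\Delta$), and the passage from $[A_\Gamma:NA_\Delta]<\infty$ to virtual abelianness of $A_\Gamma/N$ is exactly as in the paper. Your elliptic case also starts correctly: normality of $N$ plus minimality of the ambient action forces $N$ into the kernel $K=\bigcap_{g\in A_\Gamma}A_\Delta^g$, so $N$ is finitely generated free abelian.

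The genuine gap is at the point you flag yourself: you never prove that $K$ (equivalently, that $N$) sits inside a standard parabolic $A_S$ with $A_\Gamma=A_S\times A_{\Gamma\setminus S}$ and $A_S$ free abelian. The assertion that $wnw^{-1}\in A_\Delta$ ``should force'' each generator in the support of $n$ to be joined to $w$ by a label-$2$ edge is precisely the content of the paper's Lemma \ref{Artingroup_kernel}, and it does not follow from any available normal-form or parabolic machinery: for general Artin groups (which is the setting of this theorem) no such machinery exists -- it is not even known in general that intersections of parabolic subgroups are parabolic. The paper's proof of this step is the real work of the section: one reduces to $\Gamma=\Delta\cup\{w\}$, splits off the evident direct factor to assume no vertex of $\Delta$ is joined to $w$ with label $2$, reduces further via an auxiliary amalgam $A\ast_C A_\Delta$ (with $A$ generated by $w$ and $\lk_\Delta(w)$) to the case where every vertex of $\Delta$ is joined to $w$ with label greater than $2$, and then invokes the Crisp--Paris theorem (Tits' conjecture: the squares of the standard generators generate a RAAG) to conclude that $C_1\cap C_1^{w^2}=1$ for $C_1$ generated by the squares of the vertices of $\Delta$, whence $A_\Delta\cap A_\Delta^{w^2}=1$ by a finite-index argument. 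This is also why one must conjugate by $w^2$ rather than $w$: Crisp--Paris only controls the subgroup generated by squares. Without an argument of this kind, the second alternative of Theorem \ref{Artingroup_normal} is not established, so the proposal is incomplete exactly where the paper's main technical lemma does its work.
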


This is a partial generalization of one of the main results in \cite{CasalsLopezdeGamiz} where the case for RAAGs is considered. One of the key properties that Casals-Ruiz and the first named author use to show that result is the existence of WPD elements and the weak malnormality of certain subgroups of RAAGs. 

In the last section, we turn to a more geometric perspective and consider the notion of  acylindrical hyperbolicity (see Section \ref{sec:acylindrical}). There are many results on the literature about acylindrical hyperbolicity of various Artin groups, and a summary can be found in \cite{CharneyMartinMorris}. In \cite{Haettel}, based on questions posted in \cite{CharneyMorris}, Haettel conjectures that if $A_\Gamma$ is an Artin group that does not decompose as a direct product of standard parabolic subgroups, then the quotient $A_\Gamma \slash Z(A_\Gamma)$ is acylindrically hyperbolic. Moreover, they  prove this conjecture for even Artin groups of FC-type. Here, we consider acylindrical hyperbolicity for subgroups of even Artin groups of FC-type.

Even Artin groups are Artin groups for which all the labels in the defining graph are even numbers. They obviously include RAAGs and share some of their nice properties. For example, as for RAAGs, subgroups associated to subgraphs (called standard parabolic subgroups) are retracts (see Subsetcion \ref{subsec:Artingroups}).
 
Even Artin groups of FC-type can be defined as even Artin groups with the condition that in any triangle in the defining graph there are always at least two edges labeled by $2$. They are even closer to RAAGs; for instance, Antol\'in and Foniqi \cite{AntolinFoniqi} have shown that, as in the class of RAAGs, an arbitrary intersection of parabolic subgroups in an even Artin group of FC-type is again parabolic, and the same property has been conjectured for arbitrary Artin groups (see \cite{CharneyMartinMorris}). Antol\'in and Foniqi's  result is essential for our next main theorem, which generalizes an analogous theorem for RAAGs by Minasyan and Osin \cite{MinasyanOsin}.

\begin{theorem*}[Theorem \ref{hyperbolic}]
    Let $G=A_\Gamma$ be an even Artin group of FC-type and $H\leq A_\Gamma$ a subgroup. Suppose that there is a vertex $w$ in $\Gamma$ such that $H$ is not a subgroup of $A_{\st_\Gamma(w)}^g$ or of $A_{\Gamma\setminus\{w\}}^g$ for any $g\in G$. Consider the decomposition of $A_\Gamma$ of the form
    \[ A_\Gamma=A_{\Gamma\setminus\{w\}}\ast_{A_{\lk_\Gamma(w)}}A_{\st_\Gamma(w)},\]
and let $T$ be the associated Bass-Serre tree. Then there is an element $h\in H$ acting hyperbolically on $T$.    
\end{theorem*}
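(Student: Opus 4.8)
The plan is to study the action of $H$ on the Bass--Serre tree $T$ and to eliminate every possibility for this action except the existence of a hyperbolic element. Recall that the vertices of $T$ are the cosets $gA_{\Gamma\setminus\{w\}}$ and $gA_{\st_\Gamma(w)}$ and its edges are the cosets $gA_{\lk_\Gamma(w)}$, so that the vertex stabilizers are exactly the conjugates of the two factors $A_{\Gamma\setminus\{w\}}$ and $A_{\st_\Gamma(w)}$, while the edge stabilizers are the conjugates of $A_{\lk_\Gamma(w)}$. By Bass--Serre theory an element of $A_\Gamma$ is elliptic (fixes a point of $T$) precisely when it is conjugate into one of the two factors, and acts hyperbolically otherwise. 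The first step is to translate the hypothesis: a subgroup fixes a vertex of $T$ if and only if it is contained in some conjugate $A_{\Gamma\setminus\{w\}}^{g}$ or in some conjugate $A_{\st_\Gamma(w)}^{g}$, so the hypothesis on $H$ says exactly that $H$ fixes no vertex of $T$.

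Next I would invoke the classification of subgroups acting on a tree. If two elliptic elements $h_1,h_2\in H$ have disjoint fixed-point sets, then $h_1h_2$ is hyperbolic and we are finished; so I may assume no such pair exists and, in particular, that $H$ contains no hyperbolic element, i.e. every element of $H$ is elliptic. Since the fixed-point set of an elliptic isometry is a subtree, and any finitely many pairwise-intersecting subtrees of a tree have a common point (the Helly property of trees), every finitely generated subgroup of $H$ then fixes a vertex. As $H$ fixes no vertex, it cannot be finitely generated, and the classical dichotomy for groups all of whose elements are elliptic forces $H$ to fix a unique end $\xi$ of $T$.

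It remains to derive a contradiction from the existence of such a fixed end, and this is the main obstacle. Fixing a ray $(v_0,v_1,v_2,\dots)$ converging to $\xi$, each $h\in H$ is elliptic and fixes $\xi$, hence fixes a terminal subray, so $H=\bigcup_n\big(H\cap \Stab(v_n)\big)$ is a strictly ascending union. Moreover, any $h\in H\cap\Stab(v_n)$ fixes $v_n$ and $\xi$, hence fixes $v_{n+1}$, so $H\cap\Stab(v_n)\subseteq\Stab(v_n)\cap\Stab(v_{n+1})=\Stab(e_n)=A_{\lk_\Gamma(w)}^{c_n}$. Thus $H$ is an infinite, strictly ascending union of subgroups, each contained in a conjugate of the link group, with $H\cap A_{\lk_\Gamma(w)}^{c_n}\subseteq A_{\lk_\Gamma(w)}^{c_n}\cap A_{\lk_\Gamma(w)}^{c_{n+1}}$. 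The point is to show that the structure of even Artin groups of FC-type forbids such a chain; note that in a general amalgam it is possible (as the examples $\Z[1/p]\leq\mathrm{BS}(1,p)$ and the unipotent subgroup of $\mathrm{SL}_2(\Q_p)$ show), so the FC-type hypothesis must be used in an essential way.

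Here is where I expect the real work to lie. By the theorem of Antol\'in and Foniqi \cite{AntolinFoniqi}, the intersections $A_{\lk_\Gamma(w)}^{c_n}\cap A_{\lk_\Gamma(w)}^{c_{n+1}}$ are again parabolic subgroups, so the ascending union is confined to a controlled family of parabolics. The plan is to combine this with the rigidity of parabolic subgroups (there are only finitely many standard parabolics, and inclusions among parabolic subgroups are strongly constrained) and with the fact that even Artin groups of FC-type are torsion-free and contain no infinitely divisible elements, to conclude that the chain $\{H\cap A_{\lk_\Gamma(w)}^{c_n}\}_n$ must stabilize. A stabilizing chain would place $H$ inside a single conjugate of $A_{\lk_\Gamma(w)}$, hence fix the corresponding vertex, contradicting the first step. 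Ruling out precisely the $\Z[1/p]$-type (infinitely divisible) behaviour that a priori allows a group to fix an end without fixing a vertex is the heart of the matter. Once the fixed-end case is excluded, the classification leaves only the existence of a hyperbolic element $h\in H$, as in the analogous result of Minasyan and Osin \cite{MinasyanOsin}, completing the proof.
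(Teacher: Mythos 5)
Your reduction is sound as far as it goes: translating the hypothesis into ``$H$ fixes no vertex of $T$'', reducing to the case where every element of $H$ is elliptic, using Serre's lemma (via the Helly property) to see that every finitely generated subgroup of $H$ then fixes a vertex, and concluding that $H$ must fix an end, so that $H=\bigcup_n\bigl(H\cap\Stab(v_n)\bigr)$ is an ascending union of subgroups of edge stabilizers $A_{\lk_\Gamma(w)}^{c_n}$. But the proof stops exactly where the content of the theorem lies: you never prove that this chain stabilizes. You only announce a ``plan'' to combine the Antol\'in--Foniqi intersection theorem with ``rigidity of parabolic subgroups'' and with torsion-freeness and the absence of infinitely divisible elements. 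That is a hope, not an argument, and the mechanism you emphasize (divisibility, as in $\Z[1/p]\leq \mathrm{BS}(1,p)$) is not the relevant one: nothing about torsion-freeness or non-divisibility by itself prevents a group from being a strictly ascending union of subgroups of conjugates of $A_{\lk_\Gamma(w)}$. What is needed is a concrete finiteness (Noetherian-type) statement about parabolic subgroups, and that statement has to be formulated and proved.

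This is precisely what the paper supplies, and it allows the paper to bypass the fixed-end dichotomy altogether. Using the Antol\'in--Foniqi theorem that arbitrary intersections of parabolic subgroups of an even Artin group of FC-type are parabolic \cite{AntolinFoniqi}, one defines the parabolic closure $P_{c_\Gamma}(X)$ of a subset $X\subseteq G$ as the unique minimal parabolic subgroup containing $X$. The key lemma (the paper's Claim \ref{Claim2}) is: for every $X\subseteq G$ there is a \emph{finite} subset $X'\subseteq X$ with $P_{c_\Gamma}(X)=P_{c_\Gamma}(X')$. Its proof maximizes, over finite subsets $Y\subseteq X$, the number of vertices of the subgraph $S(Y)$ appearing in $P_{c_\Gamma}(Y)=f(Y)A_{S(Y)}f(Y)^{-1}$ (possible because $\Gamma$ is finite), and then uses two facts from \cite{AntolinFoniqi}: an inclusion $g_1A_Sg_1^{-1}\subseteq g_2A_Tg_2^{-1}$ forces $S\subseteq T$, and an inclusion between conjugates of parabolics on the same subgraph is an equality. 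Granting this lemma, the contradiction is immediate and needs no ends: if every element of $H$ were elliptic, choose a finite $X'\subseteq H$ with $P_{c_\Gamma}(X')=P_{c_\Gamma}(H)$; then $\langle X'\rangle$ is finitely generated and all-elliptic, hence fixes a vertex of $T$, hence lies in some $fA_Df^{-1}$ with $D\in\{\st_\Gamma(w),\Gamma\setminus\{w\}\}$; since $fA_Df^{-1}$ is parabolic, $H\leq P_{c_\Gamma}(H)=P_{c_\Gamma}(X')\leq fA_Df^{-1}$, contradicting the hypothesis. The same lemma is also exactly what would close your route: the finite set $X'$ lies in $H\cap\Stab(v_n)\leq A_{\lk_\Gamma(w)}^{c_n}$ for some $n$, whence $H\leq P_{c_\Gamma}(X')\leq A_{\lk_\Gamma(w)}^{c_n}$ and $H$ fixes $v_n$, a contradiction. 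So the missing idea is this finite-subset parabolic-closure lemma; without it (or an equivalent) your argument does not go through.
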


Finally, we use this theorem to obtain the following corollary. Note that an acylindrically hyperbolic group cannot contain two infinite normal subgroups that intersect trivially (see \cite[Theorem 3.7]{MinasyanOsin}). Furthermore, even Artin groups of FC-type satisfy the $K(\pi,1)$-Conjecture (see Section \ref{subsec:Artingroups}), so they are torsion-free. Thus, Theorem \ref{hyperbolic} reflects that this obstruction yields a characterization of acylindrical hyperbolicity for subgroups of even Artin groups of FC-type.

\begin{corollary*}[Corollary \ref{acylindrically hyperbolic}]
 Let $A_\Gamma$ be an even Artin group of FC-type and $H\leq A_\Gamma$ a subgroup which is not virtually cyclic. Then either
\begin{itemize}
\item[(i)] there are $1\neq N_1,N_2\normal H$ with $N_1\cap N_2=1$ (so $N_1\times N_2\leq H$); or
\item[(ii)] $H$ is acylindrically hyperbolic.    
\end{itemize}    
\end{corollary*}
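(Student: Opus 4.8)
The plan is to prove the dichotomy directly, observing first that it suffices to do so: since an acylindrically hyperbolic group cannot contain two nontrivial normal subgroups meeting trivially (\cite[Theorem 3.7]{MinasyanOsin}, where torsion-freeness of $A_\Gamma$ lets us replace ``infinite'' by ``nontrivial''), alternatives (i) and (ii) are mutually exclusive and the content of the corollary is that one of them always holds. I would argue by induction on the number of vertices of $\Gamma$. For the base case, if $\Gamma$ is complete then the even FC-condition forces the edges with label $>2$ to form a matching (two such edges at a common vertex would create a triangle with at most one $2$-labeled edge), so $A_\Gamma$ splits as a direct product of dihedral Artin groups, one per matched edge, and a free abelian group. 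Here (i) or (ii) is checked by hand: a dihedral factor $A_{\{u,v\}}$ has infinite centre $Z$ and free commutator subgroup with $Z\cap[A_{\{u,v\}},A_{\{u,v\}}]=1$, supplying the two normal subgroups of (i), and projecting $H$ onto the direct factors reduces the general subgroup to these one-factor cases.

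If $\Gamma$ is not complete, there is a vertex $w$ with $\st_\Gamma(w)\subsetneq\Gamma$, yielding a nontrivial splitting $A_\Gamma=A_{\Gamma\setminus\{w\}}\ast_{A_{\lk_\Gamma(w)}}A_{\st_\Gamma(w)}$ with Bass--Serre tree $T$. For each such $w$ there are two possibilities. If, for every such $w$, the subgroup $H$ is conjugate into $A_{\st_\Gamma(w)}$ or into $A_{\Gamma\setminus\{w\}}$, then $H$ embeds in an even Artin group of FC-type on a proper (hence smaller) induced subgraph, and the induction hypothesis applies, since (i) and (ii) depend only on the isomorphism type of $H$. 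Otherwise some $w$ satisfies precisely the hypotheses of Theorem \ref{hyperbolic}, which then furnishes an element $h\in H$ acting hyperbolically on $T$.

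In this remaining case the goal is (ii). I would deduce acylindrical hyperbolicity from Osin's criterion that a group acting on a hyperbolic space --- here the tree $T$ --- with a loxodromic WPD element is either virtually cyclic or acylindrically hyperbolic; as $H$ is not virtually cyclic, it is enough to show that $h$ is WPD for the $H$-action. For tree actions of torsion-free groups this reduces to the statement that the pointwise $H$-stabilizer of a sufficiently long subsegment of $\Axis(h)$ is trivial. Such stabilizers are intersections of $H$ with finitely many conjugates of the edge group $A_{\lk_\Gamma(w)}$, and by Antol\'in and Foniqi's theorem \cite{AntolinFoniqi} these intersections are again parabolic.

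The crux, and the step I expect to be the main obstacle, is the resulting dichotomy for these parabolic segment-stabilizers. Either they become trivial once the subsegment is long enough, in which case $h$ is WPD and (ii) holds; or they stabilize to an infinite parabolic subgroup $P$ fixing a bi-infinite portion of $\Axis(h)$. In the latter situation I would exploit that $P$ is normalized by the segment stabilizers while $\langle h\rangle$ translates along the axis, so that $P$ together with a suitable conjugate (or with the kernel of the translation-length homomorphism along the axis) produces two nontrivial normal subgroups of $H$ intersecting trivially, i.e.\ alternative (i). Making this extraction precise --- quantifying how the parabolic intersections decay along the axis and showing that their failure to decay forces the commuting product structure --- is exactly where the even FC-hypothesis, the Antol\'in--Foniqi parabolic-intersection property, and torsion-freeness (to upgrade ``finite'' to ``trivial'') must be combined with care.
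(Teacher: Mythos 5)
Your outline reproduces the paper's skeleton (induction on $|V(\Gamma)|$, the complete/non-complete split, and Theorem \ref{hyperbolic} applied to the splitting at $w$), but the step you yourself flag as the crux is genuinely missing, and the mechanism you propose for it would fail. If the pointwise stabilizer $P=\mathrm{PStab}_{A_\Gamma}(\mathcal{L})$ of the axis of a hyperbolic $h\in H$ is a nontrivial parabolic, then $P$, its intersection with $H$, and the kernel of the translation-length homomorphism are normalized only by the setwise stabilizer $\mathrm{Stab}_H(\mathcal{L})$, which is in general a proper subgroup of $H$; so none of these, nor any single conjugate of them, is normal in $H$, and your recipe does not produce the two normal subgroups required for (i). Already in the simplest relevant example --- $\Gamma$ a $4$-cycle with all labels $2$, so $H=A_\Gamma=\langle a,c\rangle\times\langle b,d\rangle\cong F_2\times F_2$, with $h=bd$ hyperbolic and $P=\langle a,c\rangle$ --- your recipe only ever outputs $P$ itself (every conjugate of $P$ equals $P$, and the translation kernel is again $P$), never the complementary normal subgroup $\langle b,d\rangle$; note also that this example shows alternative (i), not (ii), can be the outcome in the ``remaining'' case, contrary to your assertion that the goal there is (ii). A smaller inaccuracy in the same step: intersections of $H$ with conjugates of $A_{\lk_\Gamma(w)}$ need not be parabolic; the Antol\'in--Foniqi intersection property applies to stabilizers taken in $A_\Gamma$, not intersected with $H$. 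What the paper actually does here (Theorem \ref{WPD}) is quite different: among all hyperbolic elements of $H$ it chooses one minimizing the \emph{parabolic dimension} of $\mathrm{PStab}_{A_\Gamma}(\mathcal{L})$, shows following \cite[Proposition 6.13]{MinasyanOsin} that this minimality forces $\mathrm{PStab}_{A_\Gamma}(\mathcal{L})$ to be normalized by \emph{all} of $H$, and then invokes the structure of normalizers of parabolic subgroups in even FC-type Artin groups (Proposition \ref{EvenFC_normalizer}, resting on Godelle's ribbon description via Lemma \ref{tec_Godelle}) to conclude $H\leq A_{T_1}\times A_{T_2}$ with both factors proper; that case is then closed by intersecting $H$ with the two factors. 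This normalizer machinery is the real content of Section 5, and nothing in your sketch substitutes for it.

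The complete-graph base case also has a gap that is not merely cosmetic. For a dihedral factor $G=A_{\{u,v\}}$ with label $2k>2$, the pair $Z=Z(G)$ and $[G,G]$ supplies (i) only when $H$ meets both nontrivially. If $H\cap Z=1$ --- for instance $H$ a nonabelian free subgroup of $G$, such as $[G,G]$ itself --- then alternative (i) is actually \emph{false} for $H$ (two nontrivial normal subgroups meeting trivially commute elementwise and would give $\Z^2$ inside a free group), so one must prove (ii); the paper does this by observing that such an $H$ embeds into $G/Z\cong\Z\ast\Z_k$ and citing \cite[Theorem 2.12]{MinasyanOsin}. In the opposite case $H\cap Z\neq 1$, what is needed is $H\cap F\neq 1$, where $F$ is the free normal subgroup with $Z\times F$ of finite index in $G$, and this uses that $H$ is not virtually cyclic. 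Finally, ``projecting $H$ onto the direct factors'' does not reduce to one-factor cases, since a subgroup of a direct product is not recovered from its projections; the correct move (and the paper's) is to \emph{intersect} $H$ with the factors: if both intersections are nontrivial one gets (i), and if one is trivial the projection embeds $H$ into the other factor so that induction applies.
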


\thanks{\noindent
The first named author has been supported by the Spanish Government grant PID2020-117281GB-I00, partly by the European Regional Development Fund (ERDF), and
the Basque Government, grant IT1483-22. The second named author has been partially supported by the Spanish Government PID2021-126254NB-I00 and Departamento de Ciencia, Universidad y Sociedad del 
Conocimiento del Gobierno de Arag{\'o}n (grant code: E22-23R: ``{\'A}lgebra y Geometr{\'i}a'').
We thank the anonymous referees for their comments and suggestions that helped to improve the paper.}

\section{Preliminaries}
\subsection{Artin groups}\label{subsec:Artingroups}

If $\Gamma$ is a graph, we will denote by $V(\Gamma)$ or $V$ (if the graph is the obvious one) the vertex set of $\Gamma$ and by $E(\Gamma)$ (or $E$) its edge set. We will work with labeled finite simplicial graphs, where the labels of the edges are integers bigger than or equal to 2 and the label of the edge $e=\{u,v\}$ is denoted by $m_e$.

\begin{definition}
The \emph{Artin group} associated to $\Gamma$, $A_{\Gamma}$, is the group defined by the presentation:
\[ A_\Gamma= \langle v\in V \mid vu\buildrel{m_e}\over\ldots=uv\buildrel{m_e}\over\ldots,\quad e=\{u,v\}\in E \rangle.\]
\emph{Right-angled Artin groups (RAAGs)} are defined as Artin groups in which all the edges of the graph have label $2$.
\end{definition}

\begin{definition}
The \emph{Coxeter group} associated to $\Gamma$, $W_\Gamma$, is the quotient of $A_\Gamma$ by the relations $v^2=1, v\in V$. That is, it is defined by the presentation:
\[ W_\Gamma= \langle v\in V \mid v^2=1, vu\buildrel{m_e}\over\ldots=uv\buildrel{m_e}\over\ldots,\quad e=\{u,v\}\in E \rangle.\]
\end{definition}

We will abuse notation and write $T\subseteq \Gamma$ if $T$ is a subgraph of $\Gamma$.

\begin{definition}\label{def:induced}
We say that a subgraph $T\subseteq \Gamma$ is \emph{induced} if whenever two vertices of $T$ are linked in $\Gamma$, then they are also linked in $T$ with the same label.

All the subgraphs considered in this paper will be induced so we will just refer them as \emph{subgraphs}.    
\end{definition}

If $T\subseteq \Gamma$ is a subgraph (which in our context means induced), by \cite{Bourbaki}, the subgroup of $W_\Gamma$ generated by the vertices of $T$ is isomorphic to the Coxeter group $W_T$, and analogously, by \cite{vanderLek}, the subgroup of $A_\Gamma$ generated by the vertices of $T$ is isomorphic to the Artin group $A_T$ (and we will use $A_T$ to denote this subgroup). Also, if $S,T\subseteq\Gamma$, then as subgroups of $A_\Gamma$ we have that $A_S\cap A_T=A_{S\cap T}$  (see \cite{vanderLek}). 

\begin{definition}
If $T$ is a subgraph of $\Gamma$, the group $A_T$ (respectively $W_T$) is called a \emph{standard parabolic subgroup} of $A_\Gamma$ (respectively of $W_\Gamma$).

A \emph{parabolic subgroup} of $A_\Gamma$ is a subgroup of the form $g A_T g^{-1}$ where $g\in A_\Gamma$ and $A_T$ is a standard parabolic subgroup.
\end{definition}

\begin{definition}
We say that $A_\Gamma$ is \emph{spherical} if its associated Coxeter group, $W_\Gamma$, is finite.  
\end{definition}

Note that if $A_\Gamma$ is spherical, then the graph $\Gamma$ must be complete. A particular type of spherical Artin groups are the so called \emph{dihedral} Artin groups which correspond to the case when the graph $\Gamma$ consists of two vertices joined by an edge.

\begin{definition}
We say that $A_\Gamma$ is of \emph{FC-type} if $A_T$ is spherical for every complete subgraph $T$ of $\Gamma$.
\end{definition}

\begin{definition}
We say that $A_\Gamma$ is \emph{even} if all the labels in the graph $\Gamma$ are even numbers. \end{definition}

There is an easy characterization of even Artin groups of FC-type in terms of the defining graphs.

\begin{lemma}\cite[Lemma 2.24]{Blasco}
Let $A_\Gamma$ be an even Artin group. Then $A_\Gamma$ is of FC-type if and only if every triangular subgraph of $\Gamma$ has at least two edges labeled with $2$.    
\end{lemma}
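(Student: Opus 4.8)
The plan is to reduce everything to the classification of finite Coxeter groups. By definition $A_T$ is spherical exactly when the associated Coxeter group $W_T$ is finite, and $A_\Gamma$ is of FC-type precisely when $W_T$ is finite for every complete subgraph $T\subseteq\Gamma$. Since we work with an even Artin group, every edge label $m_e$ is even, and for such a label the two endpoints of $e$ generate inside $W_\Gamma$ a dihedral Coxeter group $I_2(m_e)$, while an edge labeled $2$ is exactly a pair of commuting generators. The combinatorial observation I would isolate first is that the hypothesis ``every triangle has at least two edges labeled $2$'' is equivalent to: in every complete subgraph of $\Gamma$, the edges of label $\geq 4$ form a \emph{matching} (pairwise vertex-disjoint edges). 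Indeed, two such edges $\{a,b\}$ and $\{a,c\}$ sharing a vertex $a$ span a triangle on $\{a,b,c\}$ — which exists because the ambient subgraph is complete — that has at most one edge labeled $2$, and the converse is immediate.

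For the direction ``FC-type $\Rightarrow$ triangle condition'' I would argue on a single triangle $T$. FC-type gives $W_T$ finite, so $W_T$ is one of the finite Coxeter groups of rank $3$. The irreducible ones are $A_3$, $B_3$ and $H_3$, each of whose Coxeter diagrams carries an edge of odd label (always a $3$); under the even hypothesis none of these can occur. Hence $W_T$ must be reducible, i.e. a direct product of lower-rank finite Coxeter groups, necessarily $A_1^3$ or $A_1\times I_2(m_e)$. In both cases at least two of the three Coxeter entries equal $2$, which is to say the triangle has at least two edges labeled $2$. Equivalently, to avoid citing the full classification, I could invoke the positive-definiteness criterion: $W_T$ is finite iff the matrix $\bigl(-\cos(\pi/m_{ij})\bigr)$ is positive definite, and a direct determinant computation shows that a triangle with labels $(2,p,q)$ for $p,q\geq 4$, or with all three labels $\geq 3$, is only positive semidefinite or indefinite, hence yields an infinite $W_T$.

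For the converse ``triangle condition $\Rightarrow$ FC-type'', take any complete subgraph $T$. By the matching observation its label-$\geq 4$ edges are pairwise vertex-disjoint, say $e_1,\dots,e_k$, and every other pair of vertices commutes. I would partition $V(T)$ into the blocks consisting of the two endpoints of each $e_i$ together with the singletons of the vertices lying on no $e_i$. Any two vertices in distinct blocks are joined by a label-$2$ edge and so commute in $W_T$, whence $W_T$ splits as the direct product of the Coxeter groups of the blocks, $W_T\cong I_2(m_{e_1})\times\cdots\times I_2(m_{e_k})\times(\Z/2\Z)^{r}$ with $r$ the number of singleton blocks. Each factor is finite, so $W_T$ is finite and $A_T$ is spherical; as $T$ was arbitrary, $A_\Gamma$ is of FC-type.

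The main obstacle is the forward direction: one must be certain that no even-labeled triangle with at most one edge of label $2$ can be spherical. This is exactly where the classification of finite Coxeter groups (or, concretely, the Schläfli positive-definiteness computation in rank $3$) is indispensable. Everything else — in particular the direct-product decomposition used in the converse — is formal once the matching reformulation is in hand.
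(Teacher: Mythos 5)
Your proof is correct, but note that the paper does not actually prove this lemma: it is quoted verbatim from Blasco's thesis (\cite[Lemma 2.24]{Blasco}) and used as a black box, so there is no in-paper argument to compare against. Your proposal is a sound, self-contained substitute. The forward direction is right: a triangle in $\Gamma$ is a complete subgraph, FC-type forces $W_T$ finite, and since the irreducible rank-$3$ finite Coxeter groups $A_3$, $B_3$, $H_3$ all carry an edge labeled $3$, evenness forces $W_T$ reducible, hence of type $A_1^3$ or $A_1\times I_2(m)$, giving at least two labels equal to $2$; your alternative via the Schl\"afli criterion also checks out, since for labels $(2,p,q)$ with $p,q\geq 4$ the cosine matrix has determinant $1-\cos^2(\pi/p)-\cos^2(\pi/q)\leq 0$, and with all labels $\geq 4$ it is likewise not positive definite. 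The converse is also correct: your matching reformulation follows immediately from the triangle condition inside a complete subgraph (two label-$\geq 4$ edges sharing a vertex would span a triangle with at most one label $2$), and then the block partition makes the presentation of $W_T$ split visibly as a direct product $I_2(m_{e_1})\times\cdots\times I_2(m_{e_k})\times(\mathbb{Z}/2\mathbb{Z})^r$ of finite groups. The only stylistic caveat is that you rely on either the classification of finite rank-$3$ Coxeter groups or the positive-definiteness criterion; both are standard (e.g.\ \cite{Bourbaki}), and your determinant computation makes the argument essentially elementary, which is arguably cleaner than an appeal to the full classification.
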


As remarked in the introduction, the answers to several basic questions are not known in general for Artin groups, but even Artin groups of FC-type have many features in common with RAAGs. For instance, they are poly-free (which implies that they are also locally indicable and right orderable), residually finite, and they verify the $K(\pi, 1)$-Conjecture. To state this conjecture, which is one of the most important open problems about Artin groups, we recall that a CW-complex $X$ is said to be \emph{aspherical} if its only non-trivial homotopy group is the first homotopy group. For a group $\pi$, a CW-complex is a $K(\pi, 1)$ if it is aspherical and has first homotopy group the group $\pi$. 

Any Coxeter group can be represented as a discrete reflection group, i.e. a discrete group of linear transformations of a finite dimensional vector space $U$ with the generators acting as reflections with respect to some bilinear form $B$. If $W$ is a finite Coxeter group and $r\in W$ is a reflection, we define $H_r$ as the hyperplane consisting on the set of fixed points. This way, complexifying the action defined before one obtains a finite arrangement of complex hyperplanes $\mathbb{C}H_r$ in $\mathbb{C}^n$ such that $W$ acts freely in the complement, $Y_W= \mathbb{C}^n\setminus (\cup \mathbb{C}H_r)$. For infinite Coxeter groups, an analogous hyperplane complement $Y_W$ might be defined in $\mathbb{C}\otimes U$.

\begin{conjecture}[The $K(\pi,1)$-Conjecture]
Let $W$ be a Coxeter group and $A$ the associated Artin group. Then $Y_W \slash W$ is aspherical with fundamental group $A$, that is, $Y_W \slash W$ is a $K(A,1)$ space.    
\end{conjecture}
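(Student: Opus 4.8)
The plan is to separate the two assertions bundled in the statement. The identification $\pi_1(Y_W\slash W)\cong A$ is not the difficulty: it is already due to Brieskorn (finite type) and van der Lek (cited in the excerpt) in general, so the whole content is the \emph{asphericity} of $Y_W\slash W$. I would therefore reformulate the problem as the contractibility of a universal cover. First I would replace $Y_W\slash W$ by a homotopy-equivalent finite $CW$-model, the Salvetti complex $\mathrm{Sal}(\Gamma)$, whose cells are indexed by the spherical (finite-type) standard parabolic subgroups $A_T\leq A_\Gamma$ and whose fundamental group is $A_\Gamma$. The classical deformation retraction (Salvetti, Charney--Davis) gives $Y_W\slash W\simeq \mathrm{Sal}(\Gamma)$, so the conjecture reduces to showing that the universal cover $\widetilde{\mathrm{Sal}}(\Gamma)$ is contractible.

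Next I would introduce the modified Deligne complex $D_\Gamma$: the piecewise-Euclidean complex whose vertices are the cosets $gA_T$ with $A_T$ spherical, whose simplices are chains of such cosets under inclusion, and which is metrized by the Moussong/Davis metric assembled from the cells of the finite Coxeter groups $W_T$. The key homotopy-theoretic reduction, again following Charney--Davis, is that $\widetilde{\mathrm{Sal}}(\Gamma)$ is $A_\Gamma$-equivariantly homotopy equivalent to $D_\Gamma$; hence contractibility of $\widetilde{\mathrm{Sal}}(\Gamma)$ follows once $D_\Gamma$ is contractible. To obtain contractibility I would prove that $D_\Gamma$ is $\mathrm{CAT}(0)$: equip it with its natural metric and verify Gromov's link condition vertex by vertex, the point being that the link of a vertex is built from the Coxeter cells of the spherical parabolics meeting there and must be shown to be $\mathrm{CAT}(1)$; global $\mathrm{CAT}(0)$-ness then follows from simple connectivity by Cartan--Hadamard.

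This is exactly the step where the FC hypothesis does the work. Under the FC condition every complete subgraph is spherical, so the poset of spherical subsets is a flag simplicial complex and the vertex links in $D_\Gamma$ inherit the flag/$\mathrm{CAT}(1)$ property needed for Gromov's criterion; thus $D_\Gamma$ is $\mathrm{CAT}(0)$, hence contractible, and the conjecture holds. In the even FC-type case relevant to this paper I would also record an alternative, inductive route exploiting the amalgamated-product splittings along standard parabolics used in Theorem \ref{hyperbolic}: the vertex and edge groups are even dihedral Artin groups and lower-dimensional even FC-type groups for which the $K(\pi,1)$-property is already known, and asphericity is preserved under the associated graph-of-aspherical-spaces gluing, yielding the same conclusion.

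The main obstacle is that this entire scheme is genuinely special to the settings where a nonpositively curved structure is available, namely the spherical (Deligne), two-dimensional and FC-type (Charney--Davis), and affine (Paolini--Salvetti) cases. For a general Coxeter graph the spherical subsets need not form a flag complex, the Gromov link condition can fail in the presence of large labels and non-spherical triangles, and no substitute proof of contractibility of $D_\Gamma$ (equivalently of $\widetilde{\mathrm{Sal}}(\Gamma)$) is known. This is precisely why the statement is recorded here as a \emph{conjecture} rather than a theorem: my plan establishes it in the cases needed in this paper, above all the even FC-type case, but producing contractibility without curvature control remains open in full generality, and that is the hard part.
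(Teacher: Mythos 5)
You have correctly diagnosed the situation: the statement is recorded in the paper as an open \emph{conjecture}, and the paper gives no proof of it --- it only notes, with citations, that the conjecture holds for spherical Artin groups, RAAGs, FC-type Artin groups, and $2$-dimensional Artin groups, which is exactly the scope of the argument you propose. Your sketch of the FC-type case faithfully reproduces the Charney--Davis strategy \cite{CharneyDavis}: replace $Y_W\slash W$ by the Salvetti complex, pass to the modified Deligne complex of spherical parabolic cosets, and deduce contractibility from a CAT(0) structure via Gromov's link condition, with the FC hypothesis supplying the flag condition; the alternative inductive route through amalgamations along spherical parabolics is likewise sound for even FC-type, since FC-type groups are by construction built from spherical ones by such amalgamations and asphericity is preserved under the corresponding gluings. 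One small technical correction: in the FC-type case Charney and Davis use the \emph{cubical} metric on the Deligne complex, for which CAT(0)-ness is equivalent to the flag (i.e.\ FC) condition, whereas the Moussong-style piecewise Euclidean metric is the one employed in the $2$-dimensional case, where the link condition is verified by a different computation. Since no proof of the general conjecture exists, your closing acknowledgment that contractibility of the universal cover is open absent curvature control is the correct conclusion, and it matches how the paper itself uses the conjecture --- only in the cases (FC-type, $2$-dimensional) where it is an established theorem.
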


The conjecture is known to be true in several particular cases, including the cases of spherical Artin groups, RAAGs and, more generally, Artin groups of FC-type. In addition, an Artin group $A_\Gamma$ is called \emph{2-dimensional} if for any triangle subgraph $\Delta \subseteq \Gamma$, the Artin group $A_\Delta$ is not spherical. The spherical triangle Artin groups are precisely those whose associated graph has labels $(2,2,k), (2,3,3), (2,3,4)$ or $(2,3,5)$. The  $K(\pi,1)$-Conjecture is also true for $2$-dimensional Artin groups.

An equivalent formulation of the $K(\pi,1)$-Conjecture is that certain complex constructed by Salvetti, known as the \emph{Salvetti complex}, is a model for $K(\pi,1)$, \cite{CharneyDavis}. In the case of a 2-dimensional Artin group this complex is precisely the standard presentation complex, so from the fact that 2-dimensional Artin groups satisfy the $K(\pi,1)$-Conjecture, it follows that the presentation complex for these groups is aspherical.

 Another interesting and well-known property of even Artin groups is that standard parabolic subgroups are retracts (see, for example, \cite{AntolinFoniqi}). More explicitly, if $T \subseteq \Gamma$ is a subgraph, then there is a well-defined epimorphism $$\begin{aligned}
\pi\colon A_\Gamma &\to A_T\\
v&\mapsto v\text{ for }v\in V(T)\\
u&\mapsto 1\text{ for }u\not\in V(T)\\
\end{aligned}$$
which is a retract of the inclusion monomorphism $\iota\colon A_T \to A_\Gamma$.

\subsection{Coherent right-angled Artin groups}\label{sec:coherent}

Recall that a group is called \emph{coherent} if each of its finitely generated subgroups is finitely presented.

For the class of RAAGs, Droms provided a characterization of coherent RAAGs in terms of the associated graph.

\begin{theorem}\cite[Theorem 1]{Droms}\label{RAAGscoherence_classification}
If $A_\Gamma$ is a RAAG, then it is coherent if and only if $\Gamma$ is \emph{chordal}, that is, if $\Gamma$ does not contain a subgraph isomorphic to a cycle of length greater than $3$.
\end{theorem}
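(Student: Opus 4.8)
The plan is to prove the two implications separately, relating the combinatorics of $\Gamma$ to finiteness properties of subgroups of $A_\Gamma$.

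For the ``only if'' direction I would argue by contraposition. Suppose $\Gamma$ is not chordal, so it contains an induced cycle $C=C_n$ with $n\geq 4$. By van der Lek's theorem the standard parabolic $A_C$ is a genuine subgroup of $A_\Gamma$ isomorphic to the RAAG on $C$. Since coherence is inherited by arbitrary subgroups (a finitely generated subgroup of a subgroup $H\leq A_\Gamma$ is a finitely generated subgroup of $A_\Gamma$, hence finitely presented), it suffices to exhibit a single finitely generated, non-finitely-presented subgroup of $A_C$. Here I would invoke the Bestvina--Brady construction: let $L$ be the flag complex of $C$ and let $H_C=\kernel(A_C\to\Z)$ be the kernel of the map sending every generator to $1$. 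Because $C$ is a chordless cycle of length at least $4$ it contains no triangles, so $L$ is the cycle itself, which is connected but not simply connected. The Bestvina--Brady finiteness criterion then gives that $H_C$ is finitely generated but not finitely presented, so $A_C$, and therefore $A_\Gamma$, is incoherent. (For the shortest cycle one may instead use the classical fact that $A_{C_4}\cong F_2\times F_2$.)

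For the ``if'' direction I would induct on the number of vertices using the structure theory of chordal graphs. A chordal graph with at least one vertex has a \emph{simplicial} vertex $v$, i.e.\ one whose link $L=\lk_\Gamma(v)$ is a clique, and $\Gamma\setminus v$ is again chordal. Since $\st_\Gamma(v)=\{v\}\cup L$ is then a clique, we have $A_{\st_\Gamma(v)}\cong\Z^{|L|+1}$ and $A_L\cong\Z^{|L|}$, and one obtains the splitting
\[
A_\Gamma = A_{\Gamma\setminus v}\ast_{A_L}A_{\st_\Gamma(v)},\qquad A_{\st_\Gamma(v)}=A_L\times\langle v\rangle.
\]
By induction $A_{\Gamma\setminus v}$ is coherent, and the free abelian vertex group is coherent, so the remaining task is a coherence-preservation statement for this amalgamation.

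The heart of the argument, and the step I expect to be the main obstacle, is therefore the following coherence lemma: if $B$ is coherent and $C$ is a finitely generated abelian subgroup that is a retract of $B$, then $B\ast_C(C\times\Z)$ is coherent. Both pieces of structure are essential and I would exploit each: $C=A_L$ is a retract of $B=A_{\Gamma\setminus v}$ (standard parabolics of even, in particular right-angled, Artin groups are retracts, as recalled above), and $C$ is a direct factor of the other vertex group. This is delicate precisely because coherence is \emph{not} preserved under general amalgamation along abelian subgroups; indeed $A_{C_4}\cong F_2\times F_2$ illustrates what goes wrong exactly when $\Gamma$ has no simplicial vertex and hence no such retract splitting is available. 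To prove the lemma I would let a finitely generated $H\leq B\ast_C(C\times\Z)$ act on the Bass--Serre tree of the splitting and analyse the induced graph-of-groups decomposition of $H$: the retraction $B\to C$ together with the direct-factor structure should let one control the edge groups $H\cap C^g$ and the vertex groups $H\cap B^g$, showing that the decomposition is finite with finitely presented vertex groups, whence $H$ is finitely presented. Obtaining this control over the intersections is the technical crux of the whole proof.
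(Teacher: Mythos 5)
First, a remark on the comparison you asked for: the paper does not prove this statement at all --- it is quoted directly as \cite[Theorem 1]{Droms}, and the paper only records the two ingredients of Droms' sufficiency argument elsewhere, namely the chordal decomposition (Lemma \ref{chordalgraph}) and Karrass--Solitar-type subgroup theory (the source cited for Lemma \ref{freeamalgamated}). So your proposal must be judged against the standard proof. Your ``only if'' half is complete and correct, though by a more modern route than Droms': induced cycles give parabolic subgroups $A_{C_n}$ by van der Lek, coherence passes to arbitrary subgroups, and the Bestvina--Brady kernel of $A_{C_n}\to\Z$ is finitely generated but not finitely presented because the flag complex of an induced cycle of length at least $4$ is connected but not simply connected (for $n=4$ the Stallings--Bieri subgroup of $F_2\times F_2$ does the same job). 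That half stands.

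The ``if'' half, however, has a genuine gap, and your diagnosis of where the difficulty lies points at the wrong structure. You reduce the induction (correctly, via simplicial vertices of chordal graphs) to the lemma that $B\ast_C(C\times\Z)$ is coherent when $B$ is coherent and $C$ is a finitely generated abelian retract of $B$, but you do not prove it: you only express the hope that ``the retraction together with the direct-factor structure should let one control'' the intersections $H\cap B^g$ and $H\cap C^g$. Neither the retraction nor the direct-factor structure is the operative hypothesis, and in fact they cannot be: in the incoherent example $A_{C_4}\cong F_2\times F_2=(\Z\times F_2)\ast_{F_2}(\Z\times F_2)$, the amalgamated subgroup $F_2=\langle v_1,v_3\rangle$ \emph{is} a retract of both vertex groups (kill the central generator), so ``edge group is a retract of the factors'' does not separate the chordal from the non-chordal case. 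What separates them is that your edge group is finitely generated \emph{abelian}, hence Noetherian: every subgroup of every conjugate $C^g$ is finitely generated. That is the entire mechanism, and it makes the lemma true for an arbitrary coherent second factor, not just $C\times\Z$. Concretely: given a finitely generated $H\leq B\ast_C B'$ with $B,B'$ coherent and $C$ finitely generated abelian, either $H$ fixes a vertex of the Bass--Serre tree (and is then finitely presented by coherence of the factors), or its minimal invariant subtree has finite quotient because $H$ is finitely generated; the edge groups $H\cap C^g$ of the induced finite graph of groups are finitely generated automatically by Noetherianity; a standard normal-form argument then shows that a finitely generated fundamental group of a finite graph of groups with finitely generated edge groups has finitely generated vertex groups, so the groups $H\cap B^g$, $H\cap(B')^g$ are finitely generated, hence finitely presented by coherence of $B$ and $B'$; and a finite graph of groups with finitely presented vertex groups and finitely generated edge groups has finitely presented fundamental group. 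This is exactly the \cite{KarrassSolitar}-style input behind Droms' proof. Until you supply this argument (or an equivalent one), your treatment of the chordal $\Rightarrow$ coherent direction is a reduction, not a proof.
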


The proof of the previous result is based on a graph-theoretic observation that we will also use.

\begin{lemma}\label{chordalgraph}
If $\Gamma$ is chordal, then either $\Gamma$ is a complete graph or there are two subgraphs $\Gamma_1, \Gamma_2$ such that $\Gamma=\Gamma_1\cup\Gamma_2$ and $\Gamma_1 \cap \Gamma_2$ is complete. 
\end{lemma}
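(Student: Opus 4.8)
The plan is to exploit the classical fact (essentially Dirac's theorem) that in a chordal graph every minimal vertex separator induces a complete subgraph, and then to cut $\Gamma$ along such a separator. First I would dispose of the complete case, so assume $\Gamma$ is not complete; then there exist two vertices $a,b\in V(\Gamma)$ not joined by an edge. Among all subsets $S\subseteq V(\Gamma)$ whose removal places $a$ and $b$ in different connected components of the subgraph induced on $V(\Gamma)\setminus S$, I would choose one minimal with respect to inclusion. Writing $C_a$ and $C_b$ for the connected components of $a$ and $b$ in $\Gamma\setminus S$, minimality of $S$ guarantees that each vertex of $S$ has a neighbour in $C_a$ and a neighbour in $C_b$: indeed, if some $s\in S$ had no neighbour in $C_a$, then in $\Gamma\setminus(S\setminus\{s\})$ the set $C_a$ would still be unable to reach $s$ or any other vertex of $S$, so $S\setminus\{s\}$ would already separate $a$ from $b$, contradicting minimality.

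The geometric heart of the argument, and the step I expect to be the main obstacle, is proving that $S$ is complete. Given two distinct vertices $x,y\in S$, since each has a neighbour in the connected set $C_a$, there is a path from $x$ to $y$ whose interior lies in $C_a$; I would take such a path $P_a$ of minimal length, and symmetrically a shortest path $P_b$ from $x$ to $y$ through $C_b$. Because the interiors of $P_a$ and $P_b$ lie in different components of $\Gamma\setminus S$, they share no vertices and are joined by no edges, so $P_a\cup P_b$ is a cycle of length at least $4$ passing through $x$ and $y$. Being shortest paths, $P_a$ and $P_b$ are chordless, and no edge joins the interior of one to the interior of the other; hence the only chord this cycle can possibly have is the edge $xy$. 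Chordality of $\Gamma$ forces a chord, so $xy\in E(\Gamma)$. As $x,y$ were arbitrary, $S$ induces a complete subgraph.

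Finally I would assemble the splitting. Let $\Gamma_1$ be the subgraph of $\Gamma$ induced on $C_a\cup S$ and $\Gamma_2$ the subgraph induced on $V(\Gamma)\setminus C_a$, which contains $S\cup C_b$. Their vertex sets meet exactly in $S$, so $\Gamma_1\cap\Gamma_2$ is the subgraph induced on $S$, which is complete by the previous step. Since $S$ separates $C_a$ from the remaining components, every edge of $\Gamma$ has both endpoints inside $C_a\cup S$ or both inside $V(\Gamma)\setminus C_a$; therefore $\Gamma=\Gamma_1\cup\Gamma_2$. Both subgraphs are moreover proper, since $b\notin\Gamma_1$ and $a\notin\Gamma_2$, which completes the argument.
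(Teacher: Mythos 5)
Your proof is correct. The paper itself does not prove this lemma at all: it records it as a known graph-theoretic observation underlying Droms' coherence theorem and implicitly defers to \cite{Droms}, so there is no internal argument to compare against. What you have written is precisely the classical proof (Dirac's lemma that minimal vertex separators in chordal graphs are cliques): the minimality argument giving each separator vertex a neighbour in both $C_a$ and $C_b$ is sound, the two shortest paths through $C_a$ and $C_b$ do form a cycle of length at least $4$ whose only possible chord is $xy$, and the final assembly $\Gamma_1 = C_a \cup S$, $\Gamma_2 = V(\Gamma)\setminus C_a$ correctly yields $\Gamma = \Gamma_1 \cup \Gamma_2$ with $\Gamma_1 \cap \Gamma_2$ induced on $S$. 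Two harmless points you may wish to make explicit: if $\Gamma$ is disconnected the minimal separator is empty and the intersection is the empty graph (vacuously complete, and consistent with the paper's later use of the lemma, where $A_\Delta$ is then trivial); and in the clique step one should either assume $x,y$ non-adjacent and argue by contradiction, or note that the conclusion is trivial when $xy$ is already an edge, so that the paths $P_a$, $P_b$ can be taken with non-empty interior.
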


Another characterization of coherent RAAGs was shown later on by Droms, B. Servatius and H. Servatius which uses the derived subgroup.

\begin{theorem}\cite[Theorem 2]{DromsServatiusServatius}\label{RAAGscoherence_commutator}
If $A_\Gamma$ is a RAAG, then it is coherent if and only if its derived subgroup $[A_\Gamma, A_\Gamma]$ is free.
\end{theorem}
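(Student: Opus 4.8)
The plan is to use Droms' characterization (Theorem \ref{RAAGscoherence_classification}) to replace coherence by the graph condition, so that the whole statement reduces to proving: $\Gamma$ is chordal if and only if $[A_\Gamma,A_\Gamma]$ is free. I would then prove the two implications separately, the forward one by induction using Lemma \ref{chordalgraph} and Bass–Serre theory, and the converse by a homological obstruction applied to the induced cycles witnessing non-chordality.

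For the forward direction (chordal $\Rightarrow$ derived free) I would induct on $|V(\Gamma)|$. If $\Gamma$ is complete then $A_\Gamma\cong\Z^{|V|}$ and $[A_\Gamma,A_\Gamma]=1$ is free. Otherwise Lemma \ref{chordalgraph} yields $\Gamma=\Gamma_1\cup\Gamma_2$ with $\Delta:=\Gamma_1\cap\Gamma_2$ complete and $\Gamma_1,\Gamma_2\subsetneq\Gamma$; since no edge of $\Gamma$ joins $\Gamma_1\setminus\Delta$ to $\Gamma_2\setminus\Delta$, the defining presentation gives the amalgamated decomposition $A_\Gamma=A_{\Gamma_1}\ast_{A_\Delta}A_{\Gamma_2}$. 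Let $K:=[A_\Gamma,A_\Gamma]=\ker(A_\Gamma\twoheadrightarrow A_\Gamma^{\mathrm{ab}})$ act on the Bass–Serre tree of this splitting. The crucial point is that $A_\Delta$, being the RAAG on a complete graph, is free abelian and maps injectively into $A_\Gamma^{\mathrm{ab}}=\Z^{V}$; hence $K\cap A_\Delta=1$ and, by normality of $K$, every edge stabiliser of the $K$-action is trivial. The vertex stabilisers are the conjugates of $K\cap A_{\Gamma_i}=\ker(A_{\Gamma_i}\to A_\Gamma^{\mathrm{ab}})=[A_{\Gamma_i},A_{\Gamma_i}]$, which are free by induction, each $\Gamma_i$ being an induced (hence chordal) subgraph of $\Gamma$. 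Bass–Serre theory then presents $K$ as the fundamental group of a graph of groups with trivial edge groups and free vertex groups, i.e.\ as a free product of free groups with a free group, so $K$ is free.

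For the converse I argue contrapositively: if $\Gamma$ is not chordal it contains an induced cycle $C_n$ with $n\geq 4$, and since $C_n$ is induced, $A_{C_n}$ is a retract of $A_\Gamma$, so the inclusion identifies $[A_{C_n},A_{C_n}]$ with a subgroup of $[A_\Gamma,A_\Gamma]$; were the latter free, the former would be free by Nielsen–Schreier. It therefore suffices to show $[A_{C_n},A_{C_n}]$ is not free, which I would do by computing its second homology. Writing $K=\ker(A_{C_n}\twoheadrightarrow\Z^n)$, the cover of the two-dimensional Salvetti complex corresponding to $K$ is a $K(K,1)$, and its cellular chain complex is obtained from the equivariant chain complex of the universal cover by tensoring down to $\Lambda:=\Z[\Z^n]=\Z[t_1^{\pm1},\dots,t_n^{\pm1}]$, giving $\Lambda^n\xrightarrow{\partial_2}\Lambda^n\xrightarrow{\partial_1}\Lambda$ with $\partial_2(\{i,i+1\})=(t_{i+1}-1)\epsilon_i-(t_i-1)\epsilon_{i+1}$ (first $\Lambda^n$ indexed by edges $\{i,i+1\}$, second by vertices). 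As the complex is two-dimensional, $H_2(K;\Z)=\ker\partial_2$, and I would exhibit the explicit cycle $a_{\{i,i+1\}}=\prod_{k\notin\{i,i+1\}}(t_k-1)$: a direct check gives $(t_{i+1}-1)a_{\{i,i+1\}}=(t_{i-1}-1)a_{\{i-1,i\}}=\prod_{k\neq i}(t_k-1)$, so this element lies in $\ker\partial_2$ and is nonzero because $\Lambda$ is a domain. Hence $H_2(K;\Z)\neq 0$, whereas a free group has trivial $H_2$, so $K$ is not free.

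The main obstacle is the converse, and precisely the need to handle every induced cycle length $n\geq 4$ uniformly: a non-chordal graph need not contain an induced square, so one cannot simply reduce to the transparent case $A_{C_4}\cong F_2\times F_2$ where $[A_{C_4},A_{C_4}]=[F_2,F_2]\times[F_2,F_2]$ visibly contains $\Z^2$. The homological computation is what makes the argument uniform; the only delicate points are pinning down the boundary map $\partial_2$ (its overall sign is irrelevant for the kernel) and verifying that the family $\prod_{k\notin\{i,i+1\}}(t_k-1)$ really is a nonzero element of $\ker\partial_2$. A more elementary but essentially equivalent alternative would be to locate a copy of $\Z^2$ inside $[A_{C_n},A_{C_n}]$ directly, but producing such a subgroup uniformly in $n$ appears no easier than the above $H_2$ calculation.
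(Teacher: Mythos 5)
Your proposal is correct and follows essentially the same route as the paper, which cites this theorem but proves its generalization to all Artin groups (Theorem \ref{Artincoherence_commutator}) by exactly your architecture: the forward direction via the amalgamated splitting over a free abelian standard parabolic, triviality of the derived subgroup's intersection with the amalgam, identification of $K\cap A_{\Gamma_i}$ with $[A_{\Gamma_i},A_{\Gamma_i}]$ via the abelianization, and induction (your Bass--Serre argument is the paper's appeal to Karrass--Solitar, Lemma \ref{freeamalgamated}); the converse via reduction to an induced cycle and non-vanishing of $H_2$ of the derived subgroup, computed from the aspherical two-dimensional presentation/Salvetti complex with $n$ generators and $n$ relators. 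The only real divergence is how $\ker\partial_2\neq 0$ is established: you exhibit the explicit cycle $\bigl(\prod_{k\notin\{i,i+1\}}(t_k-1)\bigr)_{i}$, whereas the paper's Lemma \ref{lem:aspherical} obtains it abstractly from a dimension count over the field of fractions of $\Z[G/N]$ using $|X|\leq|R|$, a softer argument that needs no explicit boundary computation and yields the stronger conclusion that the derived subgroup is not even finitely presented.
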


\section{Coherent Artin groups}
The goal of this section is to gather the multiple results that can be found in the literature to unify them and to give an analogous classification for Artin groups as the one in Theorem \ref{RAAGscoherence_classification} for RAAGs, as well as to show that Theorem \ref{RAAGscoherence_commutator} still holds in the general class of Artin groups.

\begin{figure}[ht]
\begin{tikzpicture}
\tikzstyle{pto} = [circle, minimum width=4pt, fill, inner sep=0pt]
\node[pto] (n1) at (0,1) {};
\node[pto] (n2) at (3,1) {};
\node[pto] (n3) at (1.5,0) {};
\node[pto] (n4) at (1.5,2) {};
\node (n5) at (1.3,1) {$m$};
\draw (n1) node[above] {$a$} ;
\draw (n2) node[above] {$b$} ;
\draw (n4) node[above] {$v$} ;
\draw (n3) node[below] {$w$} ;
\draw (0.70,0.55) node[below] {$2$} ;
\draw (2.30,0.55) node[below] {$2$} ;
\draw (0.7,1.45) node[above] {$2$} ;
\draw (2.3,1.45) node[above] {$2$} ;
\draw (6,1) node {$m>2$} ;

\draw (n4)--(n1) -- (n3) -- (n2) -- (n4) -- (n3);
\end{tikzpicture}
\caption{}
\label{fig:conditioniii}
\end{figure}

Recall that Droms characterized coherent RAAGs in \cite{Droms}. Subsequently, Gordon \cite{Gordon} gave a classification of coherent Artin groups that depended upon the incoherence of the Artin group with associated labeled graph being a triangle with labels $(2,3,5)$. Finally, Wise proved in \cite{Wise} that such an Artin group is not coherent, maintaining the simplicity of Gordon's classification.

\begin{theorem}\cite{Droms,Gordon,Wise}
\label{Artincoherence_classification} Let $\Gamma$ be a labeled finite simplicial graph. The Artin group associated to $\Gamma$, $A_\Gamma$, is coherent if and only if the following three conditions hold:
\begin{itemize}
\item[(i)] $\Gamma$ is chordal;
\item[(ii)] for any complete subgraph $\Delta\subseteq\Gamma$ of 3 or 4 vertices, $\Delta$ has at most one edge with a label different to $2$;
\item[(iii)] $\Gamma$ has no subgraph as in Figure \ref{fig:conditioniii}.
\end{itemize}
\end{theorem}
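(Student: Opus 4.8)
The statement is a synthesis of the three cited works, so the plan is to prove the two implications separately, in each direction first reducing to a short list of minimal subgraphs and then feeding in the appropriate deep input. The elementary fact I would use throughout is that coherence is inherited by subgroups: a finitely generated subgroup of a subgroup $H\leq A_\Gamma$ is a finitely generated subgroup of $A_\Gamma$, hence finitely presented. In particular every standard parabolic $A_T$ (a subgroup of $A_\Gamma$ by \cite{vanderLek}) is coherent whenever $A_\Gamma$ is, so it suffices to control the coherence of parabolics associated to small subgraphs.

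\textbf{Necessity.} I would prove the contrapositive by exhibiting, for each way of violating (i)--(iii), an incoherent parabolic subgroup. Two patterns are immediate from the product structure together with the fact that a dihedral Artin group $A_{I_2(m)}$ with $m>2$ contains a free group of rank two. First, a complete subgraph on four vertices carrying two \emph{disjoint} edges of label $>2$ gives $A_{I_2(p)}\times A_{I_2(q)}\supseteq F_2\times F_2$. Second, the configuration of Figure~\ref{fig:conditioniii} gives $A_{I_2(m)}\times F_2\supseteq F_2\times F_2$, since its two non-adjacent vertices generate a free group centralising the dihedral subgroup; here I would use $A_S\cap A_T=A_{S\cap T}$ to see that the product is genuine. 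These dispose of (iii) and of the failures of (ii) exhibiting two disjoint large edges, and every remaining failure of (ii) contains a triangle carrying two labels $>2$. The incoherence of such triangles (including the spherical types $(2,3,3),(2,3,4),(2,3,5)$) and of an induced cycle of length $\geq 4$ (failure of (i)) is the substantive content of Gordon~\cite{Gordon}; the one triangle he could not settle, $(2,3,5)$, is exactly Wise's theorem~\cite{Wise}, and the RAAG instance of the cycle is Theorem~\ref{RAAGscoherence_classification}.

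\textbf{Sufficiency.} Assuming (i)--(iii), I would induct on $|V(\Gamma)|$ via Lemma~\ref{chordalgraph}. If $\Gamma$ is complete then, since any two distinct edges span a complete subgraph on $3$ or $4$ vertices, (ii) forces at most one edge of label $>2$, so $A_\Gamma\cong A_{I_2(m)}\times\Z^k$ or $A_\Gamma\cong\Z^n$. If $\Gamma$ is not complete, Lemma~\ref{chordalgraph} produces proper subgraphs with $\Gamma=\Gamma_1\cup\Gamma_2$ and $\Delta=\Gamma_1\cap\Gamma_2$ complete, giving the decomposition
\[A_\Gamma=A_{\Gamma_1}\ast_{A_\Delta}A_{\Gamma_2};\]
conditions (i)--(iii) pass to the induced subgraphs $\Gamma_1,\Gamma_2$, so both vertex groups are coherent by induction, and $A_\Delta$ is dihedral-by-abelian.

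\textbf{Main obstacle.} The hard part is the sufficiency direction, and within it the passage from coherence of the vertex groups to coherence of the amalgam. Coherence is not preserved under amalgamated products in general, and here the edge group $A_\Delta$ is only dihedral-by-abelian, so it may itself contain $F_2$ and no cheap ``amalgam over a polycyclic subgroup'' argument applies. This is exactly where Gordon's structural work is essential: under (i)--(iii) the groups assembled along the chordal splitting are built from polycyclic groups and from $3$-manifold groups, coherence of the latter being supplied by Scott's theorem. I therefore expect that checking coherence is stable under precisely these graph-of-groups decompositions is where the real difficulty lies, the necessity direction being---modulo the two quoted incoherence facts---a finite inspection of forbidden configurations.
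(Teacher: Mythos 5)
A preliminary remark: the paper itself does not prove Theorem \ref{Artincoherence_classification}; it imports it wholesale from \cite{Droms,Gordon,Wise}, and the only related argument it supplies is Proposition \ref{Artincoherent_decomposition}, which the authors describe as implicit in \cite{Gordon}. So your proposal must be judged against the cited literature and against that proposition. Your necessity direction is correct and apportions the work accurately: coherence passes to subgroups, hence to standard parabolics via \cite{vanderLek}; a violation of (ii) contains either a triangle with two labels $>2$ or a complete graph on four vertices whose only two large-label edges are disjoint; your $F_2\times F_2$ arguments for that last configuration and for Figure \ref{fig:conditioniii} are correct (both parabolics are visibly direct products of a dihedral Artin group, which contains $F_2$ when its label exceeds $2$, with another group containing $F_2$); and the remaining cases --- triangles with two labels $>2$ and induced cycles of length at least $4$ --- are exactly the deep content of \cite{Gordon}, with the $(2,3,5)$ triangle supplied by \cite{Wise}.

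The gap is in sufficiency, at the precise step you flag as the main obstacle, and your guess at how it is overcome is off the mark. The resolution is not 3-manifold theory: under (i)--(iii) the complete-graph pieces are (dihedral Artin)$\,\times\,\Z^k$ with $k$ arbitrary, and for $k\geq 2$ these contain $\Z^4$, so they are not 3-manifold groups and Scott's theorem cannot be the source of their coherence (that follows instead from their central extension structure, or see Example \ref{ex:dihedral} for the dihedral factor). What you are missing is exactly Proposition \ref{Artincoherent_decomposition}: if the splitting produced by Lemma \ref{chordalgraph} has amalgamated subgraph $\Delta$ containing an edge $\{v,w\}$ of label $>2$, then conditions (ii) and (iii) allow one to re-choose the splitting so that the new amalgamated subgraph is complete with all labels equal to $2$ --- otherwise one finds vertices $a$ and $b$ on the two sides of the splitting, each joined to both $v$ and $w$, with labels forced to be $2$ by (ii), and then $\{v,w,a,b\}$ spans the forbidden graph of Figure \ref{fig:conditioniii}. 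Once the edge group is finitely generated free abelian, the ``cheap'' argument you correctly anticipated does apply: a finitely generated subgroup $H$ of the amalgam either fixes a vertex of the Bass--Serre tree (and is handled by induction on $|V(\Gamma)|$), or acts on its minimal subtree with finite quotient and with edge stabilizers that are finitely generated abelian, so the vertex groups of the induced finite splitting of $H$ are finitely generated, hence finitely presented by induction, and therefore $H$ is finitely presented. With that insertion, your outline becomes a faithful account of how \cite{Droms,Gordon,Wise} fit together.
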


Using this characterization of coherent Artin groups, we now show a result that, although is not explicitly stated in \cite{Gordon}, it is implicitly used and will be helpful throughout the article.

\begin{proposition}\label{Artincoherent_decomposition}
Let $A_\Gamma$ be a coherent Artin group. Then either
\begin{itemize}
    \item $\Gamma$ is a complete graph and $A_\Gamma$ is the direct product of a dihedral Artin group and a free abelian group; or
    \item $A_\Gamma$ decomposes as an amalgamated free product \[A_\Gamma= A_{\Gamma_1} \ast_{A_\Delta} A_{\Gamma_2},\]
    where $\Gamma_1, \Gamma_2, \Delta$ are subgraphs, $\Delta= \Gamma_1 \cap \Gamma_2$ and $A_\Delta$ is a free abelian group.
\end{itemize}
\end{proposition}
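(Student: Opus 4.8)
The plan is to argue according to whether or not $\Gamma$ is complete, and in the non-complete case to manufacture the required splitting from a suitably minimal clique separator, using conditions (ii) and (iii) of Theorem \ref{Artincoherence_classification} to force the amalgamating subgroup to be free abelian. Throughout I rely on Theorem \ref{Artincoherence_classification}: coherence of $A_\Gamma$ means $\Gamma$ is chordal and satisfies (ii) and (iii).

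First suppose $\Gamma$ is complete. Then any set of $3$ or $4$ vertices induces a complete subgraph, so (ii) applies to all of them, and I would show that $\Gamma$ has at most one edge whose label differs from $2$: two distinct such edges either share a vertex, and then lie in a common triangle, or are disjoint, and then lie in a common complete subgraph on $4$ vertices, contradicting (ii) in either case. Hence if $\{a,b\}$ is the unique edge with label $m\neq 2$, every remaining vertex commutes with all vertices, so $A_\Gamma\cong\langle a,b\mid\ldots\rangle\times\Z^{|V(\Gamma)|-2}$ is the direct product of the dihedral Artin group on $\{a,b\}$ and a free abelian group; when no such edge exists $A_\Gamma$ is free abelian, the cases $|V(\Gamma)|\le 2$ being immediate (with $\Z^2$ read as the dihedral Artin group of label $2$).

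Now suppose $\Gamma$ is not complete. By chordality and Lemma \ref{chordalgraph} I may write $\Gamma=\Gamma_1^0\cup\Gamma_2^0$ with $\Delta_0=\Gamma_1^0\cap\Gamma_2^0$ complete; since these are induced subgraphs, the equality $\Gamma=\Gamma_1^0\cup\Gamma_2^0$ of graphs already forbids any edge joining $\Gamma_1^0\setminus\Delta_0$ to $\Gamma_2^0\setminus\Delta_0$, so $\Delta_0$ is a genuine separator. Choosing $x\in\Gamma_1^0\setminus\Delta_0$, $y\in\Gamma_2^0\setminus\Delta_0$ and passing to a minimal $x$--$y$ separator $\Delta\subseteq\Delta_0$, the set $\Delta$ is still complete (being contained in the clique $\Delta_0$) and, by minimality, has the full--component property: writing $C_x,C_y$ for the components of $\Gamma\setminus\Delta$ containing $x,y$, every vertex of $\Delta$ has a neighbour in $C_x$ and a neighbour in $C_y$. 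Putting $\Gamma_1=\Delta\cup C_x$ and $\Gamma_2=\Gamma\setminus C_x$ (induced), I get $\Gamma=\Gamma_1\cup\Gamma_2$, $\Gamma_1\cap\Gamma_2=\Delta$, both proper, with no edges across; reading off the standard presentation together with van der Lek's embedding of parabolic subgroups then yields $A_\Gamma=A_{\Gamma_1}\ast_{A_\Delta}A_{\Gamma_2}$.

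The heart of the matter, and the step I expect to be the main obstacle, is showing that $A_\Delta$ is free abelian, that is, that every edge of the complete graph $\Delta$ has label $2$. Suppose instead $\Delta$ contains an edge $\{v,w\}$ with label $m>2$. The crucial point is a common--neighbour statement: since $v,w$ are adjacent and each has a neighbour in the connected set $C_x$, they admit a \emph{common} neighbour in $C_x$. I would prove this by taking a shortest path from $v$ to $w$ with all interior vertices in $C_x$; were there more than one interior vertex, these would form an induced path to which $v$ and $w$ have no further neighbours (by shortestness), producing a chordless cycle of length $\geq 4$ and contradicting chordality. Applying this in both $C_x$ and $C_y$ gives common neighbours $a\in C_x$ and $b\in C_y$ of $v,w$. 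The triangles $\{a,v,w\}$ and $\{b,v,w\}$ each contain the label--$m$ edge $\{v,w\}$, so by (ii) the edges $\{a,v\},\{a,w\},\{b,v\},\{b,w\}$ all carry label $2$, while $a,b$ lie in distinct components and are thus not joined. Hence $\{a,b,v,w\}$ induces precisely the configuration of Figure \ref{fig:conditioniii}, contradicting (iii). Therefore every edge of $\Delta$ has label $2$ and $A_\Delta\cong\Z^{|V(\Delta)|}$ is free abelian. Both the minimality of $\Delta$ (to guarantee neighbours on each side) and chordality (to upgrade these to common neighbours) are exactly what make this forbidden configuration appear, which is why I would invest care in their set-up.
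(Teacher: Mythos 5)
Your proof is correct, and in the non-complete case it takes a cleaner route than the paper's, although both arguments ultimately hinge on the same contradiction. The paper starts from the chordal decomposition $\Gamma=\Gamma_1\cup\Gamma_2$ of Lemma \ref{chordalgraph}, notes that the clique $\Delta=\Gamma_1\cap\Gamma_2$ has at most one edge $\{v,w\}$ of label $\neq 2$, and then performs surgery on the decomposition via a case analysis: if $v$ has no neighbour in $\Gamma_2\setminus\Delta$ it drops $v$ from the amalgam; if $w$ has no neighbour in the relevant component $\Gamma_u$ it regroups $\Gamma_u$ with $\Delta\setminus w$; and only in the remaining case does it produce, via repeated chords, common neighbours $a,b$ of $v,w$ on the two sides and hence the forbidden configuration of Figure \ref{fig:conditioniii}. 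You instead preprocess by shrinking $\Delta_0$ to an inclusion-minimal $x$--$y$ separator $\Delta$, whose full-component property guarantees from the outset that \emph{every} vertex of $\Delta$ has neighbours in both $C_x$ and $C_y$; your shortest-path argument (the rigorous form of the paper's ``repeating this argument periodically'') then upgrades these to common neighbours, so any edge of $\Delta$ with label $>2$ immediately yields Figure \ref{fig:conditioniii} and contradicts condition (iii). This buys you two things: the two-level case analysis disappears entirely (the cases the paper must handle separately simply cannot occur for a minimal separator), and you never need the observation that $\Delta$ has at most one bad edge, since your argument kills all bad edges uniformly; in effect you prove the slightly stronger statement that every minimal clique separator of the defining graph of a coherent Artin group has all labels equal to $2$. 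The complete case and the deduction of the amalgam structure from van der Lek's theorem are the same in both proofs.
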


\begin{proof}
Let $\Gamma$ be a labeled finite simplicial graph with conditions (i), (ii) and (iii) from the statement of Theorem \ref{Artincoherence_classification}. Firstly, if $\Gamma$ is complete, then (ii) implies that $\Gamma$ has at most one edge $e$ such that $m_e\neq 2$. Hence, $A_\Gamma$ is the direct product of at most one dihedral Artin group and a free abelian group of finite rank.
Secondly, if $\Gamma$ is not complete, from condition (i) and Lemma \ref{chordalgraph} we get that there are two subgraphs $\Gamma_1, \Gamma_2$ such that $\Gamma= \Gamma_1 \cup \Gamma_2$ and $\Delta \coloneqq \Gamma_1 \cap \Gamma_2$ is complete. The Artin group $A_\Delta$ is also coherent, so $\Delta$ has at most one edge $e=\{w,v\}$ such that $m_e \neq 2$.

If all the labels of $\Delta$ are $2$, then we get the statement. Otherwise, we claim that we may find a decomposition of $\Gamma$, $\Gamma=\hat{\Gamma}_1 \cup \hat{\Gamma}_2$, such that $\hat{\Gamma}_1$ and $\hat{\Gamma}_2$ are subgraphs and $\hat{\Gamma}_1 \cap \hat{\Gamma}_2 \subseteq \Delta$ is a complete subgraph with all the labels equal to $2$.

Indeed, assume first that $v$ is not linked to any vertex in $\Gamma_2 \setminus \Delta$. Then, taking $\hat{\Gamma}_2$ to be $\Gamma_2 \setminus v$ and $\hat{\Gamma}_1$ to be $\Gamma_1$, we do obtain the claim since $\hat{\Gamma}_1 \cap \hat{\Gamma}_2= \Delta \setminus v$ is now a complete subgraph with all the edges labeled by $2$. As a consequence, we may assume that there is a vertex $u$ in $\Gamma_2 \setminus \Delta$ adjacent to $v$ and let us denote by $\Gamma_u$ the connected component of $\Gamma_2 \setminus \Delta$ containing $u$.

We again distinguish two subcases here. Suppose that $w$ is not joined to any vertex of $\Gamma_u$. Then, we might take $\hat{\Gamma}_1$ to be $\Gamma_1 \cup (\Gamma_2 \setminus \Gamma_u)$ and $\hat{\Gamma}_2$ to be $\Gamma_u \cup (\Delta \setminus w)$ so that $\hat{\Gamma}_1 \cap \hat{\Gamma}_2= \Delta \setminus w$ is a complete subgraph with all the edges labeled by $2$.

If there is a vertex $t$ in $\Gamma_u$ adjacent to $w$, then there is a cycle through $v, w, t$ and $u$. We claim that there is some vertex $a$ in $\Gamma_2 \setminus \Delta$ linked to both $v$ and $w$. Condition (i) implies that there is a chord in the cycle through $v, w, t$ and $u$. That chord yields a shorter cycle through $v, w$ and $\Gamma_u$, so repeating this argument periodically, we eventually get the claim.  Finally,  condition (ii) forces the labels between $a$ and $v$ and between $a$ and $w$ to be equal to $2$.

We might repeat the same argument as before replacing $\Gamma_2$ by $\Gamma_1$ and deduce that there is a vertex $b$ in $\Gamma_1 \setminus \Delta$ linked to both $v$ and $w$ with labels equal to $2$. But then the vertices $v, w, a, b$ form a subgraph as in Figure \ref{fig:conditioniii}, which is a contradiction.
\end{proof}

We end up this section by generalizing Theorem \ref{RAAGscoherence_commutator} to the general class of Artin groups, namely:

\begin{theorem}\label{Artincoherence_commutator}
Let $\Gamma$ be a labeled finite simplicial graph. The Artin group associated to $\Gamma$, $A_\Gamma$, is coherent if and only if its derived subgroup is free.    
\end{theorem}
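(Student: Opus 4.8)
The plan is to prove the two implications separately, mirroring the structure of the RAAG proof (Theorem \ref{RAAGscoherence_commutator}) but using the decomposition provided by Proposition \ref{Artincoherent_decomposition} in place of Droms' graph-theoretic input. For the direction that a free derived subgroup forces coherence, I would argue by contrapositive: if $A_\Gamma$ is incoherent, then by Theorem \ref{Artincoherence_classification} one of conditions (i), (ii), (iii) fails, and I want to produce a subgraph $T$ whose Artin group $A_T$ has non-free derived subgroup. Since standard parabolic subgroups are retracts (in the even case) and more generally embed as subgroups with $A_S \cap A_T = A_{S \cap T}$, the derived subgroup $[A_T, A_T]$ embeds into $[A_\Gamma, A_\Gamma]$; as subgroups of free groups are free, it suffices to exhibit one ``forbidden'' subgraph whose derived subgroup contains a non-free subgroup (e.g.\ $\Z^2$, which is not free). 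The candidate offending configurations are: a long cycle (failure of (i)), a complete graph on three or four vertices with two labels $\neq 2$ or with a single high label in a large complete graph (failure of (ii)), and the graph of Figure \ref{fig:conditioniii} (failure of (iii)). For each I would exhibit an explicit element of infinite order whose square/commutators generate a $\Z^2$, contradicting freeness.

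For the converse, that coherence implies the derived subgroup is free, I would induct on the number of vertices of $\Gamma$ using the dichotomy of Proposition \ref{Artincoherent_decomposition}. In the base/complete case, $A_\Gamma = D \times \Z^k$ where $D$ is a (possibly trivial) dihedral Artin group; here the derived subgroup can be computed directly. A dihedral Artin group $A$ with label $2m$ has abelianization $\Z^2$ (even case) and its derived subgroup is known to be free of finite rank, so $[D \times \Z^k, D \times \Z^k] = [D,D]$ is free. In the amalgamated case $A_\Gamma = A_{\Gamma_1} \ast_{A_\Delta} A_{\Gamma_2}$ with $A_\Delta$ free abelian, the plan is to intersect the derived subgroup with the factors and the amalgamated edge group and apply Bass--Serre theory: $[A_\Gamma, A_\Gamma]$ acts on the Bass--Serre tree $T$ of the splitting, the vertex stabilizers are conjugates of $[A_\Gamma,A_\Gamma] \cap A_{\Gamma_i}$ and the edge stabilizers are conjugates of $[A_\Gamma, A_\Gamma] \cap A_\Delta$. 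The edge groups lie in the free abelian group $A_\Delta$, so $[A_\Gamma,A_\Gamma] \cap A_\Delta$ is free abelian, while by induction (since $\Gamma_i$ is a proper coherent subgraph) each $[A_{\Gamma_i}, A_{\Gamma_i}]$ is free.

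The technical heart is therefore showing that the intersection $[A_\Gamma, A_\Gamma] \cap A_{\Gamma_i}$ is free even though it is a priori larger than $[A_{\Gamma_i}, A_{\Gamma_i}]$ --- it consists of elements of $A_{\Gamma_i}$ lying in the derived subgroup of the whole group. To control this I would use the retraction/abelianization structure: the abelianization of $A_\Gamma$ is built from the abelianizations of the $A_{\Gamma_i}$ glued along that of $A_\Delta$, so $[A_\Gamma, A_\Gamma] \cap A_{\Gamma_i}$ is the kernel of the composite $A_{\Gamma_i} \to A_\Gamma \to A_\Gamma^{\mathrm{ab}}$, which is an intermediate normal subgroup sitting between $[A_{\Gamma_i}, A_{\Gamma_i}]$ and $A_{\Gamma_i}$ with free abelian quotient over the former. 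The key claim is that such a subgroup of $A_{\Gamma_i}$ is still free: one writes it as an extension of a subgroup of the free abelian group $A_{\Gamma_i}^{\mathrm{ab}}$ by the free group $[A_{\Gamma_i}, A_{\Gamma_i}]$ and argues that coherence keeps it free, for instance by realizing it as the fundamental group of a graph of free/free-abelian groups whose edge groups are trivial after passing to the derived subgroup. Once all vertex and edge stabilizers are shown to be free with free-abelian edge groups, the final step is to conclude that $[A_\Gamma, A_\Gamma]$ itself is free: since the edge groups are free abelian but the derived subgroup is torsion-free and any free abelian edge stabilizer that is non-cyclic would obstruct freeness, the main obstacle is ruling out $\Z^2$ edge stabilizers inside $[A_\Gamma, A_\Gamma]$. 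I expect this --- verifying that the derived subgroup meets each edge group $A_\Delta$ in at most a cyclic subgroup, so that the graph-of-groups decomposition of $[A_\Gamma, A_\Gamma]$ has free (indeed cyclic or trivial) edge groups and hence yields a free group --- to be the crux of the argument.
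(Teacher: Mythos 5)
Your plan for the direction ``derived subgroup free $\Rightarrow$ coherent'' (equivalently, incoherent $\Rightarrow$ derived subgroup not free) rests on exhibiting a $\Z^2$ inside the derived subgroup of each forbidden subgraph, and this is impossible in the two main cases. Take $\Gamma$ a $5$-cycle with all labels $2$: every rank-two abelian subgroup of this RAAG is conjugate into some edge subgroup $\langle v_i,v_{i+1}\rangle\cong\Z^2$ (by the structure of centralizers in RAAGs), and the abelianization map is injective on each such subgroup, so $[A_\Gamma,A_\Gamma]$ meets every $\Z^2$ of $A_\Gamma$ in a cyclic group; hence the derived subgroup contains no $\Z^2$ whatsoever, although by Theorem \ref{RAAGscoherence_commutator} it is not free (it contains surface groups). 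The same phenomenon occurs for non-spherical triangles such as $(3,3,3)$. This is why the paper argues via commuting elements only for Figure \ref{fig:conditioniii}: for cycles and non-spherical triangles it proves non-freeness homologically (Lemma \ref{lem:aspherical}: a finite aspherical presentation with $|X|\le|R|$ and torsion-free abelian quotient forces $H_2(N)\neq 0$), using the $K(\pi,1)$-conjecture for $2$-dimensional Artin groups to obtain asphericity of the presentation complex, and it handles the spherical triangles $(2,3,3)$, $(2,3,5)$ by a cohomological-dimension count and $(2,3,4)$ by citing that its derived subgroup is finitely generated but not finitely presented. Your proposal has no substitute for these arguments, and no argument based on finding $\Z^2$ subgroups can work here.

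In the converse direction your skeleton (induction plus the splitting of Proposition \ref{Artincoherent_decomposition} plus Bass--Serre theory) is the paper's, but the two steps you yourself flag as the ``technical heart'' are resolved incorrectly. First, an intermediate subgroup $[A_{\Gamma_i},A_{\Gamma_i}]\le K\le A_{\Gamma_i}$ with $K/[A_{\Gamma_i},A_{\Gamma_i}]$ free abelian need not be free, coherence notwithstanding ($F_2\times\Z$ is coherent and free-by-abelian but not free); what is actually true, and what the paper proves as Lemma \ref{Artin_parabolic_commutator}, is the exact equality $[A_\Gamma,A_\Gamma]\cap A_{\Gamma_i}=[A_{\Gamma_i},A_{\Gamma_i}]$, whose proof needs the description of the abelianization via odd-labelled components (Lemma \ref{lem:abelianization}) together with chordality and condition (ii) of Theorem \ref{Artincoherence_classification}: a shortest odd-labelled path leaving and re-entering $\Omega$ would close up to a cycle with all but one label odd, which triangulates to a triangle with two odd labels, contradicting coherence. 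Second, your closing inference ``free vertex groups and cyclic edge groups yield a free group'' is false: $\langle a,b\mid a^2=b^3\rangle=\Z\ast_{\Z}\Z$ has free vertex groups and cyclic edge group and is not free. The Karrass--Solitar lemma the paper invokes (Lemma \ref{freeamalgamated}) requires the normal subgroup to intersect the amalgamated subgroup \emph{trivially}, and this is exactly what Lemma \ref{Artin_parabolic_commutator} delivers: $[A_\Gamma,A_\Gamma]\cap A_\Delta=[A_\Delta,A_\Delta]=1$ because $A_\Delta$ is abelian. So your stated target of ``at most cyclic'' edge intersections is both unproven in your sketch and, even if proven, insufficient to conclude freeness.
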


We first prove that if an Artin group is coherent, then its derived subgroup is free.

For arbitrary RAAGs, it is well-known that the abelianization of a standard parabolic subgroup coincides with its intersection with the abelianization of the ambient group (see \cite[Section 4]{DromsServatiusServatius}). We first extend this fact to the class of even Artin groups.

\begin{notation}
If $\Gamma$ is a labeled finite simplicial graph, let us denote by $\Gamma^{odd}$ the graph obtained from $\Gamma$ by deleting the edges with even label.    
\end{notation}

\begin{proposition}\label{EvenArtin_commutator}
Let $A_\Gamma$ be an even Artin group and let $\Omega \subseteq \Gamma$ be a subgraph. Then $[A_\Omega, A_\Omega]= A_\Omega \cap [A_\Gamma, A_\Gamma]$.    
\end{proposition}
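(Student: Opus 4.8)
The plan is to deduce the non-trivial inclusion directly from the retraction property of even Artin groups recalled in Subsection~\ref{subsec:Artingroups}. One inclusion is formal: since $A_\Omega \leq A_\Gamma$, any commutator $[x,y]$ with $x,y \in A_\Omega$ is also a commutator in $A_\Gamma$, so $[A_\Omega, A_\Omega] \subseteq A_\Omega \cap [A_\Gamma, A_\Gamma]$. The content is the reverse inclusion $A_\Omega \cap [A_\Gamma, A_\Gamma] \subseteq [A_\Omega, A_\Omega]$.

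For this I would use the retraction $\pi\colon A_\Gamma \to A_\Omega$ sending each vertex of $\Omega$ to itself and each vertex outside $\Omega$ to $1$; because $A_\Gamma$ is even this is a well-defined homomorphism and satisfies $\pi \circ \iota = \id_{A_\Omega}$ for the inclusion $\iota\colon A_\Omega \to A_\Gamma$. Being a homomorphism onto $A_\Omega$, the map $\pi$ carries $[A_\Gamma, A_\Gamma]$ onto $[A_\Omega, A_\Omega]$. Now take $g \in A_\Omega \cap [A_\Gamma, A_\Gamma]$. On the one hand $g \in A_\Omega$ forces $\pi(g) = g$; on the other hand $g \in [A_\Gamma, A_\Gamma]$ gives $\pi(g) \in [A_\Omega, A_\Omega]$. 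Combining, $g = \pi(g) \in [A_\Omega, A_\Omega]$, which is exactly what is needed.

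An equivalent route, which makes the role of evenness transparent, is to pass to abelianizations. For an even Artin group every defining relation $(vu)^{k}=(uv)^{k}$ (where $m_e=2k$) becomes trivial after abelianizing, so $A_\Gamma^{ab}$ is free abelian on $V(\Gamma)$ and $A_\Omega^{ab}$ is free abelian on $V(\Omega)$, with the natural map $A_\Omega^{ab} \to A_\Gamma^{ab}$ being the injection of the coordinate summand indexed by $V(\Omega)$. An element $g \in A_\Omega$ lying in $[A_\Gamma, A_\Gamma]$ has trivial image in $A_\Gamma^{ab}$, hence trivial image in $A_\Omega^{ab}$ by injectivity, i.e.\ $g \in [A_\Omega, A_\Omega]$. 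In either approach there is no real obstacle beyond the two structural facts already available in the paper; the only point requiring care—and the place where evenness is indispensable—is the well-definedness of the retraction (equivalently, the triviality of the abelianized relations): for an odd label $m_e$ the assignment $u \mapsto 1$ on a neighbour $u$ of $v$ would force the surviving generator $v$ to be trivial, so the argument genuinely requires all labels to be even.
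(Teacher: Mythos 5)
Your proof is correct, and your primary argument takes a genuinely different route from the paper's. The paper's own proof is essentially your second, ``equivalent'' route: it takes $\Delta$ to be the complete graph on the vertices of $\Gamma$ with all labels $2$, notes that evenness makes the natural map $\varphi\colon A_\Gamma\to A_\Delta$ exactly the abelianization map (so $[A_\Gamma,A_\Gamma]=\ker\varphi$), and then observes that the restriction $\varphi|_{A_\Omega}$ is likewise the abelianization map of $A_\Omega$, whence $[A_\Omega,A_\Omega]=\ker\varphi\cap A_\Omega=[A_\Gamma,A_\Gamma]\cap A_\Omega$. Your first route instead invokes only the retraction $\pi\colon A_\Gamma\to A_\Omega$ recalled in Subsection \ref{subsec:Artingroups}, combined with the purely group-theoretic fact that any retract $H$ of a group $G$ satisfies $H\cap[G,G]=[H,H]$ (since $g\in H\cap[G,G]$ gives $g=\pi(g)\in\pi([G,G])\subseteq[H,H]$). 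This is shorter and slightly more general: it applies verbatim to any retract, and it cleanly isolates where evenness enters, namely in the well-definedness of $\pi$, which fails for odd labels exactly as you point out. What the paper's formulation buys in exchange is an explicit identification of the abelianization of an even Artin group as the free abelian group on the vertex set, a computation the authors later reuse and generalize to arbitrary Artin groups in Lemma \ref{lem:abelianization}, where the abelianization is no longer free abelian on the vertices and the retraction argument would not transfer.
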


\begin{proof}
The inclusion $[A_\Omega, A_\Omega]\subseteq  A_\Omega \cap [A_\Gamma, A_\Gamma]$ is obvious, so let us focus on the other inclusion.

Let $\Delta$ be the complete labeled simplicial graph with the same number of vertices as $\Gamma$ and such that all the labels are $2$. As we are assuming that $A_\Gamma$ is even, there is a well-defined epimorphism $\varphi\colon A_\Gamma\to A_\Delta$ which is precisely the abelianization map. Therefore $[A_\Gamma, A_\Gamma]= \ker \varphi$. But using again the fact that $A_\Gamma$ is even we deduce that the restriction of $\phi$ to $A_\Omega$ is also the abelianization map of $A_\Omega$, so
\[ [A_\Omega, A_\Omega]= \ker \varphi_{|A_\Omega} = \ker \varphi \cap A_\Omega= [A_\Gamma, A_\Gamma] \cap A_\Omega.\]
\end{proof}

We furthermore show that if $A_\Gamma$ is a coherent Artin group with the decomposition as in Proposition \ref{Artincoherent_decomposition}, then the conclusion of Proposition \ref{EvenArtin_commutator} still remains true for the standard parabolic subgroups $A_{\Gamma_1}, A_{\Gamma_2}$ and $A_{\Delta}$. But before that, it will be useful to state the following (probably well-known) result that describes the abelianization of an arbitrary Artin group.

\begin{lemma}\label{lem:abelianization} Let $A_\Gamma$ be an Artin group and let $Z$ be the complete labeled simplicial graph with the same number of vertices as connected components in $\Gamma^{odd}$ and such that all the labels are $2$. Then 
$$A_\Gamma \slash [A_\Gamma,A_\Gamma]\cong A_Z.$$
\end{lemma}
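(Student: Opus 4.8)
The plan is to compute the abelianization $A_\Gamma/[A_\Gamma,A_\Gamma]$ directly from the Artin presentation and show the resulting finitely generated abelian group is free of the stated rank, matching $A_Z\cong\Z^k$ where $k$ is the number of connected components of $\Gamma^{odd}$. First I would abelianize the presentation: the generators are the vertices $v\in V$, and each defining relation $vu\cdots=uv\cdots$ (alternating of length $m_e$) becomes, in the abelianization, a relation whose exponent vector I need to record. The key computation is bookkeeping the exponent sums. In the free abelian group on $V$, the left word $vu\buildrel{m_e}\over\ldots$ and the right word $uv\buildrel{m_e}\over\ldots$ each use $u$ and $v$ a total of $m_e$ times between them; the difference of the two sides gives a single relation of the form $(\text{exponent of }u)\,u=(\text{exponent of }v)\,v$ up to sign.

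The crucial case distinction is the parity of $m_e$. When $m_e$ is even, say $m_e=2k$, both alternating words contain exactly $k$ copies of $u$ and $k$ copies of $v$, so the relation is trivial in the abelianization — it imposes no condition relating $u$ and $v$. When $m_e$ is odd, say $m_e=2k+1$, one side has $k+1$ copies of the starting letter and $k$ of the other, while the other side has these reversed, so abelianizing yields $u=v$ (the relation $(k+1)v+k u=(k+1)u+k v$, i.e.\ $v=u$). Thus in $A_\Gamma/[A_\Gamma,A_\Gamma]$, exactly the odd-labeled edges force the endpoints to become equal, and even-labeled edges impose nothing. This is precisely the content of passing to $\Gamma^{odd}$: two vertices are identified in the abelianization if and only if they lie in the same connected component of $\Gamma^{odd}$.

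With this computation in hand I would conclude as follows. The abelianization is the free abelian group on $V$ modulo the relations identifying $u=v$ for every odd edge $\{u,v\}$. These relations generate exactly the equivalence relation of lying in the same connected component of $\Gamma^{odd}$, so the quotient is free abelian on the set of connected components of $\Gamma^{odd}$. Since $Z$ is the complete graph with all labels $2$ on one vertex per such component, $A_Z$ is itself free abelian of rank equal to the number of components (every edge label being $2$ makes $A_Z$ a RAAG on a complete graph, hence $\Z^k$). Matching ranks and noting both groups are free abelian gives the isomorphism $A_\Gamma/[A_\Gamma,A_\Gamma]\cong A_Z$.

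The only genuinely delicate step is the exponent-count in the odd case, where one must be careful that the alternating word of odd length starting with $v$ has one more $v$ than $u$, and confirm that the resulting abelianized relation is $u=v$ rather than some multiple; beyond that the argument is routine, as the even-case relations vanish and connected components of $\Gamma^{odd}$ collect exactly the forced identifications.
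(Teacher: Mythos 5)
Your proof is correct, but it takes a different route from the paper's. You abelianize the presentation directly: you add commutators as relators, record exponent vectors, and observe that an even-labeled relation $vu\buildrel{2k}\over\ldots=uv\buildrel{2k}\over\ldots$ abelianizes to nothing (each side has $k$ copies of each letter), while an odd-labeled relation abelianizes to $u=v$; the quotient of $\Z^{V}$ by the subgroup generated by the differences $u-v$ over odd edges is then free abelian on the connected components of $\Gamma^{odd}$, which has the same rank as $A_Z\cong\Z^{k}$. Your exponent count is right in both parity cases, and the reduction of the equivalence relation to components of $\Gamma^{odd}$ is sound. The paper instead constructs the explicit homomorphism $\varphi\colon A_\Gamma\to A_Z$ sending each vertex to the vertex of $Z$ indexing its component of $\Gamma^{odd}$, checks it respects the Artin relations, and then verifies the universal property: any homomorphism from $A_\Gamma$ to an abelian group must identify endpoints of odd edges, hence factors through $\varphi$. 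The two arguments are essentially dual, but the paper's version buys something your rank-matching conclusion obscures: it exhibits $\varphi$ itself as \emph{the} abelianization map with $[A_\Gamma,A_\Gamma]=\ker\varphi$, and this explicit description is what gets reused in the proof of Lemma \ref{Artin_parabolic_commutator}, where one needs to restrict $\varphi$ to standard parabolic subgroups and compare components of $\Omega^{odd}$ with components of $\Gamma^{odd}$. If you wanted your proof to serve the same purpose, you would note that your identification of the abelianization with $\Z^{\text{components}}$ is induced by exactly this vertex-to-component map, not merely by an abstract isomorphism of free abelian groups of equal rank.
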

\begin{proof} For each vertex $v$ of $\Gamma$, let $\varphi(v)$ be the vertex in $Z$ corresponding to the connected component of $\Gamma^{odd}$ that contains $v$. Then there is a well-defined group homomorphism
$$\begin{aligned}\varphi\colon A_\Gamma&\to A_Z\\
v&\mapsto\varphi(v).\\
\end{aligned}$$
Indeed, note that every defining relation in $A_\Gamma$ corresponds to an edge in $\Gamma$ between two vertices, say $v$ and $w$. If $\varphi(v)=\varphi(w)$, then the relation is obviously preserved, and this is always the case if the label is odd. If $\varphi(v)\neq\varphi(w)$, then the label must be even and, by the definition of $Z$, $\varphi(v)\varphi(w)=\varphi(w)\varphi(v)$. Hence, the relation is also preserved in this case. Moreover, $\varphi$ is obviously surjective. Given any map $\psi\colon A_\Gamma\to A$ where $A$ is an abelian group, if $v$ and $w$ are vertices in $\Gamma$ linked by an edge with odd label, the fact that $A$ is abelian implies that $\psi(v)=\psi(w)$. Hence, the same holds true for all the vertices in the same connected component of  $\Gamma^{odd}$, and this means that $\psi$ factors through $\varphi$.
\end{proof}

\begin{lemma}\label{Artin_parabolic_commutator}
Let $A_\Gamma$ be a coherent Artin group such that $\Gamma$ is not complete and let us consider the decomposition
\[ A_\Gamma= A_{\Gamma_1} \ast_{A_\Delta} A_{\Gamma_2}\]
as in Proposition \ref{Artincoherent_decomposition}. Then $[A_\Omega, A_\Omega]= A_\Omega \cap [A_\Gamma, A_\Gamma]$ where $\Omega$ is either $\Gamma_1, \Gamma_2$ or $\Delta$.
\end{lemma}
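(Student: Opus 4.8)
```latex
The plan is to reduce the statement to Proposition \ref{EvenArtin_commutator}, which already handles the even case, by carefully analyzing which edges can carry non-$2$ labels in the graphs $\Gamma_1, \Gamma_2, \Delta$ arising from the decomposition in Proposition \ref{Artincoherent_decomposition}. The inclusion $[A_\Omega, A_\Omega] \subseteq A_\Omega \cap [A_\Gamma, A_\Gamma]$ is immediate in all cases, so the work is entirely in the reverse inclusion. The key structural input is that, by Proposition \ref{Artincoherent_decomposition} and its proof, $\Delta$ is complete with all labels equal to $2$, so $A_\Delta$ is free abelian, and by condition (ii) of Theorem \ref{Artincoherence_classification} each of $\Gamma_1$ and $\Gamma_2$ can contain at most very few edges with label different from $2$.

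First I would dispose of $\Omega = \Delta$: since $A_\Delta$ is free abelian, $[A_\Delta, A_\Delta] = 1$, so the claim reduces to showing $A_\Delta \cap [A_\Gamma, A_\Gamma] = 1$, i.e. that $A_\Delta$ injects into the abelianization $A_\Gamma / [A_\Gamma, A_\Gamma]$. For this I would invoke Lemma \ref{lem:abelianization}: the abelianization is $A_Z$, indexed by connected components of $\Gamma^{odd}$, and since $\Delta$ has only even labels, no two distinct vertices of $\Delta$ lie in the same component of $\Gamma^{odd}$ (an edge of $\Delta$ is even, hence deleted in passing to $\Gamma^{odd}$, and one must check the vertices are not joined through some other path of odd edges — this requires that distinct vertices of the complete graph $\Delta$ map to distinct generators of $A_Z$). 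The composite $A_\Delta \to A_\Gamma \to A_Z$ then sends the free abelian generators of $A_\Delta$ to distinct generators of the free abelian group $A_Z$, hence is injective, giving $A_\Delta \cap \ker \varphi = 1$.

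Next, for $\Omega = \Gamma_i$ ($i = 1, 2$), the idea is that the only obstruction to running the even-case argument of Proposition \ref{EvenArtin_commutator} verbatim is the presence of an edge with odd or larger label inside $\Gamma_i$. By condition (ii), any such exceptional edge cannot sit inside a triangle with two other label-$2$ edges in the relevant complete subgraphs, which tightly constrains its interaction with $\Delta$. I would argue that the abelianization map $\varphi \colon A_\Gamma \to A_Z$ restricted to $A_{\Gamma_i}$ coincides with the abelianization map of $A_{\Gamma_i}$ itself, which amounts to checking that two vertices of $\Gamma_i$ lie in the same component of $(\Gamma_i)^{odd}$ if and only if they lie in the same component of $\Gamma^{odd}$. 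The nontrivial direction is that passing to the larger graph $\Gamma$ does not merge two components of $(\Gamma_i)^{odd}$; a merging path would have to exit $\Gamma_i$, cross $\Delta$, and re-enter, and I would use the completeness of $\Delta$ with all-$2$ labels together with conditions (i)--(iii) to rule this out. Once the restricted abelianization of $A_{\Gamma_i}$ is identified, Lemma \ref{lem:abelianization} yields $\ker(\varphi|_{A_{\Gamma_i}}) = [A_{\Gamma_i}, A_{\Gamma_i}]$, and since $\ker \varphi = [A_\Gamma, A_\Gamma]$ this gives $A_{\Gamma_i} \cap [A_\Gamma, A_\Gamma] = [A_{\Gamma_i}, A_{\Gamma_i}]$ as desired.

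I expect the main obstacle to be exactly the component-matching argument for $\Gamma_i$: controlling how odd-labeled edges can propagate through $\Delta$ to connect vertices that are separated in $(\Gamma_i)^{odd}$. Because $\Delta$ is complete with all even labels, any path of odd edges realizing such a connection must enter and leave $\Delta$ at its vertices while using odd edges strictly outside $\Delta$, and I would need to show the coherence conditions (particularly (ii) and the forbidden configuration in Figure \ref{fig:conditioniii} via (iii)) prohibit the odd edges incident to $\Delta$-vertices from bridging distinct $(\Gamma_i)^{odd}$-components. If this combinatorial sealing-off of $\Delta$ turns out to be delicate, a cleaner alternative would be to use the retraction $\pi \colon A_\Gamma \to A_{\Gamma_i}$ available for even parabolic subgroups, or more generally to exploit that $A_{\Gamma_i}$ is a retract in the amalgamated product; combined with Proposition \ref{EvenArtin_commutator} applied after localizing to the even part, this should let me conclude without tracking odd-edge paths explicitly.
```
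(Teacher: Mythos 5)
Your overall framing coincides with the paper's: both reduce the lemma to showing that the restriction to $A_\Omega$ of the abelianization map $\varphi\colon A_\Gamma\to A_Z$ from Lemma \ref{lem:abelianization} is itself the abelianization map of $A_\Omega$, which amounts to the combinatorial claim that the connected components of $\Omega^{odd}$ are exactly the intersections with $\Omega$ of the components of $\Gamma^{odd}$. The problem is that you never prove this claim: you flag it twice (``one must check the vertices are not joined through some other path of odd edges''; ``I would need to show the coherence conditions \dots prohibit the odd edges \dots from bridging'') and defer it, but this component-matching statement \emph{is} the entire content of the lemma. The paper's argument for it is short but uses all the structure at once: take a \emph{shortest} path $p$ in $\Gamma$ with all labels odd joining two vertices $v,w\in\Omega$ lying in distinct components of $\Omega^{odd}$; such a path must leave $\Omega$, and since $\Gamma_1$ and $\Gamma_2$ meet only along $\Delta$, minimality of $p$ forces its endpoints $v$ and $w$ to lie in $\Delta$; completeness of $\Delta$ (with all labels $2$) then closes $p$ into a simple cycle having exactly one even-labeled edge, namely $\{v,w\}$; chordality (condition (i)) lets one triangulate this cycle, and an induction on its length produces a triangle with two odd labels, contradicting condition (ii). Note that condition (iii) (the configuration of Figure \ref{fig:conditioniii}), which you invoke, plays no role here; what is needed is exactly (i) and (ii) together with the minimality trick that pushes the endpoints of the bad path into $\Delta$ --- without that trick you cannot close the cycle, since two vertices of $\Gamma_i$ need not be adjacent.

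Separately, your proposed ``cleaner alternative'' fails. For $A_{\Gamma_i}$ to be a retract of $A_\Gamma$ (or of the amalgam) via the vertex-killing map, every edge from a vertex outside $\Gamma_i$ to a vertex of $\Gamma_i$ must have even label, and this can fail in the present setting: take $\Gamma$ to be the path with vertices $a,v,b$, edge $\{a,v\}$ labeled $3$ and edge $\{v,b\}$ labeled $2$. This graph is coherent, with $\Gamma_1=\{a,v\}$, $\Gamma_2=\{v,b\}$, $\Delta=\{v\}$, but killing $a$ in the relation $ava=vav$ forces $v=1$, so $A_{\Gamma_2}$ is not a retract of $A_\Gamma$ and there is no retraction $A_{\Gamma_1}\to A_\Delta$ either. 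Retractions onto standard parabolics are a feature of \emph{even} Artin groups --- that is what the paper exploits in Proposition \ref{EvenArtin_commutator} --- whereas this lemma is precisely about the case where odd labels may occur. A further small inaccuracy: condition (ii) does not imply that $\Gamma_1,\Gamma_2$ contain ``at most very few'' edges with label different from $2$; it only constrains complete subgraphs on $3$ or $4$ vertices, and for instance a tree all of whose edges are labeled $3$ is coherent. Nothing in your argument hinges on that remark, but the real gap --- the unproved bridging claim --- is exactly where the proof lives.
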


\begin{proof}
The inclusion $[A_\Omega, A_\Omega]\subseteq  A_\Omega \cap [A_\Gamma, A_\Gamma]$ is obvious. Consider the map $\varphi$ defined in Lemma \ref{lem:abelianization}. The proof of that result implies that $\varphi$ is the abelianization map and that $[A_\Gamma, A_\Gamma]= \ker \varphi$. It remains to show that $\varphi_{|A_\Omega}$ is the abelianization map of the standard parabolic subgroup $A_\Omega$. For that aim, it suffices to show that the connected components of $\Omega^{odd}$ are precisely the intersections of the connected components of $\Gamma^{odd}$ with $\Omega$.

Assume, by contradiction, that this is not the case, so that there are vertices $v$ and $w$ in $\Omega$ lying in different connected components of $\Omega^{odd}$ but such that there is a path $p$ in $\Gamma$ from $v$ to $w$ where the labels of the edges in the path are odd. Without loss of generality, we further assume that $p$ is the shortest path with these properties, that is, $p$ is the shortest path in $\Gamma$ with all labels odd joining two vertices of $\Omega$ in different connected components of $\Omega^{odd}$. Since we are assuming that $v$ and $w$ lie in different connected components of $\Omega^{odd}$, at least one vertex $u$ of the path $p$ does not belong to $\Omega$. Moreover, the fact that the vertices in the subgraph $\Gamma_1$ are connected to vertices in $\Gamma_2$ only through $\Delta$ implies that there must be some vertex $x$ in $p$ between $v$ and $u$ that lies in $\Delta$ and, in fact, the choice of $p$ implies that $x=v$. Similarly, there must be some vertex $y$ that belongs to $\Delta$ between $w$ and $u$. As we are assuming that $v$ and $w$ are not connected by a path with only edges with odd labels inside $\Omega$, it is not possible to have that $x$ equals $y$. Actually, again by the choice of the path $p$, we must have that $w=y$.

The graph $\Delta$ is complete, so $v$ and $w$ are adjacent and we have a cycle with all labels but one odd. As $\Gamma$ is chordal, by triangulating this cycle we would eventually get a triangle with two edges labeled by odd numbers, and this is a contradiction because $A_ \Gamma$ is coherent. 
\end{proof}

\begin{example}
The previous result does not hold for arbitrary standard parabolic subgroups, not even in the case of coherent Artin groups.

Consider, for instance, the graph $\Gamma$ with 3 vertices $v,u,w$ and two edges, $\{u,v\}$ and $\{v,w\}$ both labeled by $3$. Then, from the relation $vuv=uvu$ we obtain that \[[v,u]=uv^{-1}\in[A_\Gamma,A_\Gamma],\] and similarly, $vw^{-1}\in[A_\Gamma,A_\Gamma]$.

As a consequence, the element $uw^{-1}$ lies in the intersection $[A_\Gamma,A_\Gamma]\cap A_{\Omega}$, where $\Omega$ is the subgraph with vertices $u$ and $w$. But $A_{\Omega}$ is a free group and $uw^{-1}$ does not belong to $[A_\Omega,A_\Omega]$.
\end{example}

We will also need the following well-known result.

\begin{lemma}\cite{KarrassSolitar}\label{freeamalgamated} Let $G=G_1\ast_H G_2$ be a free amalgamated product and let $1\neq N\trianglelefteq G$ be a normal subgroup such that $N\cap H=1$ and both intersections $N\cap G_1$, $N\cap G_2$ are free or trivial. Then the subgroup $N$ is free.
\end{lemma}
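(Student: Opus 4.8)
The plan is to run the standard Bass--Serre theory argument for groups acting on trees with trivial edge stabilizers. Let $T$ be the Bass--Serre tree associated to the splitting $G=G_1\ast_H G_2$; recall that $G$ acts on $T$ without inversions, that the vertex stabilizers are exactly the conjugates $gG_1g^{-1}$ and $gG_2g^{-1}$, and that the edge stabilizers are exactly the conjugates $gHg^{-1}$, for $g\in G$. I would then study the restricted action of $N$ on $T$.

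The key step is to compute the stabilizers of the $N$-action using that $N\normal G$. For any $g\in G$, if $x\in N\cap gHg^{-1}$, then $g^{-1}xg$ lies in $H$ and, by normality of $N$, also in $N$; hence $g^{-1}xg\in N\cap H=1$ and so $x=1$. Thus every edge stabilizer of the $N$-action is trivial. The same manipulation gives $N\cap gG_ig^{-1}=g(N\cap G_i)g^{-1}$ for $i=1,2$, so each vertex stabilizer of the $N$-action is conjugate in $G$ to $N\cap G_1$ or to $N\cap G_2$, and is therefore free or trivial by hypothesis.

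Now I would invoke the structure theorem of Bass--Serre theory. Since $N$ acts without inversions on the tree $T$ with all edge stabilizers trivial, $N$ is the fundamental group of a graph of groups over the quotient graph $Y=N\backslash T$ in which every edge group is trivial. Such a fundamental group is the free product of the vertex groups with a free group (of rank equal to the rank of the topological fundamental group of $Y$, that is, the number of edges lying outside a chosen spanning tree). Concretely,
\[
N\;\cong\;\Bigl(\ast_{v\in V(Y)}N_v\Bigr)\ast F,
\]
where each $N_v$ is a vertex stabilizer of the $N$-action and $F$ is free. By the previous paragraph every $N_v$ is free or trivial, and the free product of free groups is again free (the free product of free groups on generating sets $S_i$ is free on $\bigsqcup_i S_i$). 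Hence $N$ is a free product of free groups together with a free group, and therefore $N$ is free, as claimed.

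The only delicate point is ensuring that the hypotheses, stated only for $H$, $G_1$ and $G_2$, control all the stabilizers of the $N$-action on $T$, which range over conjugates of these subgroups. This is precisely what the normality of $N$ provides through the conjugation identities above: it promotes the single assumption $N\cap H=1$ to the vanishing of every edge stabilizer, and likewise transports freeness of $N\cap G_i$ to all the vertex stabilizers. Everything else is a direct application of the standard theory.
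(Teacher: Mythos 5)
Your proof is correct and complete. Note, though, that the paper itself offers no argument for this lemma: it is stated as a citation to Karrass--Solitar, whose original 1970 proof proceeds via their combinatorial subgroup theorem for amalgamated products (a Kurosh-style double-coset analysis predating Bass--Serre theory). What you have written is the modern replacement for that argument, and every step checks out: normality of $N$ correctly converts the single hypothesis $N\cap H=1$ into triviality of all edge stabilizers via $N\cap gHg^{-1}=g\bigl(N\cap H\bigr)g^{-1}=1$, and likewise gives $N\cap gG_ig^{-1}=g(N\cap G_i)g^{-1}$, so all vertex stabilizers of the restricted action are free or trivial; the action is without inversions because the Bass--Serre tree is bipartite with $G$ preserving the bipartition; and the structure theorem then exhibits $N$ as a free product of its vertex stabilizers with a free group, which is free since a free product of free groups is free. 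One small remark: the quotient graph $N\backslash T$ may be infinite, so the free product and the free factor $F$ may have infinite rank, but the structure theorem and the closure of freeness under arbitrary free products hold in that generality, so nothing breaks. Your argument is also marginally more general than the stated lemma, since it nowhere uses $N\neq 1$ (the trivial group being free of rank $0$); the hypothesis is an artifact of the original Karrass--Solitar formulation. In short, where the paper buys the lemma by citation, you have supplied a self-contained proof by the now-standard route, and the two are equivalent in substance.
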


At this point we have all the necessary tools to prove one of the implications of Theorem \ref{Artincoherence_commutator}. We begin with the special case of dihedral Artin groups.

\begin{example}\label{ex:dihedral} Dihedral Artin groups are well-known to be free-by-cyclic: in fact this is also a way to see that they are coherent \cite{FeighnHandel}. Being free-by-cyclic implies that the derived subgroup is free. This is also a consequence of the explicit computations performed on
\cite[Theorems 3.6, 3.9 and 3.18]{MulhollandRolfsen}.
\end{example}

\begin{theorem} Let $A_\Gamma$ be a coherent Artin group. Then the derived subgroup $[A_\Gamma, A_\Gamma]$ is free or trivial. 
\end{theorem}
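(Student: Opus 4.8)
The plan is to proceed by induction on the number of vertices of $\Gamma$, using the structural dichotomy of Proposition \ref{Artincoherent_decomposition} together with the amalgamated-product criterion of Lemma \ref{freeamalgamated}. The base case reduces to the situation where $\Gamma$ is complete: by Proposition \ref{Artincoherent_decomposition}, $A_\Gamma$ is then the direct product of a (possibly trivial) dihedral Artin group and a free abelian group. Since the derived subgroup of a direct product is contained in the product of the derived subgroups of the factors and the free abelian factor contributes nothing, $[A_\Gamma, A_\Gamma]$ coincides with the derived subgroup of the dihedral factor, which is free (or trivial) by Example \ref{ex:dihedral}. This settles the complete case.

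For the inductive step, suppose $\Gamma$ is not complete. By Proposition \ref{Artincoherent_decomposition} we obtain a splitting
\[A_\Gamma = A_{\Gamma_1} \ast_{A_\Delta} A_{\Gamma_2},\]
where $\Delta = \Gamma_1 \cap \Gamma_2$ and $A_\Delta$ is free abelian. Writing $N = [A_\Gamma, A_\Gamma]$, I would verify the three hypotheses of Lemma \ref{freeamalgamated}. First, $N$ is normal in $A_\Gamma$ by construction. Second, I need $N \cap A_\Delta = 1$: since $A_\Delta$ is free abelian, it equals its own abelianization and injects into the abelianization of $A_\Gamma$, so $A_\Delta \cap [A_\Gamma, A_\Gamma] = [A_\Delta, A_\Delta] = 1$ --- this is exactly the content of Lemma \ref{Artin_parabolic_commutator} applied to $\Omega = \Delta$. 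Third, I must show the intersections $N \cap A_{\Gamma_i}$ are free or trivial for $i = 1, 2$. Again by Lemma \ref{Artin_parabolic_commutator},
\[N \cap A_{\Gamma_i} = [A_\Gamma, A_\Gamma] \cap A_{\Gamma_i} = [A_{\Gamma_i}, A_{\Gamma_i}],\]
and since $\Gamma_i$ is a proper (induced) subgraph of $\Gamma$ with strictly fewer vertices, and coherence passes to standard parabolic subgroups, the inductive hypothesis applies to $A_{\Gamma_i}$ to give that $[A_{\Gamma_i}, A_{\Gamma_i}]$ is free or trivial. With all three hypotheses in hand, Lemma \ref{freeamalgamated} yields that $N = [A_\Gamma, A_\Gamma]$ is free, completing the induction.

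The step I expect to be the main obstacle is the identification $N \cap A_{\Gamma_i} = [A_{\Gamma_i}, A_{\Gamma_i}]$, which is precisely where Lemma \ref{Artin_parabolic_commutator} is indispensable: a priori the intersection of the global derived subgroup with a parabolic subgroup could be strictly larger than the parabolic's own derived subgroup (the Example following Lemma \ref{Artin_parabolic_commutator} shows this failure for general parabolics), so it is essential that the specific subgraphs $\Gamma_1, \Gamma_2, \Delta$ arising from Proposition \ref{Artincoherent_decomposition} are well-behaved. The coherence of $A_\Gamma$ (via chordality and the label conditions) is used twice here --- once to produce the splitting and once, through Lemma \ref{Artin_parabolic_commutator}, to control the commutator intersections --- and keeping track of which coherent hypothesis feeds which conclusion is the delicate bookkeeping that the argument rests upon. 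I would also take care to note that coherence is inherited by $A_{\Gamma_i}$ since conditions (i)--(iii) of Theorem \ref{Artincoherence_classification} are manifestly preserved under passing to induced subgraphs, so the inductive hypothesis legitimately applies.
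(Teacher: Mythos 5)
Your proposal is correct and follows essentially the same route as the paper: induction on the number of vertices, the dichotomy of Proposition \ref{Artincoherent_decomposition}, the identification of commutator intersections via Lemma \ref{Artin_parabolic_commutator}, and the conclusion via Lemma \ref{freeamalgamated}. Your added remarks (that coherence passes to induced subgraphs, and that the dihedral-times-abelian case reduces to the dihedral factor) only make explicit what the paper leaves implicit.
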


\begin{proof} If $A_\Gamma$ is abelian, $[A_\Gamma, A_\Gamma]$ is trivial.
If $A_\Gamma$ is a non abelian coherent Artin group, from Proposition \ref{Artincoherent_decomposition} we know that either $A_\Gamma$ is the direct product of a dihedral Artin group and a free abelian group or it has a decomposition as in the second bullet point. In the first case, the result is trivial by Example \ref{ex:dihedral}.

In the second case, let us assume that $A_\Gamma$ decomposes as 
\[A_\Gamma=A_{\Gamma_1}\ast_{A_\Delta}A_{\Gamma_2},\]
with $\Gamma_1,\Gamma_1,\Delta$ subgraphs of $\Gamma$, $\Delta=\Gamma_1\cap\Gamma_2$ and $A_\Delta$ is a free abelian group.

Lemma \ref{Artin_parabolic_commutator} implies that $[A_\Gamma, A_\Gamma]\cap A_\Delta= [A_\Delta, A_\Delta]=1$. Moreover, by induction on the number of vertices we may suppose that $[A_{\Gamma_1},A_{\Gamma_1}]=[A_\Gamma, A_\Gamma]\cap A_{\Gamma_1}$ and $[A_{\Gamma_2},A_{\Gamma_2}]=[A_\Gamma, A_\Gamma]\cap A_{\Gamma_2}$ are either free or trivial groups. In conclusion, from Lemma \ref{freeamalgamated} we deduce that the derived subgroup $[A_\Gamma, A_\Gamma]$ is also free.    
\end{proof}

To prove the converse of this result we will use the following technical result.

\begin{lemma}\label{lem:aspherical} Let $G$ be a group with a finite aspherical presentation $G=\langle X\mid R\rangle$ such that $|X|\leq|R|$ and let $N\trianglelefteq G$ be a normal subgroup where $G\slash N$ is a torsion-free abelian group. Then $N$ is not free and is not finitely presented.
\end{lemma}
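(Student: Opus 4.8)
The plan is to realise $N$ as a covering space of the presentation complex and to extract a positivity statement for $H_2(N;\mathbb{Z})$ from the inequality $|X|\le|R|$, working over the commutative group ring $\mathbb{Z}[Q]$, where $Q=G/N$, and its field of fractions. First I set up the homological machine. Let $K$ be the presentation $2$-complex of $\langle X\mid R\rangle$; by hypothesis $K$ is a finite aspherical complex, hence a $K(G,1)$, and the cellular chain complex of its universal cover is a finite free resolution
\[0\to\mathbb{Z}[G]^{|R|}\xrightarrow{\partial_2}\mathbb{Z}[G]^{|X|}\xrightarrow{\partial_1}\mathbb{Z}[G]\xrightarrow{\varepsilon}\mathbb{Z}\to 0\]
of $\mathbb{Z}$ over $\mathbb{Z}[G]$ (so $\cd G\le 2$). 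Since $G$ is finitely generated, $Q$ is a finitely generated torsion-free abelian group, i.e. $Q\cong\mathbb{Z}^n$; I may assume $n\ge 1$, since for $N=G$ there is nothing to prove. Let $\widetilde{K}\to K$ be the regular cover with deck group $Q$; as $K$ is aspherical, $\widetilde{K}$ is a $K(N,1)$, and its cellular chain complex is obtained by applying $\mathbb{Z}[Q]\otimes_{\mathbb{Z}[G]}-$ to the resolution above. Thus each $H_i(N;\mathbb{Z})$ is the $i$-th homology of the finite complex of free $\mathbb{Z}[Q]$-modules $0\to\mathbb{Z}[Q]^{|R|}\to\mathbb{Z}[Q]^{|X|}\to\mathbb{Z}[Q]\to 0$, carrying its natural $\mathbb{Z}[Q]$-module structure (the conjugation action of $Q=G/N$).

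Next I pass to the fraction field $\mathcal{F}:=\mathrm{Frac}(\mathbb{Z}[Q])$ of the Laurent polynomial domain $\mathbb{Z}[Q]=\mathbb{Z}[\mathbb{Z}^n]$. Since $\mathcal{F}$ is flat over $\mathbb{Z}[Q]$, the group $H_i(N;\mathbb{Z})\otimes_{\mathbb{Z}[Q]}\mathcal{F}$ is the homology of the tensored complex $0\to\mathcal{F}^{|R|}\to\mathcal{F}^{|X|}\to\mathcal{F}\to 0$. Comparing Euler characteristics, its alternating sum of dimensions is $1-|X|+|R|\ge 1$, and this equals $\sum_i(-1)^i\dim_{\mathcal{F}}\bigl(H_i(N)\otimes_{\mathbb{Z}[Q]}\mathcal{F}\bigr)$. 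The degree-zero term vanishes: $H_0(N;\mathbb{Z})=\mathbb{Z}=\mathbb{Z}[Q]/I$ where $I$ is the augmentation ideal, which is nonzero because $n\ge 1$, so $\mathbb{Z}\otimes_{\mathbb{Z}[Q]}\mathcal{F}=0$. Consequently
\[\dim_{\mathcal{F}}\bigl(H_2(N)\otimes_{\mathbb{Z}[Q]}\mathcal{F}\bigr)=\bigl(1-|X|+|R|\bigr)+\dim_{\mathcal{F}}\bigl(H_1(N)\otimes_{\mathbb{Z}[Q]}\mathcal{F}\bigr)\ge 1,\]
so in particular $H_2(N;\mathbb{Z})$ has positive rank as a $\mathbb{Z}[Q]$-module.

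This one positivity statement gives both conclusions. If $N$ were free, then $\cd N\le 1$ and $H_2(N;\mathbb{Z})=0$, contradicting $H_2(N)\otimes_{\mathbb{Z}[Q]}\mathcal{F}\ne 0$; hence $N$ is not free. If $N$ were finitely presented it would be of type $FP_2$, so there is a free resolution of $\mathbb{Z}$ over $\mathbb{Z}[N]$ that is finitely generated in degrees $\le 2$; then $H_2(N;\mathbb{Z})$ is a subquotient of a finitely generated free abelian group and is therefore finitely generated as an abelian group. On the other hand, positive $\mathbb{Z}[Q]$-rank forces $H_2(N;\mathbb{Z})\otimes_{\mathbb{Z}}\mathbb{Q}$ to contain a copy of $\mathbb{Q}[Q]=\mathbb{Q}[\mathbb{Z}^n]$ (a positive-rank module over a domain contains a free rank-one submodule), which is infinite-dimensional over $\mathbb{Q}$; so $H_2(N;\mathbb{Z})$ cannot be finitely generated as an abelian group. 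This contradiction shows $N$ is not finitely presented.

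The computational heart is the identification of $H_\ast(N;\mathbb{Z})$ with the homology of the explicit length-two complex of $\mathbb{Z}[Q]$-modules and the passage to $\mathcal{F}$; I expect the only genuinely delicate points to be bookkeeping the $\mathbb{Z}[Q]$-module structure correctly so that ``rank'' is meaningful, and the elementary commutative-algebra step converting positive $\mathcal{F}$-rank into infinite $\mathbb{Q}$-dimension. Everything else is formal. One could instead package the positivity through the $L^2$-Euler characteristic $\chi^{(2)}(G)=1-|X|+|R|>0$, which forces $b_2^{(2)}(G)>0$, but the fraction-field computation above is self-contained and avoids von Neumann dimensions.
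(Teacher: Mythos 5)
Your proposal is correct and takes essentially the same route as the paper's own proof: both tensor the chain complex of the universal cover of the aspherical presentation complex down to the domain $\Z[G/N]$, embed that ring in its field of fractions $F$, and use $|X|\le|R|$ together with the vanishing of $H_0(N)\otimes_{\Z[G/N]}F$ to force $H_2(N)\otimes_{\Z[G/N]}F\neq 0$, which rules out freeness and also finite presentability, since being finitely presented (hence $FP_2$) would make $H_2(N)$ finitely generated as an abelian group and therefore torsion over $\Z[G/N]$. The only differences are minor: you justify that $\Z[G/N]$ is a domain by observing $G/N\cong\Z^n$ is a Laurent polynomial ring, where the paper instead cites the Farkas--Snider theorem for torsion-free polycyclic group rings, and you make explicit the assumption $G/N\neq 1$ that the paper's proof also uses implicitly (its torsion argument for $H_0(N)$ needs a nontrivial $\bar g$) --- though note that for $N=G$ it is not that there is ``nothing to prove'' but rather that the statement degenerates (a finitely presented $G$ cannot then fail to be finitely presented), a case which never occurs in the paper's application since there $G/N$ is a nontrivial abelianization.
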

\begin{proof}
Let us consider the chain complex of the universal cover of the presentation complex which has an exact sequence of the form
$$\oplus_{R}\Z G\to \oplus_{X}\Z G\to \Z G.$$
Tensoring this sequence with $\Z [G\slash N]$  we obtain
$$\oplus_{R}\Z [G\slash N]\to \oplus_{X}\Z [G\slash N]\to \Z [G\slash N],$$
a sequence where the homology groups are precisely the homology groups of $N$. The ring $\Z [G\slash N]$ has no zero divisors  (the zero divisor conjecture holds for torsion-free polycyclic groups, see \cite{FarkasSnider}) so it embeds in its field of fractions, say $F$. The field $F$ is a flat $\Z [G\slash N]$-module, so tensoring with $F$ preserves exact sequences, and the homology of the sequence
$$\oplus_{R}F\buildrel{\delta_2}\over\to \oplus_{X}F\buildrel{\delta_1}\over\to F$$
is precisely $H_\bullet(N)\otimes_{G\slash N}F$.

Note that $H_0(N)=\Z$ is a torsion $\Z [G\slash N]$-module (the elements of the form $1-\bar g$ annihilate it) so it vanishes when we tensor it with the field of fractions, i.e. $H_0(N)\otimes_{G\slash N}F=0$. Hence, the map $\delta_1$ is an epimorphism. As this is a sequence of $F$-vector spaces and $\mathrm{im}\delta_2\leq\ker\delta_1$, we must have that
$$|R|-\dim\ker\delta_2=\dim\mathrm{im}\delta_2\leq\dim\ker\delta_1=|X|-1,$$ 
and using that $|X|\leq|R|$, we deduce that
$$1\leq|R|-|X|+1\leq \dim \ker\delta_2.$$
Therefore, $H_2(N)\otimes_{G\slash N}F\neq 0$, which implies that $H_2(N)\neq 0$. Thus, the group $N$ is not free and, moreover, $H_2(N)$ cannot be a finitely generated abelian group because, in that case, it would be a torsion $\Z[G\slash N]$-module.  This implies that $N$ is not finitely presented. Otherwise, one could construct a free resolution with finitely generated modules up to degree 2, which would force $H_2(N)$ to be finitely generated.
\end{proof}

\begin{proposition} Let $A_\Gamma$ be an incoherent Artin group. Then the derived subgroup $[A_\Gamma,A_\Gamma]$ is not free. 
\end{proposition}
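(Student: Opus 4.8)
The plan is to prove the contrapositive by producing an explicitly non-free subgroup inside $[A_\Gamma,A_\Gamma]$. The key reduction is Nielsen--Schreier: subgroups of free groups are free, and since $[A_\Omega,A_\Omega]\le[A_\Gamma,A_\Gamma]$ for every subgraph $\Omega\subseteq\Gamma$, it suffices to exhibit a single subgraph $\Omega$ with $[A_\Omega,A_\Omega]$ not free. If $A_\Gamma$ is incoherent then by Theorem \ref{Artincoherence_classification} one of (i), (ii), (iii) fails, and I would take $\Omega$ to be a minimal witness to the failure: a chordless cycle $C_n$ ($n\ge4$) when (i) fails; a complete graph on $3$ or $4$ vertices with at least two non-$2$ labels when (ii) fails; and the graph of Figure \ref{fig:conditioniii} when (iii) fails. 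The whole argument then reduces to checking non-freeness of $[A_\Omega,A_\Omega]$ for these shapes.

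For the witnesses with aspherical presentation complex this is immediate from Lemma \ref{lem:aspherical}. A chordless cycle and a non-spherical triangle contain no spherical triangle, hence are $2$-dimensional, so their standard presentations are aspherical; they satisfy $|V|\le|E|$ (with equality), and by Lemma \ref{lem:abelianization} the abelianization $A_\Omega/[A_\Omega,A_\Omega]$ is free abelian, hence torsion-free. So Lemma \ref{lem:aspherical} applies with $N=[A_\Omega,A_\Omega]$ and yields non-freeness. For the two \emph{reducible} witnesses I would instead use a direct product decomposition: the graph of Figure \ref{fig:conditioniii} gives $A_\Omega\cong F_2\times A_{I_2(m)}$ (the two valence-two vertices generate a free group centralizing the dihedral group on the $m$-edge), and a $K_4$ whose two non-$2$ edges are disjoint gives $A_\Omega\cong A_{I_2(k)}\times A_{I_2(l)}$. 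In either case $[A_\Omega,A_\Omega]$ is the direct product of the two derived subgroups, each non-trivial since the relevant dihedral and free factors are non-abelian ($m,k,l\ge3$); a direct product of two non-trivial torsion-free groups contains $\Z^2$ and so is not free.

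The remaining witnesses are the three spherical triangles $(2,3,3)$, $(2,3,4)$, $(2,3,5)$, where the presentation is not aspherical ($\cd(A_\Omega)=3$) and Lemma \ref{lem:aspherical} does not apply. Here I would use that $A_\Omega$ is a rank-$3$ spherical Artin group with a finite $K(\pi,1)$ of dimension $3$, together with the sequence $1\to[A_\Omega,A_\Omega]\to A_\Omega\to A_\Omega/[A_\Omega,A_\Omega]\to1$ and subadditivity of cohomological dimension. For $(2,3,3)$ and $(2,3,5)$ the two odd-labelled edges share a vertex, so by Lemma \ref{lem:abelianization} the abelianization has rank $1$; then $3=\cd(A_\Omega)\le\cd([A_\Omega,A_\Omega])+1$ forces $\cd([A_\Omega,A_\Omega])\ge2$, and $[A_\Omega,A_\Omega]$ is not free.

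The hard case, which I expect to be the main obstacle, is the triangle $(2,3,4)$ (type $B_3$). Here there is a single odd edge, so the abelianization has rank $2$ and the cohomological-dimension bound only gives $\cd([A,A])\ge1$, consistent with freeness; worse, I expect a computation of the Salvetti boundary maps to show that the twisted homology $H_\ast(A;F)$ over the fraction field $F$ of $\Z[A/[A,A]]=\Z[x^{\pm},y^{\pm}]$ vanishes in every degree (the top boundary map does not die over $F$, since the relevant ratio of weighted Poincaré polynomials is non-zero), so the mechanism behind Lemma \ref{lem:aspherical} gives no information either. To settle this case I would instead exhibit an explicit non-free subgroup of $[A,A]$ --- for instance a copy of $\Z^2$ built from two commuting elements of trivial abelianization, or a non-free free-by-free subgroup coming from the semidirect decomposition of the type-$B_3$ (annular braid) group --- or appeal to a direct computation of the commutator subgroup of $A(B_3)$, paralleling the dihedral computation cited in Example \ref{ex:dihedral}. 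In all cases the conclusion for $A_\Gamma$ then follows from the opening reduction, since $[A_\Omega,A_\Omega]\le[A_\Gamma,A_\Gamma]$ is non-free.
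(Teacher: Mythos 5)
Your proposal coincides with the paper's proof in its overall architecture and in most cases, but it has a genuine gap exactly where you predicted one: the spherical triangle $(2,3,4)$. For that case you do not give a proof; you list candidate strategies (find a $\Z^2$ of commuting elements with trivial abelianization inside $[A,A]$, use a semidirect decomposition of the type-$B_3$ group, or ``appeal to a direct computation''), but none is carried out, and none is routine. This is precisely the point where the paper needs an external, non-trivial input: it invokes \cite[Corollary 1.5]{EscartinMartinez}, which shows that the derived subgroup of the $(2,3,4)$ triangle Artin group is finitely generated but not finitely presented, hence not free. Your diagnosis of why the soft arguments fail there is accurate --- the abelianization is $\Z^2$, so the bound $\cd(A_\Gamma)\leq\cd(A_\Gamma/[A_\Gamma,A_\Gamma])+\cd([A_\Gamma,A_\Gamma])$ only gives $\cd([A_\Gamma,A_\Gamma])\geq 1$, and the standard presentation is not aspherical since $\cd(A_\Gamma)=3$, so Lemma \ref{lem:aspherical} is unavailable --- but diagnosing the obstruction does not remove it. Note also that your hoped-for $\Z^2$ is not obviously available: the natural central elements (Garside elements of the group or of its spherical parabolics) map non-trivially to the abelianization, so producing two independent commuting elements of $[A,A]$ would require genuine work.

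Everywhere else your argument is essentially the paper's: the same reduction to an induced witness subgraph via $[A_\Omega,A_\Omega]\leq[A_\Gamma,A_\Gamma]$ and Nielsen--Schreier; the same application of Lemma \ref{lem:aspherical} (with torsion-freeness of the abelianization from Lemma \ref{lem:abelianization}) to chordless cycles and non-spherical triangles; the same cohomological-dimension contradiction for $(2,3,3)$ and $(2,3,5)$; and your direct-product treatment of the graph of Figure \ref{fig:conditioniii}, $A_\Omega\cong F_2\times A_{I_2(m)}$, is a repackaging of the paper's observation that $[a,b]$ and $[v,w]$ are independent commuting elements of the derived subgroup. One point in your favour: you explicitly handle the complete graph on four vertices whose two non-$2$ edges are disjoint, giving $A_\Omega\cong A_{I_2(k)}\times A_{I_2(l)}$. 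This configuration violates condition (ii) of Theorem \ref{Artincoherence_classification} yet contains no triangle with two non-$2$ labels, no induced cycle of length at least $4$, and not the Figure \ref{fig:conditioniii} graph, so it is not covered by the three-case list with which the paper's proof opens; your product argument fills in that omission. Still, because the $(2,3,4)$ case is left unresolved, the proposal as written does not prove the proposition.
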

\begin{proof} By Theorem \ref{Artincoherence_classification}, we may assume that $\Gamma$ has a subgraph $\Delta$ of the following form:
\begin{itemize}
\item[(i)] the graph in Figure \ref{fig:conditioniii},

\item[(ii)] a cycle of length at least $4$,

\item[(iii)] a triangle with at most one label $2$. 
\end{itemize}
Moreover, as $[A_\Delta,A_\Delta]$ is a subgroup of $[A_\Gamma,A_\Gamma]$, it suffices to show that $[A_\Delta,A_\Delta]$  is not free. In summary, we may assume that $\Gamma$ is either of the form (i), (ii) or (iii).

If $\Gamma$ is the graph of Figure \ref{fig:conditioniii}, then the elements $[a,b]$ and $[v,w]$ lie in $[A_\Gamma,A_\Gamma]$ and they commute, so $[A_\Gamma,A_\Gamma]$ is not free.

If $\Gamma$ is as in (iii), we distinguish two cases depending on whether $A_\Gamma$ is spherical or not. The spherical cases are precisely when the labels of the triangle are $(2,3,3)$, $(2,3,4)$ or $(2,3,5)$ (see, for instance, \cite{Kasia}). In the $(2,3,4)$ case, from \cite[Corollary 1.5]{EscartinMartinez} we see that the derived subgroup is finitely generated but not finitely presented, so it is not free. In the other two remaining cases, both groups have infinite cyclic abelianizations. If the derived subgroup was free, we would have that
$$\cd(A_\Gamma)\leq\cd(A_\Gamma \slash [A_\Gamma,A_\Gamma])+\cd([A_\Gamma,A_\Gamma])=2.$$
The cohomological dimension of spherical type Artin groups is the number of standard generators (see \cite{BrieskornSaito}, \cite{Deligne}), so this leads to a contradiction.

This means that $A_\Gamma$ is $2$-dimensional (for non spherical triangle Artin groups see \cite[Lemma 2.2]{Gordon}), so $A_\Gamma$ satisfies the $K(\pi,1)$-Conjecture. Hence, the presentation complex of the standard presentation is aspherical. The fact that $\Gamma$ is a cycle implies that if $n$ is the number of vertices, then this presentation has precisely $n$ generators and $n$ relations, so from Lemma \ref{lem:aspherical} we deduce that the derived subgroup is not free (observe that Lemma \ref{lem:abelianization} implies that the abelianization of $A_\Gamma$ is torsion-free). 
\end{proof}

\section{Finitely generated normal subgroups of coherent Artin groups}

Recall from the introduction that one of the goals of the paper is to show that finitely generated normal subgroups of certain Artin groups behave as in the class of RAAGs. In \cite{CasalsLopezdeGamiz}, it was proved that the quotient of a RAAG by a finitely generated (full) normal subgroup is virtually abelian, that is, that if $G$ is a RAAG and $N\trianglelefteq G$ is a finitely generated (full) normal subgroup of $G$, then $G \slash N$ is virtually abelian.

In this section we prove the analogous version for those Artin groups that split as a free amalgamated product over a free abelian standard parabolic subgroup. Recall that by Proposition \ref{Artincoherent_decomposition}, coherent Artin groups admit such a decomposition. Namely, we show

\begin{theorem}\label{Artingroup_normal}
Let $A_\Gamma$ be an Artin group that admits a decomposition
\[A_\Gamma=A_{\Gamma_1}\ast_{A_\Delta}A_{\Gamma_2},\]
where $\Gamma_1, \Gamma_2, \Delta$ are proper subgraphs, $\Delta= \Gamma_1 \cap \Gamma_2$, $A_\Delta$ is a free abelian group and let $N\trianglelefteq A_\Gamma$ be a non-trivial finitely generated normal subgroup. Then either
\begin{itemize}
    \item $A_\Gamma \slash N$ is virtually abelian; or
    \item $A_\Gamma$ splits as a direct product of standard parabolic subgroups
  $$A_\Gamma=A_S\times A_{\Gamma \setminus S},$$
where $S$ is a subgraph of $\Gamma$, $A_S$ is a free abelian group and $N\trianglelefteq A_S$. 
\end{itemize}
\end{theorem}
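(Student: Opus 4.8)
The plan is to study the action of $G=A_\Gamma$ on the Bass--Serre tree $T$ of the given splitting. Here $G$ acts minimally and cocompactly, the quotient graph being a single edge, with vertex stabilizers the conjugates of $A_{\Gamma_1},A_{\Gamma_2}$ and edge stabilizers the conjugates of the free abelian group $A_\Delta$; minimality holds because $\Gamma_1,\Gamma_2$ are proper subgraphs, so $A_\Delta$ sits with index at least $2$ in each factor and every vertex of $T$ has degree at least two. The whole argument then splits according to whether $N$ contains a hyperbolic element or consists only of elliptic elements. First I would record a normality reduction removing the awkward possibility that $N$ fixes an end but no vertex: if $N$ fixed an end $\xi$, then for every $g\in G$ and $n\in N$ one has $n(g\xi)=g(g^{-1}ng)\xi=g\xi$ since $g^{-1}ng\in N$ fixes $\xi$, so $N$ fixes the whole orbit $G\xi$; as $G$ contains hyperbolic elements this orbit has at least three ends, and a group fixing three ends fixes the median (tripod centre) vertex. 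Hence $N$ is either elliptic or contains a hyperbolic element.

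Suppose first that $N$ contains a hyperbolic element. Then $N$ is non-elliptic and fixes no end, so it has a unique minimal invariant subtree $T_N$; since $gT_N$ is the minimal invariant subtree of $gNg^{-1}=N$, we get $gT_N=T_N$, so $T_N$ is $G$-invariant and hence $T_N=T$ by minimality of the $G$-action. Thus $N$ acts minimally on $T$, and as $N$ is finitely generated the quotient graph $N\backslash T$ is finite. Now $G/N$ acts on the finite graph $N\backslash T$, so the kernel $K$ of $G/N\to\Aut(N\backslash T)$ has finite index. Writing $K=\widetilde K/N$ with $N\trianglelefteq\widetilde K\trianglelefteq G$, the fact that $K$ fixes the image of the base edge $e$ gives $ke\in Ne$ for all $k\in\widetilde K$, hence $k\in N\cdot\Stab_G(e)=N\,A_\Delta$; therefore $\widetilde K=N\,(A_\Delta\cap\widetilde K)$ and
\[ K=\widetilde K/N\;\cong\;(A_\Delta\cap\widetilde K)\big/(A_\Delta\cap\widetilde K\cap N), \]
a quotient of a subgroup of the free abelian group $A_\Delta$. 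Thus $K$ is abelian of finite index in $G/N$, so $A_\Gamma/N$ is virtually abelian. This tree-theoretic route is what replaces the WPD and weak-malnormality input used in the RAAG case.

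Suppose instead that $N$ is elliptic. By normality $N$ then fixes the whole orbit of any fixed vertex, and since the convex hull of such an orbit is all of $T$, $N$ lies in the kernel of the action, i.e. in the normal core $\bigcap_{g\in G}gA_\Delta g^{-1}\leq A_\Delta$; in particular $N$ is free abelian. I would then let $S$ be the union of the supports (inside the label-$2$ clique $\Delta$) of the elements of $N$, so that $N\leq A_S$ with $A_S$ free abelian, and aim to prove $A_\Gamma=A_S\times A_{\Gamma\setminus S}$, i.e. that every $u\in S$ is joined to every other vertex $v$ by an edge labelled $2$. The mechanism is that for $z\in N$ and any vertex $v$ one has $vzv^{-1}\in N\leq A_\Delta$; since $vzv^{-1}$ lies in the parabolic $A_{\{v\}\cup\operatorname{supp}(z)}$, the van der Lek intersection formula $A_X\cap A_Y=A_{X\cap Y}$ confines it to $A_{\operatorname{supp}(z)}$, and localizing further to a two-generator parabolic $A_{\{u,v\}}$ reduces matters to checking in a dihedral Artin group (or free product) that a conjugate $vu^{a}v^{-1}$ can lie in $\langle u\rangle$ only when the label is $2$.

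I expect the elliptic case to be the main obstacle. The hyperbolic case is essentially formal once the minimal-subtree and cocompactness facts are in place, whereas forcing the label-$2$ complete join in the elliptic case is delicate precisely because odd labels obstruct the naive coordinate-killing retractions onto two-generator parabolics (such retractions exist only in the even case). The careful point is to exploit the normality constraints $vNv^{-1}\subseteq A_\Delta$ and $v^{-1}Nv\subseteq A_\Delta$ simultaneously together with the van der Lek intersections, and, if needed, to run an induction on the number of vertices using that $N$ also lies in the core of $A_\Delta$ inside each factor $A_{\Gamma_i}$, so that the direct-product decomposition can be assembled from the two factors.
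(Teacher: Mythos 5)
Your hyperbolic/end-fixing analysis is correct, and it in effect reproves in this special case the result the paper simply cites (Ratcliffe's theorem on normal subgroups of amalgams \cite{Ratcliffe}): if $N$ contains a hyperbolic element, minimality plus finite generation gives a finite quotient graph $N\backslash T$, and your Dedekind-type computation shows a finite-index subgroup of $A_\Gamma/N$ is a quotient of a subgroup of $A_\Delta$, hence $A_\Gamma/N$ is virtually abelian. Your reduction of the elliptic case to $N\leq K=\bigcap_{g\in A_\Gamma}A_\Delta^g$, the kernel of the action, also matches the paper. The genuine gap is everything after that point. The paper's whole technical content is a separate lemma (Lemma \ref{Artingroup_kernel}) proving that $K$ is a standard parabolic $A_S$ which is a free abelian direct factor, $A_\Gamma=A_S\times A_{\Gamma\setminus S}$. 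You instead propose to take $S$ to be the union of the supports of elements of $N$ and to force each $u\in S$ to be joined to every other vertex $v$ by a label-$2$ edge, by ``localizing'' the constraint $vzv^{-1}\in A_{\operatorname{supp}(z)}$ to the dihedral parabolic $A_{\{u,v\}}$. That localization step is precisely what does not work, and you concede as much: in the presence of labels different from $2$ (in particular odd labels) there is no retraction of $A_\Gamma$ onto $A_{\{u,v\}}$, and the van der Lek intersection $A_X\cap A_Y=A_{X\cap Y}$ constrains the whole element $vzv^{-1}$ but does not let you separate the contributions of the different generators in $\operatorname{supp}(z)$: from $vzv^{-1}\in A_{\operatorname{supp}(z)}$ with $z=u_1^{a_1}\cdots u_k^{a_k}$ you cannot conclude anything about the individual conjugates $vu_i^{a_i}v^{-1}$. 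So the elliptic case, which you yourself flag as ``the main obstacle,'' is left as a sketch with no working mechanism.

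The missing idea is the solution of Tits' conjecture by Crisp and Paris \cite{ParisCrisp}: the squares of the standard generators of an Artin group generate the obvious right-angled Artin group. The paper proves the key identity $A_\Delta\cap A_\Delta^{w^2}=A_{\lk^2_\Gamma(w)\cap\Delta}$ for each vertex $w\notin\Delta$ (conjugating by $w^2$, not $w$, exactly so that Crisp--Paris applies), via a chain of reductions: first to $\Gamma=\Delta\cup\{w\}$, then splitting off the label-$2$ part as a direct factor, then a further Bass--Serre argument inside the sub-amalgam $A\ast_C A_\Delta$ with $A=\langle w,\lk_\Delta(w)\rangle$, reducing to the case where every vertex of $\Delta$ is joined to $w$ with label $>2$. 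There Crisp--Paris gives that $\langle w^2, t^2: t\in\Delta\rangle$ is the free product $\langle w^2\rangle\ast C_1$ with $C_1$ free abelian, so $C_1\cap C_1^{w^2}=1$, and a finite-index argument upgrades this to $A_\Delta\cap A_\Delta^{w^2}=1$. Intersecting over all $w\in V(\Gamma\setminus\Delta)$ then identifies $K$ with $A_S$ for $S=\bigcap_w\lk^2_\Gamma(w)\cap\Delta$ and yields the direct-product splitting. Without this input, or some genuine replacement for it, your proposal does not close the elliptic case and is therefore incomplete.
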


In order to show this result, we first need to understand the kernel of the action of $A_\Gamma$ in the Bass-Serre tree corresponding to the decomposition of the statement.

\begin{lemma}\label{Artingroup_kernel}
Let $A_\Gamma$ be an Artin group that admits a decomposition
\[A_\Gamma=A_{\Gamma_1}\ast_{A_\Delta}A_{\Gamma_2},\]
where $\Gamma_1, \Gamma_2, \Delta$ are proper subgraphs, $\Delta= \Gamma_1 \cap \Gamma_2$ and $A_\Delta$ is a free abelian group. Denote by $T$ the Bass-Serre tree associated to this decomposition. Then the kernel $K$ of the action of $A_\Gamma$ on $T$ is a standard parabolic subgroup of $A_\Gamma$ which is a free abelian direct factor. That is, $K= A_S$ where $S$ is a subgraph of $\Gamma$, $K$ is a free abelian group and \[A_\Gamma=A_S\times A_{\Gamma\setminus S}.\]
\end{lemma}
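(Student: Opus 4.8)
The plan is to identify $K$ through Bass--Serre theory and then pin it down using the central vertices of $\Gamma$. Since $K$ is the kernel of the action on $T$, it fixes the two base vertices, so $K\le A_{\Gamma_1}\cap A_{\Gamma_2}=A_\Delta$; being a kernel it is normal; and conversely any normal subgroup of $A_\Gamma$ contained in $A_\Delta$ lies in every conjugate $gA_\Delta g^{-1}$, hence fixes every vertex of $T$ and is contained in $K$. Thus $K$ is the largest normal subgroup of $A_\Gamma$ contained in $A_\Delta$, i.e. the normal core $\bigcap_{g\in A_\Gamma}gA_\Delta g^{-1}$. As $A_\Delta$ is free abelian, $K\le A_\Delta$ is automatically free abelian, which already yields one of the assertions.

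Next I would introduce the candidate subgraph. Let $S$ be the set of vertices joined to every other vertex of $\Gamma$ by an edge labelled $2$ (equivalently, the standard generators lying in the centre). Then $A_S$ is free abelian and central, $\Gamma$ is the join of $S$ and $\Gamma\setminus S$ with all crossing edges labelled $2$, and hence $A_\Gamma=A_S\times A_{\Gamma\setminus S}$; this is exactly the splitting we want, so it remains to prove $K=A_S$. Two elementary observations feed into this. First, the decomposition hypothesis forces $V(\Gamma)=V(\Gamma_1)\cup V(\Gamma_2)$ together with the absence of any edge of $\Gamma$ joining $\Gamma_1\setminus\Delta$ to $\Gamma_2\setminus\Delta$ (such a relation would not be visible in the amalgam). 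Consequently $S\subseteq\Delta$: a vertex joined to all others is in particular joined to vertices of the nonempty sets $\Gamma_1\setminus\Delta$ and $\Gamma_2\setminus\Delta$, which is impossible for a vertex lying in either of them. Second, $A_S$ is central and contained in $A_\Delta$, so $A_S\le K$.

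The reverse inclusion $K\le A_S$ is the heart of the matter and amounts to showing that every generator $u\notin\Delta$ centralises $K$. Fix $1\neq z\in K$ and such a $u$; then $uzu^{-1}\in K\le A_\Delta$. Writing $\Delta=\Delta_u\sqcup\Lambda$, where $\Delta_u$ consists of the label-$2$ neighbours of $u$ in $\Delta$ (those commuting with $u$), I would use $A_\Delta=A_{\Delta_u}\times A_\Lambda$ to factor $z=z'z''$ with $z'\in A_{\Delta_u}$, $z''\in A_\Lambda$. Since $u$ commutes with $A_{\Delta_u}$ and $z'\in A_\Delta$, we get $uz''u^{-1}\in A_\Delta$, whence $uz''u^{-1}\in A_\Delta\cap A_{\Lambda\cup\{u\}}=A_\Lambda$, that is $uz''u^{-1}\in A_\Lambda\cap uA_\Lambda u^{-1}$. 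Splitting $\Lambda$ into the non-neighbours $\Lambda_1$ and the neighbours $\Lambda_2$ of $u$ produces the visual amalgam $A_{\Lambda\cup\{u\}}=A_\Lambda\ast_{A_{\Lambda_2}}A_{\Lambda_2\cup\{u\}}$, and the standard geodesic--stabiliser computation in its Bass--Serre tree reduces the intersection to $A_{\Lambda_2}\cap uA_{\Lambda_2}u^{-1}$ inside the complete graph $\Lambda_2\cup\{u\}$, in which $u$ commutes with no vertex.

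The main obstacle is precisely to show that this last intersection is trivial: it is a malnormality/parabolic-intersection statement for the free abelian parabolic $A_{\Lambda_2}$ conjugated by a non-commuting generator $u$. For a single neighbour this follows from the structure of the dihedral Artin group $A(I_2(m))$ with $m\ge 3$ (an amalgam of two infinite cyclic groups over their common central subgroup), in which a conjugate of the cyclic parabolic $\langle s\rangle$ by the other generator meets $\langle s\rangle$ trivially; the general case is where I would invoke the available results on intersections of parabolic subgroups with their conjugates. Granting triviality, $uz''u^{-1}=1$, so $z''=1$ and $z=z'\in A_{\Delta_u}$; thus every vertex of $\operatorname{supp}(z)$ is a label-$2$ neighbour of $u$. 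Letting $u$ range over all generators outside $\Delta$, and recalling that all edges inside $\Delta$ carry label $2$, each $v\in\operatorname{supp}(z)$ is joined to every other vertex by a label-$2$ edge, i.e. $v\in S$. Hence $z\in A_S$, giving $K=A_S$ and, with the join decomposition above, $A_\Gamma=A_S\times A_{\Gamma\setminus S}$, as required.
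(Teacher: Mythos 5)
Your overall strategy is essentially the paper's: you identify $K$ with the normal core $\bigcap_{g}gA_\Delta g^{-1}$, take $S$ to be the set of central (label-$2$-joined-to-everything) vertices, observe $S\subseteq\Delta$ and $A_S\le K$, and then reduce the inclusion $K\le A_S$, via the splitting $A_\Delta=A_{\Delta_u}\times A_\Lambda$ and the visual amalgam $A_{\Lambda\cup\{u\}}=A_\Lambda\ast_{A_{\Lambda_2}}A_{\Lambda_2\cup\{u\}}$ with a geodesic-stabilizer argument, to the triviality of $A_{\Lambda_2}\cap uA_{\Lambda_2}u^{-1}$ inside the cone $A_{\Lambda_2\cup\{u\}}$, where $\Lambda_2$ is complete with all labels $2$ and every edge from $u$ has label greater than $2$. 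All of that is correct and parallels the paper's reductions. The genuine gap is the step you explicitly defer: there are no ``available results on intersections of parabolic subgroups'' that cover this cone group. It is in general neither spherical, nor of FC-type, nor $2$-dimensional, nor even, nor of large type (take $\Lambda_2=\{a,b,c\}$ with labels $m_{ua}=m_{ub}=3$ and $m_{uc}=7$: the triangle $uab$ is spherical while $uac$ is not, so every known class is excluded), and for general Artin groups the statement that intersections of parabolics are parabolic is a well-known open conjecture, as this very paper recalls. The dihedral computation you sketch does not bootstrap to $|\Lambda_2|\ge 2$, so the keystone of the proof is missing.

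The paper circumvents exactly this obstruction by never conjugating by a generator $w$ itself, but only by its square $w^2$ (this is the ``technical reason'' it announces for the squares). Since $K$ is normal, $K\le A_\Delta\cap A_\Delta^{w^2}$ is just as good for the upper bound, and for squares one can invoke the Crisp--Paris solution of Tits' conjecture \cite{ParisCrisp}: inside $A_{\Lambda_2\cup\{u\}}$ the subgroup generated by $u^2$ and $\{t^2 : t\in\Lambda_2\}$ is the free product $\langle u^2\rangle\ast\langle t^2 : t\in\Lambda_2\rangle$, so the free abelian factor $\langle t^2 : t\in\Lambda_2\rangle$ is malnormal in it; as this factor has finite index in $A_{\Lambda_2}$ and $A_{\Lambda_2}\cap u^2A_{\Lambda_2}u^{-2}$ is a torsion-free (abelian) group with a trivial finite-index subgroup, one gets $A_{\Lambda_2}\cap u^2A_{\Lambda_2}u^{-2}=1$. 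Your argument survives this substitution verbatim: $u^2zu^{-2}\in K$ by normality, $u^2$ commutes with $A_{\Delta_u}$, and $u^2A_\Lambda\neq A_\Lambda$ is still adjacent to the vertex $A_{\Lambda_2\cup\{u\}}$ of the Bass--Serre tree, so the same geodesic argument lands in $A_{\Lambda_2}\cap u^2A_{\Lambda_2}u^{-2}$. With that single change plus the appeal to \cite{ParisCrisp}, your proof closes the gap and in fact coincides with the paper's.
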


\begin{proof} For a vertex $w$ of $\Gamma$, let us define the set
$$\lk_\Gamma^2(w)=\{v\in V \mid v\text{ is adjacent to $w$ with an edge labeled by 2}\}.$$
Define the subgraph $S$ to be
$$S=\bigcap_{w\in V(\Gamma\setminus\Delta)}\lk^2_\Gamma(w)\cap\Delta.$$
We claim that for any $w\in V(\Gamma\setminus\Delta)$, 
$$A_\Delta\cap A_\Delta^{w^2}=A_{\lk^2_\Gamma(w)\cap\Delta}.$$
The reason why we use $w^2$ is technical and it will become clear later.
The claim implies  the result. Indeed, it implies that $K$, which by definition equals
\[ K= \bigcap_{g\in A_\Gamma} A_\Delta^g,\]
is a subgroup of
\[ \bigcap_{w\in V(\Gamma\setminus\Delta)}(A_\Delta\cap A_\Delta^{w^2})=\bigcap_{w\in V(\Gamma\setminus\Delta)}A_{\lk^2_\Gamma(w)\cap\Delta}=A_S,\] 
 and since $A_S$ is a subgroup of $K$, we get that $K= A_S$.

Let us now prove the claim. Note that we may actually assume that $A_\Gamma$ is generated by $w$ and $\Delta$, that is, we may assume that $\Gamma= \Delta \cup \{w\}$ and that $S$ equals $\lk^2_\Gamma(w)\cap\Delta$. In this setting, we may split the groups as follows:
\[ A_\Delta= A_S \times A_{\Delta_1} , \quad A_\Gamma= A_S \times A_T,\]
with $\Delta_1= \Delta \setminus S$ and $T= \Delta_1 \cup \{w\}.$ Hence, if we show that in the group $A_T$ we have that $A_{\Delta_1} \cap A_{\Delta_1}^{w^2}=1$, then we obtain that $A_\Delta \cap A_\Delta^{w^2}= A_S$.

Once again, this allows us to restrict ourselves to the scenario where $\Gamma$ is $T$, $S= \varnothing$ and $\Delta= \Delta_1$, that is, we may suppose that no vertex of $\Delta$ is linked to $w$ with an edge labeled by $2$ and we have to show that  $A_\Delta \cap A_\Delta^{w^2}= 1$.

Let $A$ be the subgroup generated by $w$ and $\lk_\Delta(w)$. Then $A$ is an Artin group based on a complete graph so that all edges having $w$ as a vertex have label strictly greater than $2$ and all the rest of the edges have label 2 (note that $A$ could just be the cyclic group generated by $w$). Let $C$ be the free abelian subgroup generated by $\lk_\Delta(w)$. We could have that $C=A_\Delta$, and, in fact, we want to reduce the problem to that case.

We have an amalgamated free product decomposition
$$A_\Gamma=A\ast_CA_\Delta,$$
and let $T$ be the associated Bass-Serre tree.
Suppose that $g$ is an element in $A_\Delta\cap A_\Delta^{w^2}$. Then, $g$ would equal $h^{w^2}$ for some $h\in A_\Delta$, so $g^{-1}w^{-2}hw^2=1$. This implies that in the tree $T$ there is a path with vertices
\[ A_\Delta= g^{-1}A_\Delta, \quad g^{-1}A= g^{-1}w^{-2}A, \quad g^{-1}w^{-2}A_\Delta= g^{-1}w^{-2}hA_\Delta,\]
\[ g^{-1}w^{-2}hA= g^{-1}w^{-2}hw^2A=A, \quad A_\Delta.\]
Since $T$ is a tree, there is a backtracking in this path, and taking into account that $w^2$ is not an element of $A_\Delta$, the only option is to have that $g^{-1}A=A= g^{-1}w^{-2}A.$

Therefore, $g$ and $h$ both lie in $A$, so they also belong to $A\cap A_\Delta= C.$ Hence, $g\in C \cap C^{w^2}$ and, as a consequence, $C\cap C^{w^2}=A_\Delta\cap A_\Delta^{w^2}$. In conclusion, we may assume that $A_\Gamma=A$ and $C=A_\Delta$, i.e., that all the vertices in $\Delta$ are linked to $w$ and labels are strictly bigger than 2.   

At this point, we are going to use the result conjectured by Tits and proved by Crisp-Paris in  \cite[Theorem 1]{ParisCrisp} stating that, in an Artin group, the squares of the standard generators generate a RAAG with relators being the commutators between those vertices that already commuted in the ambient group. More precisely, if we consider the subgroup $C_1$ of $C=A_\Delta$ generated by $t^2$ for $t\in\Delta$ and the subgroup $A_1$ of $A$ generated by $w^2$ and $C_1$, then by \cite[Theorem 1]{ParisCrisp}, $A_1$ is the free product of the cyclic group generated by $w^2$ and the free abelian group $C_1$. Thus $C_1$ is malnormal in $A_1$, so in particular $C_1\cap C_1^{w^2}=1$. As the subgroups $C_1$ and $C_1^{w^2}$ have finite index in $C$ and $C^{w^2}$, respectively,  $1=C_1\cap C_1^{w^2}$ has finite index in $A_\Delta\cap A_\Delta^{w^2}$, so $A_\Delta\cap A_\Delta^{w^2}=1$.

\end{proof}

We are now ready to prove the main result of the section.

\begin{proof}[Proof of Theorem \ref{Artingroup_normal}]
Let $A_\Gamma$ be as in the statement, let $T$ be the associated Bass-Serre tree and let $N$ be a non-trivial finitely generated normal subgroup of $A_\Gamma$.

The group $N$ is a subgroup of $A_\Gamma$, so it also acts on $T$ and by \cite[Theorem 2.1]{Ratcliffe} we have that either $NA_\Delta$ has finite index in $A_\Gamma$ or $N$ lies in $A_\Delta$. In the former case, since $A_\Delta$ is a free abelian group, we get that $A_\Gamma \slash N$ is virtually abelian. In the later case, if $N \trianglelefteq A_\Delta$, as $N$ is normal, it actually lies in $K$, the kernel of the action of $A_\Gamma$ on $T$, so the result follows from Lemma \ref{Artingroup_kernel}.
\end{proof}

\section{Artin groups of FC-type and parabolic subgroups}\label{sec:acylindrical}

The main result of the previous section is based on the fact that for Artin groups admiting a decomposition as a free amalgamated product where the amalgamated subgroup is standard parabolic and free abelian, one can understand the kernel of the action of the group on the associated Bass-Serre tree. The kernel is an intersection of parabolic subgroups, and as stated in the introduction, it has been conjectured that for Artin groups, any intersection of parabolic subgroups is again parabolic. This conjecture has been proved for even Artin groups of FC-type by Antolín and Foniqi in \cite[Corollary 5.1]{AntolinFoniqi} and it is known for some other families. An updated status can be found in \cite{CharneyMartinMorris}, where Charney, Martin and Morris-Wright show that in irreducible Artin groups where intersections of parabolics are also parabolics, the parabolic subgroups are weakly malnormal.

Recall that a subgroup $H$ of a group $G$ is \emph{weakly malnormal} if there is some element $g\in G$ such that $H\cap H^g=1$ and an Artin group is \emph{irreducible} if it does not split as a direct product of standard parabolic subgroups. As a consequence, Charney, Martin and Morris-Wright deduce in \cite{CharneyMartinMorris} that these two properties (that is, being irreducible and having the parabolic intersection property) also imply that the group is acylindrically hyperbolic. In particular, they obtain that irreducible even Artin groups of FC-type are acylindrically hyperbolic. Recall \cite[Definition 1.1]{MinasyanOsin} that a group $G$ is called \emph{acylindrically hyperbolic} is there is a possibly infinite generating set $X$ such that the associated Cayley graph $\mathcal{C}$ is hyperbolic with the boundary consisting of more than 2 points and, moreover, the action of $G$ on $\mathcal{C}$ is \emph{acylindrical}, meaning that for every $\epsilon>0$ there are $R,N>0$ such that whenever $x,y\in\mathcal{C}$ are such that $d(x,y)\geq R$, then there are at most $N$ elements $g\in G$ with $d(x,gx)\leq \epsilon$ and $d(y,gy)\leq\epsilon$. 

In this section we will consider the problem of acylindrical hyperbolicity for certain subgroups of even Artin groups of FC-type, generalizing results of Minasyan and Osin for the family of RAAGs (see \cite{MinasyanOsin}). 

We first need a couple of technical results about normalizers and centralizers. Given an Artin group $G=A_\Gamma$ and a standard parabolic subgroup $A_S$, $S\subseteq\Gamma$, Godelle determined in \cite[Corollary 0.4]{Godelle} a generating family for the \emph{quasi-centralizer}, $QC_G(A_S)$, which is the subgroup
$$QC_G(A_S)=\{g\in G\mid gS=Sg\}.$$
As Antolín and Foniqi have observed in \cite[Lemma 2.13]{AntolinFoniqi2}, if $A_\Gamma$ is an even Artin group of FC-type, then the quasi-centralizer coincides with the centralizer $C_G(A_S)$ and, as a consequence, they prove in \cite[Theorem 2.12, Lemma 2.13]{AntolinFoniqi2} the  equalities
\begin{equation}\label{eq:AF}N_{G}(A_S)=A_S\cdot C_{G}(A_S)=A_S\bigcap_{a\in S}C_G(a).
\end{equation}

\begin{lemma}\label{tec_Godelle} Let $G=A_\Gamma$ be an even Artin group of FC-type and $a\in V$. Then the centralizer $C_G(a)$ lies in the parabolic subgroup $A_{\st_\Gamma(a)}$ and there is a (non neccesarily parabolic) subgroup $L$ which is an Artin group based on a proper subgraph of $\Gamma$ such that
$$C_G(a)=\langle a\rangle\ltimes L.$$
Moreover, if $b\in V$ is not adjacent to $a$, then $C_G(a)\cap C_G(b)$ lies in the parabolic subgroup $A_T$, where $T$ is the subgraph induced by
$$\{v\in V \mid [v,a]=[v,b]=1\}.$$
\end{lemma}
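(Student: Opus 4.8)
The plan is to derive all three assertions from Godelle's description of the quasi-centralizer together with the fact, due to Antol\'in and Foniqi, that in an even Artin group of FC-type the quasi-centralizer coincides with the centralizer. First I would apply Godelle's \cite[Corollary 0.4]{Godelle} generating family to $QC_G(\langle a\rangle)=C_G(a)$. For a single standard generator this family consists of $a$ together with, for each $v\in\lk_\Gamma(a)$, the element $v$ itself when $m_{\{a,v\}}=2$ and the ``dihedral centre'' $z_v=(av)^{k_v}$ (with $m_{\{a,v\}}=2k_v$) when $m_{\{a,v\}}>2$; a direct check in the dihedral subgroup $A_{\{a,v\}}$ shows $z_v$ commutes with $a$. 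Since every one of these generators is supported on $\st_\Gamma(a)$, this immediately gives $C_G(a)\le A_{\st_\Gamma(a)}$, which is the first assertion.

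For the second assertion, note that $a$ is central in $C_G(a)$, so with $L:=\langle\, w\ (w\in\lk_\Gamma^2(a)),\ z_v\ (m_{\{a,v\}}>2)\,\rangle$ one has $C_G(a)=\langle a\rangle L$ and $\langle a\rangle\cap L=1$; hence $C_G(a)=\langle a\rangle\ltimes L$ (in fact a direct product, $a$ being central). I would then identify $L$ with the Artin group $A_{\lk_\Gamma(a)}$ via $w\mapsto w$ and $v\mapsto z_v$: the even-FC hypothesis forces the label-$>2$ neighbours of $a$ to be pairwise non-adjacent and joined to $\lk_\Gamma^2(a)$ only by commuting edges, which is exactly what is needed to verify that the $w$ and $z_v$ satisfy (and only satisfy) the defining relations of $A_{\lk_\Gamma(a)}$. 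As $\lk_\Gamma(a)$ is a proper subgraph (it omits $a$), this yields the required $L$.

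For the third assertion, since $a$ and $b$ are non-adjacent we have $a\notin\st_\Gamma(b)$ and $b\notin\st_\Gamma(a)$, so $\st_\Gamma(a)\cap\st_\Gamma(b)=\lk_\Gamma(a)\cap\lk_\Gamma(b)$. Applying the first assertion to both $a$ and $b$ and using $A_S\cap A_{S'}=A_{S\cap S'}$ yields
\[
C_G(a)\cap C_G(b)\ \le\ A_{\st_\Gamma(a)}\cap A_{\st_\Gamma(b)}\ =\ A_{\lk_\Gamma(a)\cap\lk_\Gamma(b)}.
\]
It therefore remains to prove the refinement
\[
C_G(a)\cap A_{\lk_\Gamma(a)}\ \le\ A_{\lk_\Gamma^2(a)},
\]
for then the symmetric statement for $b$ and one more application of $A_S\cap A_{S'}=A_{S\cap S'}$ give $C_G(a)\cap C_G(b)\le A_{\lk_\Gamma^2(a)}\cap A_{\lk_\Gamma^2(b)}=A_T$, since $[v,a]=1$ holds for a vertex $v$ exactly when $m_{\{a,v\}}=2$.

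This refinement is the main obstacle. Its content is that no element of $A_{\lk_\Gamma(a)}$ whose support meets a label-$>2$ neighbour of $a$ can centralise $a$. I would prove it by peeling off the label-$>2$ neighbours $v$ one at a time through the visual splitting $A_{\st_\Gamma(a)}=A_{\st_\Gamma(a)\setminus v}\ast_{A_{\lk_\Gamma(v)\cap\st_\Gamma(a)}}A_{\st_\Gamma(v)\cap\st_\Gamma(a)}$, which is available precisely because, inside $\st_\Gamma(a)$, the vertex $v$ is joined only to $a$ and to some label-$2$ neighbours. In the associated Bass--Serre tree $a$ lies in the edge group, and the dihedral computation $v^{-1}av\notin\langle a\rangle$ (visible on abelianisations, as conjugation fixes the $a$-coordinate while $v^{-1}av\ne a$ for $m_{\{a,v\}}>2$) shows that $a$ does not fix the $v$-translate of the base edge; a standard normal-form argument then forces any element commuting with the elliptic element $a$ to avoid $v$ in its support. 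Iterating over all label-$>2$ neighbours lands the element in $A_{\lk_\Gamma^2(a)}$, as desired. Alternatively, one can reach the third assertion in one step by writing $C_G(a)\cap C_G(b)=C_G(A_{\{a,b\}})=QC_G(A_{\{a,b\}})$ and reading off from Godelle's generating family for the quasi-centralizer of the rank-two free parabolic $A_{\{a,b\}}$ that its generators are supported on the vertices commuting with both $a$ and $b$, that is, on $T$.
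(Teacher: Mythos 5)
Your first assertion is proved exactly as in the paper (Godelle's ribbons plus the Antol\'in--Foniqi identification of quasi-centralizer with centralizer), and your closing ``alternative'' for the third assertion is precisely the paper's argument. The problems are in the second assertion and in your primary route to the third. For the second assertion, the sentence ``verify that the $w$ and $z_v$ satisfy (and only satisfy) the defining relations of $A_{\lk_\Gamma(a)}$'' hides the entire difficulty: checking that the relations \emph{hold} is immediate, but showing that $L$ satisfies \emph{no further} relations --- i.e.\ that the obvious surjection $A_{\lk_\Gamma(a)}\to L$ is injective --- is the crux, and you give no mechanism for it (there is no normal form or retraction that does this for free; e.g.\ the retraction of $A_{\st_\Gamma(a)}$ onto $A_{\lk_\Gamma(a)}$ sends $z_v\mapsto v^{k_v}$, not $v$). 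The paper's proof spends most of its length on exactly this point: by Tietze transformations it rewrites the standard presentation of $A_{\st_\Gamma(a)}$ as a semidirect product presentation $\langle a\rangle\ltimes M$, where $M$ is by construction the Artin group on an auxiliary graph $\hat\Gamma$, and then $L$ is a \emph{standard parabolic} subgroup of $M$, hence an Artin group on a graph isomorphic to $\lk_\Gamma(a)$ by van der Lek. Without an argument of this kind the statement that $L$ is an Artin group on a proper subgraph (which is what the later results, e.g.\ Proposition \ref{EvenFC_normalizer}(iii) and the induction in Corollary \ref{acylindrically hyperbolic}, actually use) is unproved. Also, $\langle a\rangle\cap L=1$ is asserted but not justified; this one is easily repaired (abelianize, or use the paper's normalization $z_{a,w}=a^{1-k}(wa)^ka^{-1}$, which lies in the kernel of the retraction onto $\langle a\rangle$), but it is a step, not a triviality, precisely because your $z_v=(av)^{k_v}$ does not die under that retraction.

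Your primary argument for the third assertion contains a false step. The ``standard normal-form argument'' is claimed to force \emph{any} element commuting with the elliptic element $a$ to avoid $v$ in its support; this is contradicted by your own generator $z_v=(av)^{k_v}$, which commutes with $a$ and has $v$ in its support. So no Bass--Serre argument in the splitting $A_{\st_\Gamma(a)}=A_{\st_\Gamma(a)\setminus v}\ast_{E}A_{\st_\Gamma(v)\cap\st_\Gamma(a)}$ can place $C_G(a)$ inside the vertex group $A_{\st_\Gamma(a)\setminus v}$ (note also that this splitting degenerates when $v$ is adjacent to every vertex of $\st_\Gamma(a)$, e.g.\ in the dihedral case). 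Any correct proof of your refinement $C_G(a)\cap A_{\lk_\Gamma(a)}\leq A_{\lk^2_\Gamma(a)}$ must genuinely use the hypothesis that the element lies in $A_{\lk_\Gamma(a)}$, which your sketch never does. Fortunately this whole route is unnecessary: your alternative --- write $C_G(a)\cap C_G(b)=C_G(A_{\{a,b\}})=QC_G(A_{\{a,b\}})$ and apply Godelle to $S=\{a,b\}$ --- is the paper's proof, and it works because $a$ and $b$ do not commute, so $A_S$ is indecomposable and non-spherical (no type-one ribbons), while a ribbon $\Delta_X\Delta^{-1}_{X\setminus\{t\}}$ requires the indecomposable component of $t$ in $S\cup\{t\}$ to be spherical, which forces $t$ to commute with both $a$ and $b$, i.e.\ $t\in T$. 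If you discard the peeling argument and keep that alternative (spelling out the two bulleted ribbon types), the third assertion is fine; the second assertion remains the genuine gap.
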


\begin{proof}
As we noted above, as $\Gamma$ is even we can use \cite[Corollary 0.4]{Godelle} to obtain a generating system for the centralizer of any standard parabolic subgroup $A_S$. More explicitely, Godelle proves that there is a generating system consisting on \emph{elementary ribbons}, which are elements $g$ of a particular type such that $g^{-1}Xg=Y$ where $X,Y\subseteq\Gamma$. The fact that $\Gamma$ is even implies that if there is such an element $g$, then we must have that $X=Y$. To see this, assume that there is some $x\in X$, $x\not\in Y$ such that  $g^{-1}xg$ equals $y\in Y$. Applying the retraction $\pi \colon A_\Gamma\to A_Y$ as at the end of Subsection \ref{subsec:Artingroups}, we get that $y=\pi(y)=\pi(g^{-1}xg)=\pi(g^{-1})\pi(g)=1$, which is a contradiction. This shows that $X\subseteq Y$, and by symmetry, we deduce that $X=Y$. The elementary ribbons are described in terms of the group elements $\Delta_X\in G$ where $X\subseteq\Gamma$ is a spherical subset. We do not need a general description of $\Delta_X$, but just the fact that if $X=\{a\}$, then $\Delta_X=a$ and if $X=\{a,w\}$ is an edge with label $2k$, then $\Delta_X=(wa)^k$. By \cite[Corollary 0.4]{Godelle}, in even Artin groups the elementary ribbons that generate $C_G(A_S)$ are of one of the following two types:
\begin{enumerate}
\item $\Delta_X$ for $A_X$ an indecomposable spherical direct factor of $A_S$; or
\item $\Delta_X\Delta^{-1}_{X\setminus\{t\}}$ if there is $t\in V$ such that the indecomposable component $X$ of $S\cup\{t\}$ containing $t$ is spherical.
\end{enumerate}

Applying this to the case when $S=\{a\}$, we see that $C_G(a)$ is generated by $a$, all those $v\in V$ such that $[v,a]=1$ and an element of the form $(wa)^ka^{-1}$ for each $w\in V$ linked with $a$ with label $2k$ for $k>1$. 
As all these elements lie in $A_{\st_\Gamma(a)}$, so does the subgroup $C_G(a)$.

To check that $C_G(a)$ can be described as in the statement, it will be useful to modify this generating set. The subgroup $C_G(a)$ is also generated by $a$, all those $v\in V$ such that $[v,a]=1$ and an element of the form 
\begin{equation}\label{eq:defz}
z_{a,w}=a^{1-k}(wa)^ka^{-1}=w^{a^{k-1}}w^{a^{k-2}}\ldots w,
\end{equation} 
for each $w\in V$  linked with $a$ with label $2k$ for $k>1$ (here we use the convention that $w^a=a^{-1}wa$).

As  $A_{\st_\Gamma(a)}$ is parabolic, we have an explicit presentation for $A_{\st_\Gamma(a)}$. The generators are the vertices of $\st_\Gamma(a)$ and there are two types of relators that involve $a$: some of the form $[v,a]$ and some of the form $(wa)^k=(aw)^k$ for $k>1$. 

We can modify this presentation using Tietze transformations as follows. We add generators of the form $w_i=w^{a^i}$ for $1\leq i\leq k-2$ and the elements $z_{a,w}$ as above. Writing $w_0$ instead of $w$, we get new relators
\begin{equation}\label{eq:conjw}w_i^a=w_{i+1}\end{equation}
for $i\in \{0,\ldots,k-3\}$,
$$w_{k-2}^{a}=z_{a,w}w_0^{-1}\ldots w_{k-2}^{-1},$$
and furthermore, the relator  $(wa)^k=(aw)^k$ yields
$$az_{a,w}=a^{-k}(wa)^k=a^{-k}(aw)^k=z_{a,w}a.$$

Therefore, we get a new presentation as follows. As generators, we have $a$ together with the set $\Omega_1$ of vertices $v$ in $\lk_\Gamma(a)$ such that the label of the edge $\{a,v\}$ is 2 and with the set $\Omega_2$ consisting of the $k$ elements 
$$w_0,w_1,\ldots,w_{k-2}, z_{a,w}$$
 for each vertex $w$ in $\lk_\Gamma(a)$ such that the label of the edge $\{a,w\}$ is $2k$.

 The relators that involve $a$ are of the following form. For each $v\in\Omega_1$,  $v^a=v$, and for each $w$ linked to $a$ with label $2k$, the relators in equation (\ref{eq:conjw}) and the relator $z_{a,w}^a=z_{a,w}$. 

To understand which are the relators that do not involve $a$, recall that the FC-condition implies that if two vertices $w$ and $u$ are linked to $a$ with labels greater than 2, then they cannot be adjacent. If such a $w$ is linked to some $v\in V$ with $[v,a]=1$, then the label must be 2, so we also get that $[w_i,v]=1$ for $0\leq i\leq k-2$ and $[z_{a,w},v]=1$. 
Moreover, we have the relators given by the subgraph of $\Gamma$ with $\Omega_1$ as vertex set.

Now, let $M$ be the abstract group defined by a presentation having as generators the elements in $\Omega_1$ and $\Omega_2$ together with all the previous relators that do not involve $a$. We may define an action of $a$ by conjugation on this group using the relators that do involve $a$. To check that this action is well-defined we argue as follows. The only instances in which it may happen that two generators $x, y$ of $M$ are related (in $M$) are when either both lie in $\Omega_1$, and in that case the $a$-action leaves both invariant, or if one, say $x$, lies in $\Omega_1$ and the other, say $y$, is one of the $w_i$'s or $z_{a,w}$. In this last case the FC-condition mentioned above implies that the relation between $x$ and $y$ must be of type $[x,y]=1$ and the definition of the $a$-action implies that in all the possible cases we have that $[x^a,y^a]=1$.

Therefore, we can define the (abstract) semidirect product $\langle a\rangle \ltimes M$. But the standard semidirect product presentation for this group is precisely the presentation for $A_{\st_\Gamma(a)}$ that we had before, so both groups are isomorphic.
In fact, looking at the presentation given for $M$, we see that it is the Artin group associated to the graph $\hat{\Gamma}$ with vertices precisely the elements in $\Omega_1\cup\Omega_2$ and edges determining the relators of $M$. 
 Let $L\leq M$ be the subgroup generated by the subgraph determined by $\Omega_1$ and all the vertices of the form $z_{a,w}$ of the $\Omega_2$ set. Then, $L$  together with $a$ generate $C_G(a)$ (see equation (\ref{eq:defz}) and the paragraph before). Moreover, the previous construction implies that $L$ is the Artin group associated to a subgraph of $\hat{\Gamma}$ which is isomorphic to $\lk_\Gamma(a)$ (but with different vertex labels).
 
In order to show the last part of the statement, we assume that $a,b\in V$ are not linked and we set $S$ to be the induced subgraph with vertices $a$ and $b$. Then $A_S$ has no spherical direct factor and the only way that  $S\cup\{t\}$ has one (with $t\in V$) is when $t$ commutes with both $a$ and $b$. Hence, using Godelle's result again we deduce that $C_G(S)\leq A_T$. To conclude, recall that by \cite[Lemma 2.13]{AntolinFoniqi2}, we have that
$$C_{G}(A_S)=C_G(a)\cap C_G(b).$$
\end{proof}

\begin{proposition}\label{EvenFC_normalizer}
Let $G=A_\Gamma$ be an even Artin group of FC-type and $\varnothing\neq S\subseteq\Gamma$.  Let $N_G(A_S)$ be the normalizer of $A_S$ in $G$. Then either $N_G(A_S)=A_S$ or $N_G(A_S)$ embeds in a direct product of Artin subgroups of $G$. More explicitly, either
\begin{itemize}
\item[(i)] $N_G(A_S)=A_S$; or
\item[(ii)] $N_G(A_S)\leq A_{T_1}\times A_{T_2}$ for $\varnothing \neq T_1,T_2\subseteq\Gamma$ disjoint; or
\item[(iii)] $A_S$ is free abelian and $N_G(A_S)\leq L\times A_S$ where $L\leq G$ is a (non-parabolic) Artin group based on a proper subgraph of $\Gamma$.
\end{itemize}
\end{proposition}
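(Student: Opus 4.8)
The plan is to start from the identity (\ref{eq:AF}), which gives $N_G(A_S)=A_S\cdot C_G(A_S)$ with $C_G(A_S)=\bigcap_{a\in S}C_G(a)$, and to observe that the two factors commute elementwise, so that $N_G(A_S)$ is a central product of $A_S$ and $C\coloneqq C_G(A_S)$ amalgamated over $Z(A_S)=A_S\cap C$. The first task is to pin down a generating set for $C$. Specializing Godelle's description \cite[Corollary 0.4]{Godelle} of the centralizer of a standard parabolic to the even FC-situation, and using that an even FC-type Artin group has no indecomposable spherical subset of more than two vertices (any such triangle decomposes), one sees that, modulo $Z(A_S)$, the subgroup $C$ is generated by two kinds of ribbons: (type A) standard generators $t\notin S$ joined with label $2$ to every vertex of $S$, and (type B) elements $z_{s,t}=\Delta_{\{s,t\}}\Delta_{\{s\}}^{-1}$ attached to a vertex $s\in S$ commuting with all of $S\setminus\{s\}$ and to a vertex $t\notin S$ joined to $s$ with an even label $>2$ and commuting with all of $S\setminus\{s\}$. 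If no such element exists then $C\le A_S$ and $N_G(A_S)=A_S$, which is alternative (i).

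The main division is according to whether $A_S$ is free abelian. Suppose first that it is not; then $S$ carries a non-commuting pair, and the connected component $S_0$ of the graph on $S$ whose edges are the non-label-$2$-adjacent pairs that contains this pair has $A_{S_0}$ non-abelian. The key point, and the place where FC-type enters, is that every type-A and type-B generator commutes with $A_{S_0}$: a type-A vertex commutes with all of $S\supseteq S_0$, while in a type-B element $s$ is a cone vertex of $S$, hence $s\notin S_0$ and both $s$ and $t$ commute with $S\setminus\{s\}\supseteq S_0$. Writing $U$ for the subgraph spanned by $S$ together with the supports of all these generators, it follows that $S_0$ is a full connected component of the corresponding non-commuting graph of $U$, so $A_U=A_{S_0}\times A_{U\setminus S_0}$ with both factors non-empty (the second contains at least one extra generator). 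Since $N_G(A_S)\le A_U$, taking $T_1=S_0$ and $T_2=U\setminus S_0$ gives alternative (ii).

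Now suppose $A_S$ is free abelian. Then $A_S$ is central in $C$, so $A_S\le Z(N_G(A_S))$ and $N_G(A_S)=C$. Here I would exploit the retraction $\pi\colon A_\Gamma\to A_S$ available for even Artin groups (see Subsection \ref{subsec:Artingroups}): its restriction to $N_G(A_S)$ is onto $A_S$ and is split by the inclusion $A_S\hookrightarrow N_G(A_S)$, so setting $L=N_G(A_S)\cap\ker\pi$ one gets $N_G(A_S)=L\rtimes A_S=L\times A_S$, the product being direct precisely because $A_S$ is central. The factor $L$ is generated by the type-A and type-B ribbons (each lies in $\ker\pi$, since $\pi$ kills every vertex outside $S$), and by the Tietze/presentation analysis performed in the proof of Lemma \ref{tec_Godelle} it is an Artin group based on a proper subgraph of $\Gamma$, isomorphic to the common link of $S$ with relabelled edges. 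If $L=1$ we are in (i); if $L$ is parabolic—which happens exactly when no type-B element occurs, so that $L=A_P$ for the set $P$ of type-A vertices—then $N_G(A_S)=A_S\times A_P$ is alternative (ii); and if some type-B element occurs then $L$ contains a $z$-element and is non-parabolic, giving alternative (iii).

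The main obstacle is the bookkeeping around the non-parabolic type-B generators $z_{s,t}$. Because $s$ and $t$ are joined by a label $>2$, the pair $\{s,t\}$ is non-commuting and cannot be separated across a direct-product decomposition, so these elements are exactly what can obstruct a splitting into two parabolics; the argument must extract from Godelle's classification, via the FC-condition, that $t$ is forced to be label-$2$ adjacent to all of $S\setminus\{s\}$ (and that attachments of a common vertex are mutually non-adjacent). This is what both makes the component $S_0$ a clean direct factor in the non-abelian case and isolates the non-parabolic factor $L$ in the free-abelian case. A secondary technical point is the identification of $L$ as an \emph{Artin} group on a proper subgraph, for which one reuses essentially verbatim the presentation manipulation carried out in the proof of Lemma \ref{tec_Godelle}.
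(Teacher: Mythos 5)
Your proposal runs on the same toolkit as the paper --- the identity (\ref{eq:AF}) plus Godelle's ribbon generators as specialized in Lemma \ref{tec_Godelle} --- but it organizes the case analysis differently, and in the case where $A_S$ is \emph{not} free abelian your argument genuinely diverges from the paper's. There, the paper iteratively deletes vertices $v\in\lk_\Gamma(S)$ joined with label $>2$ to a non-central vertex of $S$, using the last part of Lemma \ref{tec_Godelle} (the description of $C_G(a)\cap C_G(b)$ for a non-commuting pair $a,b$) to check that each deletion leaves the normalizer unchanged, and only then splits off $A_{S\setminus Z}$. You instead read alternative (ii) directly off the global list of ribbon generators: the supports of all type-A and type-B generators are joined with label $2$ to every vertex of the non-commuting component $S_0$ (for type B this is because the cone vertex $s$ is isolated in the non-commuting graph, hence $s\notin S_0$, and both $s$ and $t$ commute with $S\setminus\{s\}\supseteq S_0$), so $S_0$ is a full component of the non-commuting graph of the support $U$ and $N_G(A_S)\leq A_{S_0}\times A_{U\setminus S_0}$. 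This part is correct and avoids the paper's iteration.

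The free abelian case is where your write-up has two genuine problems. First, a type-B ribbon $z_{s,t}=\Delta_{\{s,t\}}\Delta_{\{s\}}^{-1}=(ts)^k s^{-1}$ does \emph{not} lie in $\ker\pi$: the retraction kills $t$ but fixes $s$, so $\pi(z_{s,t})=s^{k-1}\neq 1$ for $k>1$; your justification (``$\pi$ kills every vertex outside $S$'') ignores the non-zero exponent sum of $z_{s,t}$ in $s$. This is fixable: since $A_S\leq C_G(A_S)$, you may renormalize to $s^{1-k}(ts)^k s^{-1}$, exactly as the paper does in equation (\ref{eq:defz}), without changing the generated group, and then your retraction argument giving the internal decomposition $N_G(A_S)=L\times A_S$ with $L=C_G(A_S)\cap\ker\pi$ goes through. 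Second, and more seriously, identifying $L$ as an Artin group on a proper subgraph is not ``essentially verbatim'' Lemma \ref{tec_Godelle}: that lemma carries out the Tietze analysis only for $S$ a single vertex, whereas for $|S|\geq 2$ one must handle several cone vertices at once and the interactions between ribbons $z_{s,t}$ and $z_{s',t'}$ attached to different vertices of $S$. This is plausible but it is unproved work, not a citation. The paper sidesteps it entirely: when $A_S$ is free abelian it observes $N_G(A_S)=A_S\,C_G(A_S)=C_G(A_S)\leq C_G(a)$ for a single $a\in V(S)$, applies Lemma \ref{tec_Godelle} to that one vertex, and thus embeds $N_G(A_S)$ into $\langle a\rangle\ltimes L$ and hence into $A_S\times L$. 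Replacing your final identification step by this one-vertex reduction closes the gap; as written, your proof of case (iii) is incomplete.
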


\begin{proof} 

We have that $C_G(a)\leq A_{\st_\Gamma(a)}$ (see Lemma \ref{tec_Godelle}), so
$$\bigcap_{a\in V(S)}C_G(a)\leq\bigcap_{a\in V(S)}A_{\st_\Gamma(a)}\leq A_{S\cup\lk_\Gamma(S)},$$
and, therefore,  from (\ref{eq:AF}) we get that
$$N_{G}(A_S)\leq  A_{S\cup\lk_\Gamma(S)}.$$
We may then assume that $\lk_{\Gamma}(S)$ is non-empty.

Consider first the case when $S=\{a\}$ has only one element. Then, as $N_G(A_S)=C_G(a)$ (see (\ref{eq:AF})), using Lemma \ref{tec_Godelle} we have (iii).

Now,  assume that $A_S$ is free abelian. Then, for any $a\in V(S)$, 
$$N_G(A_S)=A_SC_G(A_S)=C_G(A_S)\leq C_G(a),$$
where the first equality is  (\ref{eq:AF}).
Using the previous case we get again (iii).

In conclusion, from now on we deal with the case when $A_S$ is not free abelian. Suppose that there is some vertex $v$ in $\lk_\Gamma(S)$ and $a\in V(S)$ such that the edge in $\Gamma$ between $v$ and $a$ has label $2k$ for $k>1$.

Suppose that there is some $a\neq b\in V(S)$ with $[a,b]\neq 1$. Note that $a$ and $b$ cannot be linked in $\Gamma$ because, if they were, as they do not commute, the triangle $a,b,v$ would have at least two labels greater than 2, which is forbidden for even Artin groups of FC-type. We claim that, in this case, the vertex $v$ can be removed from $\Gamma$ without changing the normalizer, i.e., that if we set $G_1$ to be $A_{\Gamma\setminus\{v\}}$, then \[N_G(A_S)=N_{G_1}(A_S).\]
For this, it is enough to show that  $C_G(A_S)=C_{G_1}(A_S)$ (recall (\ref{eq:AF})). 
Consider the subgraph induced by the three vertices $a$, $b$, $v$. Lemma \ref{tec_Godelle} implies  that $C_G(a)\cap C_G(b)=C_{G_1}(a)\cap C_{G_1}(b)$. Therefore, 
\[
C_G(A_S)=\bigcap_{u\in V(S)}C_G(u)\cap C_G(a)\cap C_G(b)=\bigcap_{u\in V(S)}C_G(u)\cap C_{G_1}(a)\cap C_{G_1}(b)=\]\[
=\bigcap_{u\in V(S)}C_G(u)\cap G_1=\bigcap_{u\in V(S)}C_{G_1}(u)=C_{G_1}(A_S).
\]

This implies that we can successively remove from $\Gamma$ vertices of $\Gamma\setminus S$ without affecting the normalizer until we get a subgraph $S\subseteq\Gamma_2\subseteq\Gamma$ such that for any vertex $w$ of $\lk_{\Gamma_2}(S)$ and any $a\in V(S)$ so that $w$ and $a$ are linked by an edge of label $2k$ with $k>1$, the vertex $a$ commutes with any other vertex in $S$, that is, it is central in $A_S$. Hence, there is some $Z\subseteq S$ (possibly empty) with $A_Z$ central in $A_S$ and such that all the edges between $S$ and $\lk_{\Gamma_2}(S)$ with label greater than 2 have an endpoint in $Z$. As $S$ is not free abelian, $\varnothing \neq S\setminus Z$, and the elements of $A_{S\setminus Z}$ commute with the elements of  $A_{\lk_{\Gamma_2}(S)}$. Therefore, for $G_2=A_{\Gamma_2}$ we have that
$$N_G(A_S)=N_{G_2}(A_S)\leq A_{S\setminus Z\cup Z\cup\lk_{\Gamma_2}(S)}=A_{S\setminus Z}\times A_{Z\cup\lk_{\Gamma_2}(S)},$$
so either we get (i) or (ii).

\end{proof}

\begin{theorem}\label{hyperbolic}
    Let $G=A_\Gamma$ be an even Artin group of FC-type and $H\leq A_\Gamma$ a subgroup. Suppose that there is a vertex $w$ in $\Gamma$ such that $H$ is not a subgroup of $A_{\st_\Gamma(w)}^g$ or of $A_{\Gamma\setminus\{w\}}^g$ for any $g\in G$. Consider the decomposition of $A_\Gamma$ of the form
    \[ A_\Gamma=A_{\Gamma\setminus\{w\}}\ast_{A_{\lk_\Gamma(w)}}A_{\st_\Gamma(w)},\]
and let $T$ be the associated Bass-Serre tree. Then there is an element $h\in H$ acting hyperbolically on $T$.
       \end{theorem}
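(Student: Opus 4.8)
The plan is to argue by contradiction, translating the combinatorics of the action of $H$ on $T$ into the parabolic structure of $G$, with the Antol\'in--Foniqi result that arbitrary intersections of parabolic subgroups are parabolic as the main engine. First I would record the dictionary furnished by Bass--Serre theory: the vertices of $T$ are the cosets of $A_{\Gamma\setminus\{w\}}$ and of $A_{\st_\Gamma(w)}$, the stabilizer of such a vertex is the corresponding conjugate of the vertex group, and the stabilizer of an edge is a conjugate of $A_{\lk_\Gamma(w)}$. Thus the hypothesis that $H$ is not contained in any conjugate of $A_{\st_\Gamma(w)}$ nor of $A_{\Gamma\setminus\{w\}}$ says \emph{exactly} that $H$ fixes no vertex of $T$. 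Since an element of $G$ is elliptic on $T$ (fixes a point) if and only if it is conjugate into a vertex group, producing a hyperbolic $h\in H$ is the same as showing that not every element of $H$ is elliptic.

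Assume for contradiction that every element of $H$ is elliptic. For elliptic $g,h$ the product $gh$ is elliptic precisely when $\Fix(g)\cap\Fix(h)\neq\varnothing$ (otherwise $gh$ translates along the bridge between the two fixed subtrees), so the fixed subtrees $\{\Fix(h)\}_{h\in H}$ pairwise intersect. By the standard lemma on groups of elliptic isometries of a tree (Serre/Tits), $H$ then either fixes a vertex or fixes an end of $T$. The first alternative contradicts the hypothesis at once, so the entire difficulty is to rule out the second, which is the step I expect to be the main obstacle.

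Suppose then that $H$ fixes an end $\xi$, represented by a ray $(v_0,v_1,\dots)$ with edges $e_m=[v_m,v_{m+1}]$. Each $h\in H$ is elliptic and fixes $\xi$, hence fixes the unique ray from any of its fixed points to $\xi$ and so fixes a terminal subray; consequently $H\leq P_\xi:=\bigcup_n Q_n$, where $Q_n:=\bigcap_{m\geq n}\Stab_G(e_m)$ is the pointwise fixator of the ray based at $v_n$. Each $Q_n$ is an intersection of conjugates of the parabolic $A_{\lk_\Gamma(w)}$, so by Antol\'in--Foniqi $Q_n$ is a parabolic subgroup of $G$, and $Q_0\subseteq Q_1\subseteq\cdots$ is an ascending chain of parabolics. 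Here I would invoke the rank of a parabolic subgroup (the number of vertices of a standard parabolic to which it is conjugate): an inclusion of parabolics cannot decrease the rank, and, since a full-rank parabolic subgroup of a parabolic $A_Y$ must be all of $A_Y$, a \emph{proper} inclusion strictly increases it. As the rank is bounded by $|V(\Gamma)|$, the chain stabilizes, say $Q_n=Q_{n_0}$ for all $n\geq n_0$; but then $P_\xi=Q_{n_0}$ fixes the vertex $v_{n_0}$, so $H\leq P_\xi$ fixes a vertex, contradicting the hypothesis and completing the proof.

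I expect the delicate points to be concentrated in this last step: first, confirming that the \emph{infinite} intersections $Q_n$ are genuinely parabolic, which is exactly where the even FC-type hypothesis enters through Antol\'in--Foniqi; and second, justifying that a parabolic cannot be properly contained in a parabolic of equal rank, so that the ascending chain of fixators is forced to stabilize. The normalizer and centralizer computations of Lemma \ref{tec_Godelle} and Proposition \ref{EvenFC_normalizer} are the natural tools for controlling these edge-group intersections, and, together with the weak malnormality of parabolics, for feeding the hyperbolic element produced here into the acylindrical hyperbolicity statement of Corollary \ref{acylindrically hyperbolic}.
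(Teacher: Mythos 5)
Your proof is correct, and it reaches the conclusion by a genuinely different route than the paper, although both arguments ultimately run on the same two Antol\'in--Foniqi inputs: that arbitrary intersections of parabolic subgroups of an even FC-type Artin group are parabolic, and that inclusions of parabolics are rank-monotone with nested conjugates of the same standard parabolic being equal (their Lemmas 3.2 and 3.7). The paper follows the Minasyan--Osin template: it builds the minimal parabolic closure $P_{c_\Gamma}(X)$ (Claim 1), shows that a \emph{finite} subset $X'\subseteq H$ realizes $P_{c_\Gamma}(H)$ via a rank-maximality argument (Claim 2), and then applies the finitely generated case of Tits' fixed-point theorem to $H'=\langle X'\rangle$ to force $P_{c_\Gamma}(H)$ inside a vertex stabilizer. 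You instead invoke the full Tits dichotomy for groups of elliptic isometries (fix a vertex or fix an end) and kill the end case directly: the fixators $Q_n=\bigcap_{m\geq n}\Stab_G(e_m)$ of terminal subrays form an ascending chain of parabolics, which must stabilize because proper inclusions of parabolics strictly increase rank and rank is bounded by $|V(\Gamma)|$, so an end-fixing $H$ would in fact fix the vertex $v_{n_0}$. The trade-off is that you need the stronger, not-necessarily-finitely-generated form of the Tits dichotomy and the observation that an elliptic isometry fixing an end fixes a terminal subray pointwise, whereas the paper only needs the finitely generated fixed-point theorem but must set up the parabolic-closure machinery (which, in their development, also dovetails with the subsequent Minasyan--Osin-style WPD argument in Theorem \ref{WPD}); your chain-stabilization step and their Claim 2 are really the same Noetherian principle for parabolics appearing in two different guises. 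One small point to make explicit if you write this up: each $\Stab_G(e_m)$ is a conjugate of $A_{\lk_\Gamma(w)}$, hence parabolic, so the parabolicity of the infinite intersections $Q_n$ is exactly \cite[Corollary 5.3]{AntolinFoniqi}, the same citation the paper uses for pointwise stabilizers of axes in Theorem \ref{WPD}.
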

\begin{proof}
We follow the steps taken by Minasyan-Osin in \cite[Lemma 6.10]{MinasyanOsin} where they show the analogous result for graph products of groups.

If $X\subseteq G$, we define $P_{c_\Gamma}(X)$ to be the unique minimal parabolic subgroup containing $X$. We first show that such a subgroup is well-defined.

\begin{claim}\label{Claim1}
For $X\subseteq G$, there exists a unique minimal parabolic subgroup of $G$ containing $X$.
\end{claim}
\begin{proof}[Proof of Claim \ref{Claim1}]
Since $G$ is parabolic, the collection of parabolic subgroups containing $X$ is non-empty. Taking into account that the intersection of parabolic subgroups if parabolic (see \cite[Theorem 1.1]{AntolinFoniqi}), we can take the intersection of parabolic subgroups of $G$ containing $X$.


\end{proof}

The next step is to prove a nice property of these parabolic subgroups.

\begin{claim}\label{Claim2}
Let $X \subseteq G$. Then there is a finite subset $X' \subseteq X$ such that $P_{c_\Gamma}(X)= P_{c_\Gamma}(X')$.   
\end{claim}
\begin{proof}[Proof of Claim \ref{Claim2}]
By Claim \ref{Claim1}, for every finite subset $Y \subseteq X$, there exist $S(Y) \subseteq V$ and $f(Y)\in G$ such that
\[ P_{c_\Gamma}(Y)= f(Y) A_{S(Y)} f(Y)^{-1},\]
where  we abuse notation and we denote by $S(Y)$ the subgraph of $\Gamma$ induced by $S(Y)$. Since $\Gamma$ is finite, the function, that assigns to each finite subset $Y\subseteq X$ the integer value $|S(Y)|$, attains its maximum on some finite subset $X' \subseteq X$. Hence, to prove the result it is enough to show that $X\subseteq P_{c_\Gamma}(X')$.

For any $x\in X$, we can find $T\subseteq V$ and $g\in G$ such that $P_{c_\Gamma}(X' \cup \{x\})= g A_T g^{-1}$. By the choice of $X'$, $|T| \leq |S|$ and $f A_S f^{-1} \leq g A_T g^{-1}$, where $S \coloneqq S(X')$ and $f \coloneqq f(X')$. By \cite[Lemma 3.2]{AntolinFoniqi}, $S\subseteq T$, which implies that $S= T$. Finally, the inclusion $f A_T f^{-1} \subseteq g A_T g^{-1}$ together with \cite[Lemma 3.7]{AntolinFoniqi} yield \[f A_S f^{-1}= f A_T f^{-1}= g A_T g^{-1},\] allowing to conclude that $x\in P_{c_\Gamma}(X' \cup \{x\})= P_{c_\Gamma}(X')$ for all $x\in X$.
\end{proof}
Coming back to the main result, suppose by contradiction that $H$ contains no hyperbolic elements, so every element of $H$ is elliptic.

By Claim \ref{Claim2}, there is a finite subset $X'\subseteq H$ such that
 \[ P_{c_\Gamma}(X')= P_{c_\Gamma}(H).\]
The subgroup $H' \coloneqq \langle X' \rangle \leq H$ is finitely generated and every element of $H'$ fixes a vertex of $T$, so $H'$ fixes some vertex of $T$ (see, for instance, \cite[(3.4)]{Tits2}). Hence, there is $f\in G$ such that $H' \leq f A_D f^{-1}$, where $D$ is either $\st_{\Gamma}(w)$ or $\Gamma \setminus \{w\}$. It follows that
\[ H \leq P_{c_\Gamma}(H)= P_{c_\Gamma}(X')= P_{c_\Gamma}(H') \leq f A_D f^{-1},\]
which contradicts our assumption.
\end{proof}

We end up the section by proving the existence of WPD elements in certain subgroups of even Artin groups of FC-type and, as a consequence, showing that these subgroups are acylindrically hyperbolic.

\begin{theorem}\label{WPD}
Let $A_\Gamma$ be an even Artin group of FC-type and $H\leq A_\Gamma$ a subgroup which is not virtually cyclic. Suppose that there is a vertex $w$ in $\Gamma$ such that $H$ is not a subgroup of $A_{\st_\Gamma(w)}^g$ or of $A_{\Gamma\setminus\{w\}}^g$ for any $g\in G$. Consider the decomposition of $A_\Gamma$ of the form
    \[ A_\Gamma=A_{\Gamma\setminus\{w\}}\ast_{A_{\lk_\Gamma(w)}}A_{\st_\Gamma(w)}.\]
    Then either
\begin{itemize}
\item[(i)] there are Artin groups $A_{T_1}$, $A_{T_2}$ with $0<|V(T_1)|,|V(T_2)|<|V(\Gamma)|$ such that $H\leq A_{T_1}\times A_{T_2}$; or
\item[(ii)] $H$ is acylindrically hyperbolic.
\end{itemize}
\end{theorem}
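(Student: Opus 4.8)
The plan is to combine Theorem~\ref{hyperbolic} with the WPD machinery for amalgamated free products. By Theorem~\ref{hyperbolic}, the hypotheses guarantee that $H$ contains an element $h$ acting hyperbolically on the Bass-Serre tree $T$ associated to the splitting
\[ A_\Gamma=A_{\Gamma\setminus\{w\}}\ast_{A_{\lk_\Gamma(w)}}A_{\st_\Gamma(w)}.\]
The overall strategy is to show that, unless the normalizer structure forces $H$ into a direct product as in (i), such a hyperbolic element is in fact a WPD element for the action of $A_\Gamma$ (hence of $H$) on $T$; by the standard criterion (see \cite{MinasyanOsin}), a group acting on a tree with a WPD hyperbolic element, which is not virtually cyclic, is acylindrically hyperbolic. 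So the first step is to invoke Theorem~\ref{hyperbolic} to produce $h\in H$ hyperbolic on $T$, and then to analyze when $h$ (or a suitable power) is WPD.

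The key technical input is the control over normalizers and centralizers from Proposition~\ref{EvenFC_normalizer} together with the parabolic-intersection property of Antol\'in--Foniqi. Recall that for an action on a tree, an element acting hyperbolically with axis $\ell$ is WPD precisely when the ``parabolic-like'' stabilizers along its axis are sufficiently small: concretely, one wants that for the edge stabilizers $A_{\lk_\Gamma(w)}^g$ crossed by the axis, the relevant intersections of conjugates are finite (trivial, since these groups are torsion-free). The plan is to use the fact that edge groups are free abelian standard parabolic subgroups, and that by Proposition~\ref{EvenFC_normalizer} the normalizer of any parabolic $A_S$ either equals $A_S$ or embeds in a direct product of proper Artin subgroups. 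I would first check whether $H$ itself lands inside such a direct product $A_{T_1}\times A_{T_2}$ with $0<|V(T_i)|<|V(\Gamma)|$: if so, we are in case (i). Otherwise, I would argue that the weak malnormality of the edge groups (which follows, as in the discussion preceding the section, from irreducibility plus the parabolic intersection property proved in \cite{AntolinFoniqi}) yields the WPD condition for $h$.

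More precisely, the acylindricity/WPD verification reduces to the following: for the hyperbolic $h$ with axis in $T$, and for a fixed long enough segment of the axis, only finitely many elements of $G$ move both endpoints a bounded amount. Since point stabilizers are conjugates of $A_{\st_\Gamma(w)}$ or $A_{\Gamma\setminus\{w\}}$ and edge stabilizers are conjugates of the free abelian $A_{\lk_\Gamma(w)}$, the crucial finiteness amounts to showing that some intersection $A_{\lk_\Gamma(w)}\cap A_{\lk_\Gamma(w)}^{h^n}$ is trivial for large $n$, i.e. to a weak malnormality statement for the edge group. This is exactly where Proposition~\ref{EvenFC_normalizer} and the fact that intersections of parabolics are parabolic enter: if the edge group were \emph{not} weakly malnormal in the appropriate sense, one could trace the obstruction back through the normalizer description to conclude that $H$ is contained in a product of proper parabolic or Artin subgroups, landing in case (i).

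I expect the main obstacle to be the dichotomy itself, namely cleanly separating the case where the edge-stabilizer intersections fail to be finite (forcing the product decomposition of (i)) from the generic case where they are finite (yielding WPD and hence (ii)). Establishing the WPD property requires not merely that a single intersection of conjugates of the edge group be trivial, but a uniform finiteness over the family of elements nearly fixing a long axis segment; translating Proposition~\ref{EvenFC_normalizer}'s three alternatives into this uniform statement, and ensuring that the ``bad'' alternative is precisely the one producing a direct-product containment for $H$ rather than merely for a single element's centralizer, is the delicate part. The routine verifications (torsion-freeness from the $K(\pi,1)$-Conjecture, passing from a WPD element in $G$ to acylindrical hyperbolicity of the non-virtually-cyclic subgroup $H$ via \cite{MinasyanOsin}) I would treat as standard and cite accordingly.
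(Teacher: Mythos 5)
Your outline assembles the correct ingredients (Theorem \ref{hyperbolic}, Proposition \ref{EvenFC_normalizer}, the Antol\'in--Foniqi parabolic intersection results, and the Minasyan--Osin WPD criteria), but it has a genuine gap at exactly the point you yourself flag as ``delicate'': you never explain how a failure of the WPD/malnormality condition for a hyperbolic element forces \emph{all of $H$} --- rather than merely a normalizer or centralizer of something attached to that one element --- into a direct product $A_{T_1}\times A_{T_2}$. Proposition \ref{EvenFC_normalizer} controls the normalizer of a parabolic subgroup, but to apply it to $H$ you must first exhibit a nontrivial parabolic subgroup that is normalized by $H$ itself, and nothing in your argument produces one; ``tracing the obstruction back through the normalizer description'' is precisely the step that needs an idea.

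The paper's proof supplies that idea in two steps missing from your proposal. First, for \emph{any} hyperbolic $g$ with axis $\mathcal{L}$, the pointwise stabilizer $\mathrm{PStab}_{A_\Gamma}(\mathcal{L})$ is an intersection of conjugates of $A_{\lk_\Gamma(w)}$, hence parabolic by \cite[Corollary 5.3]{AntolinFoniqi}; the same corollary gives that finitely many edges suffice, so $\mathrm{PStab}_{A_\Gamma}(\mathcal{L})=\mathrm{PStab}_{A_\Gamma}(\{x,y\})$ for two vertices $x,y$ of $\mathcal{L}$. Second --- the key point --- one chooses the hyperbolic $h\in H$ so that the \emph{parabolic dimension} of $\mathrm{PStab}_{A_\Gamma}(\mathcal{L})$ is minimal; following \cite[Proposition 6.13]{MinasyanOsin} (with the needed lattice properties of parabolics supplied by \cite[Lemmas 3.1, 3.2, 3.4]{AntolinFoniqi}), this minimality forces $\mathrm{PStab}_{A_\Gamma}(\mathcal{L})$ to be normalized by $H$. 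The dichotomy is then clean: if this stabilizer is nontrivial, it is some $fA_Sf^{-1}$ with $S\neq\varnothing$ normalized by $H$, and since $H\not\leq fA_Sf^{-1}$ (otherwise $H$ would fix a point of $T$), alternative (i) of Proposition \ref{EvenFC_normalizer} is excluded and alternatives (ii)/(iii) give conclusion (i); if it is trivial, then $\mathrm{PStab}_{A_\Gamma}(\{x,y\})=1$, so \cite[Corollary 4.3]{MinasyanOsin} makes $h$ a WPD element, the edge group is weakly malnormal, and \cite[Corollary 2.2]{MinasyanOsin} gives acylindrical hyperbolicity of the non-virtually-cyclic $H$, i.e.\ conclusion (ii). Note also that your intended reduction to ``$A_{\lk_\Gamma(w)}\cap A_{\lk_\Gamma(w)}^{h^n}=1$ for large $n$'' is not quite the right target: the WPD verification in the paper goes through triviality of the pointwise stabilizer of a pair of axis vertices, not through an intersection of edge-group conjugates along powers of $h$.
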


\begin{proof}
The proof is similar to the one of \cite[Corollary 6.19]{MinasyanOsin}. Hence, we refer the reader to such paper for a deeper understanding of the main concepts and results.

Let $T$ be the  Bass-Serre tree associated to the decomposition in the statement. We claim that under the statement conditions, there is a hyperbolic element $g\in H$ such that if $\mathcal{L}$ is the axis of $g$ in $T$ and $\text{PStab}_{A_\Gamma}(\mathcal{L})$ is the pointwise stabilizer of $\mathcal{L}$, then $\text{PStab}_{A_\Gamma}(\mathcal{L})$ is a parabolic  subgroup of $A_\Gamma$ normalized by $H$.

Let us first show that for any hyperbolic isometry $g$, if $\mathcal{L}$ is the axis of $g$ in $T$, then $\text{PStab}_{A_\Gamma}(\mathcal{L})$ is parabolic. Indeed, by definition, $\text{PStab}_{A_\Gamma}(\mathcal{L})$ equals
\[ \text{PStab}_{A_\Gamma}(\mathcal{L})= \bigcap_{e\in \mathcal{E}} \text{PStab}_{A_\Gamma}(e),\]
where $\mathcal{E}$ is the set of all edges of $\mathcal{L}$. Since each $\text{PStab}_{A_\Gamma}(e)$ is a conjugate of $A_{\lk_\Gamma(w)}$, by \cite[Corollary 5.3]{AntolinFoniqi} we get that $\text{PStab}_{A_\Gamma}(\mathcal{L})$ is a parabolic subgroup of $A_\Gamma$. In fact, the same corollary shows that
\[ \text{PStab}_{A_\Gamma}(\mathcal{L})= \bigcap_{e\in \mathcal{E}^\prime} \text{PStab}_{A_\Gamma}(e)= \text{PStab}_{A_\Gamma}(\mathcal{E}^\prime),\]
for some finite collection $\mathcal{E}^\prime$ of edges of $\mathcal{L}$. As any finite collection of edges of $\mathcal{L}$ is contained in a finite subsegment of $T$, we can find two vertices $x$ and $y$ in $\mathcal{L}$ such that
\[ \text{PStab}_{A_\Gamma}(\mathcal{L})= \text{PStab}_{A_\Gamma}([x,y])= \text{PStab}_{A_\Gamma}(\{x,y\}).\]
As a summary, we have just proved that if $g$ is a hyperbolic element and $\mathcal{L}$ is its axis, then $\text{PStab}_{A_\Gamma}(\mathcal{L})$ is parabolic, and moreover, that there are two vertices $x$ and $y$ such that $\text{PStab}_{A_\Gamma}(\mathcal{L})=\text{PStab}_{A_\Gamma}(\{x,y\}).$

Let us secondly prove that we can find a hyperbolic element $g\in H$ such that $\text{PStab}_{A_\Gamma}(\mathcal{L})$ is normalized by $H$ in $A_\Gamma$. Following \cite[Definition 6.2]{MinasyanOsin}, if $P= f A_S f^{-1}$ is a parabolic subgroup of $A_\Gamma$ for some subgraph $S$ of $\Gamma$ and $f\in A_\Gamma$, then the \emph{parabolic dimension} $\text{pdim}_\Gamma(P)$ of $P$ is the number of vertices of $S$. Then \cite[Lemma 3.2]{AntolinFoniqi} ensures that $\text{pdim}_\Gamma(P)$ is well-defined, as the result shows that if $g_1 A_ S g_1^{-1} \subseteq g_2 A_T g_2^{-1}$ for some subgraphs $S, T$ of $\Gamma$ and $g_1, g_2 \in A_\Gamma$, then $S \subseteq T$.

We claim that if $g$ is a hyperbolic element such that $\text{pdim}_\Gamma(\text{PStab}_{A_\Gamma}(\mathcal{L}))$ is minimal, then $\text{PStab}_{A_\Gamma}(\mathcal{L})$ is normalized by $H$ in $A_\Gamma$. This is exactly shown as \cite[Proposition 6.13]{MinasyanOsin}. The properties used here are proved in \cite{AntolinFoniqi}, and they correspond to \cite[Lemma 3.1, Lemma 3.2, Lemma 3.4]{AntolinFoniqi}.

By Theorem \ref{hyperbolic} there are indeed hyperbolic elements in $H$ for the given action on the Bass-Serre tree $T$. If we consider $h$ an element in $H$ which is hyperbolic and such that $\text{pdim}_\Gamma(\text{PStab}_{A_\Gamma}(\mathcal{L}))$ is minimal, where $\mathcal{L}$ is the axis of $h$, then we have shown that $\text{PStab}_{A_\Gamma}(\mathcal{L})$ is normalized by $H$ and parabolic in $A_\Gamma$, so there is a subgraph $S$ of $\Gamma$ and $f\in A_\Gamma$ such that $fA_S f^{-1}$ is normalized by $H$. But then $H\leq N_G(f A_S f^{-1})$ and as $H$ is not a subgroup of $f A_S f^{-1}$ (otherwise $H$ would fix a point in $T$) by Proposition \ref{EvenFC_normalizer}, we get (i). Otherwise, we conclude that $\text{PStab}_{A_\Gamma}(\mathcal{L})$ is trivial. We have shown that there are two vertices $x$ and $y$ such that $\text{PStab}_{A_\Gamma}(\mathcal{L})=\text{PStab}_{A_\Gamma}(\{x,y\})$, so \cite[Corollary 4.3]{MinasyanOsin} implies that $h$ satisfies the WPD condition. Then, $A_{\lk_\Gamma(w)}$ is weakly malnormal, so from \cite[Corollary 2.2]{MinasyanOsin}, it follows that the action of $H$ on the associated Bass-Serre tree is acylindrically hyperbolic (since we are assuming that $H$ is not virtually cyclic).
\end{proof}

\begin{corollary}\label{acylindrically hyperbolic}
 Let $A_\Gamma$ be an even Artin group of FC-type and $H\leq A_\Gamma$ a subgroup which is not virtually cyclic. Then either
\begin{itemize}
\item[(i)] there are $1\neq N_1,N_2\normal H$ with $N_1\cap N_2=1$ (so $N_1\times N_2\leq H$); or
\item[(ii)] $H$ is acylindrically hyperbolic.    
\end{itemize}
\end{corollary}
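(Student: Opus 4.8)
The plan is to prove the corollary by induction on $|V(\Gamma)|$, using Theorem \ref{WPD} as the engine and converting its two alternatives into the two alternatives of the corollary. Throughout I use that the conclusion is an isomorphism invariant of $H$ and that every (induced) subgraph of an even FC-type graph is again even of FC-type, so I may replace $H$ by any isomorphic copy — in particular by a conjugate $g^{-1}Hg$ — and recurse on standard parabolic subgroups with fewer vertices. When $|V(\Gamma)|\le 1$ we have $A_\Gamma\in\{1,\Z\}$, which has no non–virtually-cyclic subgroup, so the statement is vacuous and provides the base case. For the inductive step I first dispose of the \emph{reducible} case: suppose $A_\Gamma=A_{\Gamma_1}\times A_{\Gamma_2}$ is a nontrivial direct product of standard parabolic subgroups (i.e. $\Gamma=\Gamma_1\sqcup\Gamma_2$ with all cross-edges labelled $2$). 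Writing $\pi_i$ for the two projections, set $N_i:=\ker(\pi_{3-i}|_H)=H\cap A_{\Gamma_i}\normal H$; since $A_{\Gamma_1}\cap A_{\Gamma_2}=A_\varnothing=1$ we get $N_1\cap N_2=1$. If both $N_i$ are nontrivial we are in case (i) (they commute, so $N_1\times N_2\le H$); if, say, $N_1=1$, then $\pi_2|_H$ is injective and $H$ embeds into the even FC-type group $A_{\Gamma_2}$ on the proper subgraph $\Gamma_2$, and the induction hypothesis applies to $\pi_2(H)\cong H$.

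From now on $A_\Gamma$ is irreducible. If $\Gamma$ is \emph{complete}, the FC-condition forces the edges of label $>2$ to form a matching (two such edges at a common vertex would give a triangle with two labels $>2$), so $A_\Gamma$ is a direct product of dihedral Artin groups and a free abelian group; irreducibility then leaves only $A_\Gamma\cong\Z$ (vacuous) or a single even dihedral Artin group $D$ of type $2k$ with $k\ge 2$. For such a $D$ I will use that its centre $Z=Z(D)$ is infinite cyclic, that $F:=[D,D]$ is free (Example \ref{ex:dihedral}) and normal with $F\cap Z=1$, and that $D/Z$ is non-elementary and virtually free (writing $c=ab$ one has $D\cong\la a,c\mid [a,c^{k}]=1\ra$ and $D/Z\cong \Z\ast\Z/k$). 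Given $H\le D$ not virtually cyclic, put $N_1=H\cap Z$ and $N_2=H\cap F$, both normal in $H$ with $N_1\cap N_2\le Z\cap F=1$. If $N_1=1$, then $H$ embeds in the virtually free group $D/Z$, hence is non-elementary virtually free, hence acylindrically hyperbolic — this is (ii). If $N_1\neq 1$ and $N_2\neq 1$ we are in case (i). Finally, if $N_2=1$ then $H$ embeds in $D/[D,D]\cong\Z^2$, so $H\cong\Z^2$ and (i) holds.

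It remains to treat the irreducible \emph{non-complete} case, where the plan is to feed Theorem \ref{WPD}. As $\Gamma$ is not complete, there is a vertex $w$ with $\st_\Gamma(w)\neq\Gamma$, so both $\st_\Gamma(w)$ and $\Gamma\setminus\{w\}$ are proper subgraphs and we have the splitting $A_\Gamma=A_{\Gamma\setminus\{w\}}\ast_{A_{\lk_\Gamma(w)}}A_{\st_\Gamma(w)}$. If $H\le A_{\st_\Gamma(w)}^g$ or $H\le A_{\Gamma\setminus\{w\}}^g$ for some $g\in G$, then $g^{-1}Hg$ lies in a standard parabolic on fewer vertices and we recurse. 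Otherwise the hypothesis of Theorem \ref{WPD} holds for this $w$, and the theorem yields either that $H$ is acylindrically hyperbolic — which is (ii) — or that $H\le A_{T_1}\times A_{T_2}$ with $0<|V(T_i)|<|V(\Gamma)|$. In the latter case I rerun the projection argument of the reducible step with $N_i=H\cap A_{T_i}$, obtaining either case (i) or an embedding of $H$ into a single $A_{T_i}$ on fewer vertices, which closes the induction.

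I expect the dihedral base case to be the main obstacle: every other configuration shrinks the graph via a direct-product or star–link splitting, but a single even dihedral factor admits no such decomposition and must be analysed by hand, which is exactly where one needs the non-formal input that $D/Z(D)$ is non-elementary virtually free together with $[D,D]\cap Z(D)=1$. A secondary point to verify is that the conclusion ``$H\le A_{T_1}\times A_{T_2}$'' of Theorem \ref{WPD} is a genuine statement about an internal direct product inside $A_\Gamma$ (so that $H\cap A_{T_1}$ and $H\cap A_{T_2}$ are normal in $H$ and meet trivially); granting this, the projection dichotomy that turns it into case (i) or a smaller instance is routine.
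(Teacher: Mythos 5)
Your proposal is correct and follows essentially the same route as the paper's own proof: induction on $|V(\Gamma)|$, the dichotomy $N_i=H\cap A_{T_i}$ (giving case (i) or an embedding into a smaller even FC-type factor) for every direct-product configuration, a by-hand analysis of the single even dihedral group via its center and a free normal subgroup, and Theorem \ref{hyperbolic}/Theorem \ref{WPD} in the non-complete case. The only cosmetic differences are in the dihedral step, where you take $F=[D,D]$ and invoke virtual freeness of subgroups of $\Z\ast\Z_k$, while the paper takes $F$ to be the normal closure of $b$ (so that $C\times F$ has finite index) and cites \cite[Theorem 2.12]{MinasyanOsin} for the case $H\cap Z(D)=1$.
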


\begin{proof}

Let us argue by induction on $|V(\Gamma)|$.  Assume first that $H$ is a subgroup of $A_{T_1}\times A_{T_2}$ for $T_1,T_2$ graphs such that $|V(T_1)|,|V(T_2)|$ are smaller than $|V(\Gamma)|$. Then if $H\cap A_{T_1}=1$, we see that $H$ is isomorphic to a subgroup of $A_{T_2}$ and we may use the inductive hypothesis. The same happens if $H\cap A_{T_2}=1$. Hence, we may assume that $H\cap A_{T_1}, H\cap A_{T_2}$ are non-trivial. Denoting $N_j$ to be $H\cap A_{T_j}$ for $j\in \{1,2\}$, we obtain (i) in this case.

If $H$ is a subgroup of $A_\Delta$ such that $\Delta$ is a complete graph, then either we can embed $H$ in a decomposition as before or $A_\Delta$ is dihedral (recall that $\Delta$ is even of FC-type). If $G=A_\Delta$ is dihedral associated to the edge $\{a,b\}$ with label $2k$, the defining relator is $(ab)^k=(ba)^k$. Setting $c$ to be $ab$, this relator can be transformed into $c^kb=bc^k$. Then, if we denote by $C$ the subgroup generated by $c^k$, we see that $C$ is central in $G$ (in fact, it is the center of $G$). Let $F$ be the normal subgroup of $G$ normally generated by $b$. It is known (see, for example, \cite[Lemma 2.5]{AntolinFoniqi2}) that $F$ is free and $C\times F$ has finite index in $G$. Moreover, using the presentation of $G$ in terms of $c$ and $b$ we have that $G \slash C=\Z\ast \Z_k$. Now, if $H\cap C=1$, then $H$ is isomorphic to a subgroup of $G\slash C$ and by \cite[Theorem 2.12]{MinasyanOsin} we conclude that $H$ is acylindrically hyperbolic. Thus, we assume that $H\cap C\neq 1$. But as $H$ is not virtually cyclic, $H\cap F\neq 1$ and we are in case (i).

Therefore, we may assume that the graph $\Gamma$ is not complete, so that there is some $w$ of $\Gamma$ such that $\st_\Gamma(w)$ is a proper subgraph. Then, if $H$ is a subgroup of 
 $A_{\st_\Gamma(w)}^g$ or of $A_{\Gamma\setminus\{w\}}^g$ for some $g\in A_\Gamma$, $H^{g^{-1}}$ is a subgroup of the corresponding (proper) standard parabolic subgroup and we may argue by induction. Otherwise, we may apply Theorem \ref{WPD} together with the first paragraph to conclude the proof.
\end{proof}

Recall that $N_1$ and $N_2$ above must be infinite because even Artin groups of FC-type are torsion-free. Acylindrically hyperbolic groups cannot contain infinite normal subgroups as in (1) of Corollary \ref{acylindrically hyperbolic} (see \cite[Theorem 3.7 (b), (c)]{MinasyanOsin}), so this result implies that for subgroups of even Artin groups of FC-type this obstruction characterizes acylindrical hyperbolicity.



\end{document}